\newtheorem{theor}{Theorem}[section]
\newtheorem{cor}[theor]{Corollary}
\newtheorem{lemma}[theor]{Lemma}
\newtheorem{prop}[theor]{Proposition}
\newtheorem{rmk}[theor]{Remark}
\newtheorem{thmintrod}{Theorem}
\DeclareMathOperator{\arccosh}{arccosh}
\DeclareMathOperator{\arcsinh}{arcsinh}
\DeclareMathOperator{\arctanh}{arctanh}
\DeclareMathOperator{\area}{area}
\DeclareMathOperator{\dist}{dist}
\title[The Maximum Injectivity Radius of Hyperbolic Orbifolds]{The Maximum Injectivity Radius\\ of Hyperbolic Orbifolds}
\date{\today}
\author[F. Fanoni]{Federica Fanoni}
\thanks{Research supported by Swiss National Science Foundation grant number PP00P2\textunderscore 128557}
\address{Department of Mathematics, University of Fribourg, Switzerland}
\email{\tt federica.fanoni@unifr.ch}
\keywords{Injectivity radius, hyperbolic orbifolds, cone-surfaces, Riemann surfaces, Hurwitz surfaces}
\subjclass[2010]{Primary: 57R18. Secondary: 32G15, 53C22.} 
\begin{document}
\begin{abstract}
For two-dimensional orientable hyperbolic orbifolds, we show that the radius of a maximal embedded disk is greater or equal to an explicit constant $\rho_T$, with equality if and only if the orbifold is a sphere with three cone points of order 2, 3 and 7.
\end{abstract}
\maketitle

\section{Introduction}

The main  goal of this article is to prove the following result:
\begin{thmintrod}
For every hyperbolic orbifold surface, the maximum injectivity radius is greater or equal to a universal constant $\rho_T$, with equality if and only if the orbifold is a sphere with three cone points of order $2$, $3$ and $7$.
\end{thmintrod}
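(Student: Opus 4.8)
The plan is to translate the statement into a packing problem for the deck group and then study the configuration that pins a maximal disk. Write the orbifold as $\mathbb{H}^2/\Gamma$ for an orientation-preserving Fuchsian group $\Gamma$, and for a point $p$ fix a lift $\tilde p$. The disk $B(p,r)$ embeds as a genuine hyperbolic disk exactly when $g\,B(\tilde p,r)\cap B(\tilde p,r)=\varnothing$ for every $g\neq\id$, i.e. when $2r\le\min_{g\neq\id}\dist(\tilde p,g\tilde p)$; consequently the maximum injectivity radius equals $R=\tfrac12\sup_{\tilde p}\min_{g\neq\id}\dist(\tilde p,g\tilde p)$, the packing radius of the best-placed $\Gamma$-orbit. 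I would first check that the supremum is attained at some $p^\ast$ (the displacement function is continuous and proper on the thick part), and record that an elliptic $g$ of order $m$ fixing a cone point $x$ enters through $\dist(\tilde p,g\tilde p)=2\,\arcsinh(\sinh\dist(\tilde p,x)\,\sin(\pi/m))$, so that sharper cone points force nearby orbit points to lie closer.

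The key structural fact is that the maximal disk $D=B(p^\ast,R)$ is \emph{trapped}. If the orbit points realising the minimal displacement $2R$ all lay in a closed half-plane through $\tilde p$, one could push $\tilde p$ in the opposite direction and strictly increase $\min_{g\neq\id}\dist(\tilde p,g\tilde p)$, contradicting maximality; hence there are $n\ge 3$ nearest orbit points $g_1\tilde p,\dots,g_n\tilde p$, all at distance $2R$, whose directions positively span (are contained in no closed half-plane). Each consecutive pair subtends an angle $\alpha_i$ at $\tilde p$ with $\sum_i\alpha_i=2\pi$. Since any two distinct orbit points are at distance $\ge 2R$, the hyperbolic law of cosines in the triangle $\tilde p\,g_i\tilde p\,g_{i+1}\tilde p$ gives the uniform bound $\alpha_i\ge\arccos\frac{\cosh 2R}{\cosh 2R+1}$, while the cone points near $p^\ast$ organise their nearest orbit points into regular clusters of size $m_i$, sharpening this accounting.

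The heart of the proof is to extract from this angular budget a lower bound on $R$ that is sharp for the $(2,3,7)$ orbifold. In the roomy regime — large area, or few and mild cone points — the trapped configuration already forces $R$ well above $\rho_T$, so the problem reduces to the finitely many small signatures: spheres with three cone points (the triangle orbifolds) and the handful of remaining small-area cases. For three cone points I would apply Gauss--Bonnet to the triangle on the lifts of $x_1,x_2,x_3$ that contains $\tilde p$, whose angles at the cone vertices are governed by the orders $m_i$, and minimise the resulting disk over admissible orders $\tfrac1{m_1}+\tfrac1{m_2}+\tfrac1{m_3}<1$, which singles out $(2,3,7)$. I expect this optimisation to be the main obstacle: one must prove the relevant monotonicity — that enlarging any order, adding a cone point, adding genus, or replacing a cone contact by a hyperbolic geodesic-loop contact all leave strictly more angular room, hence a strictly larger maximal disk — and then dispatch the competing triangle groups $(2,3,8),(2,4,5),(3,3,4),\dots$ by explicit comparison.

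Finally I would set $\rho_T$ equal to the value of $R$ produced by the extremal pattern — equivalently the maximum injectivity radius of the $(2,3,7)$ orbifold, attained at the point equidistant from its three cone points — obtaining $R\ge\rho_T$ in every case. The equality discussion is then forced by the rigidity of the optimisation: equality requires every nearest obstruction to be a cone point, exactly three of them ($n=3$), with orders attaining the minimum $2,3,7$ and the surrounding triangle realising the Gauss--Bonnet extremal, which pins down the orbifold uniquely as the sphere with cone points of orders $2$, $3$ and $7$.
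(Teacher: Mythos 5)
Your variational setup is sound and in fact mirrors the paper's key structural step: the claim that at a maximum the nearest orbit points cannot all lie in a closed half-plane through $\tilde p$ is precisely the paper's characterization (Proposition \ref{3tg}) that a local maximum of the injectivity radius is a point whose three loops all have length $2r_p$, and your displacement formula $\sinh(\dist(\tilde p,g\tilde p)/2)=\sinh(\dist(\tilde p,x))\sin(\pi/m)$ correctly encodes the cone points, including the order-two case where the maximal disk simply touches the cone point. The list of competing triples you would have to dispatch is also the right one.

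However, as written the proposal has two genuine gaps. First, the reduction of all non-triangular orbifolds to ``finitely many small signatures'' via a ``roomy regime'' count is not a proof, and the hard cases are exactly the ones such a count misses: the sphere with cone orders $(2,2,2,3)$ has area $\pi/3$, too small for any soft area or angular-budget estimate to give $r(S)>\rho_T$ --- the paper must cut $S_{\frac{\pi}{2},\frac{\pi}{3},\frac{\pi}{7}}$ open along a side, isometrically embed the result into that orbifold, and then perturb the center; the torus with one singular point likewise requires a separate right-angled-pentagon argument. You would need arguments of comparable sharpness for each low-complexity signature, and nothing in your angular accounting supplies them. Second, for the triangle orbifolds themselves, Gauss--Bonnet applied to the triangle spanned by lifts of $x_1,x_2,x_3$ cannot yield the sharp constant: the extremal disk is not inscribed in that triangle (its boundary is pinched by loops winding around the cone points), and $\rho_T$ arises as the root of a transcendental system --- the relations $\cos\alpha_i=\cosh(r)\sin(\tilde\alpha_i/2)$ at the cone points together with $\cos\vartheta=1-\tfrac{1}{2\cosh^2(r)}$ and $\sum_i\tilde\alpha_i+3\vartheta=2\pi$ --- rather than from an area identity. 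The step you flag as ``the main obstacle'' (monotonicity in the orders, then explicit comparison of $(2,3,7)$, $(2,4,5)$, $(3,3,4)$) is indeed where the real content lies; it is carried out in the paper via the function $F_{\alpha,\beta,\gamma}$ and Lemmas \ref{angles} and \ref{tripleleq}, and without it your proposal establishes neither the lower bound nor the rigidity in the equality case.
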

The original interest in this result comes from the study of the automorphism group of a surface (possibly with cusps), and in particular from the following problem: consider a hyperbolic surface $S$ and the group $\mathrm{Aut}^+(S)$ of orientation preserving isometries of $S$. Take a point $p$ on $S$, not fixed by any $\varphi\in\mathrm{Aut}^+(S)$. It is possible to choose a small radius $r$, depending on $S$ and $p$, such that the open disks $\left\{B_r(\varphi(p))\right\}_{\varphi\in\mathrm{Aut}^+(S)}$ are pairwise disjoint, but can we avoid the dependence on the surface and the point? More precisely, is there a universal constant $\rho$ such that for every hyperbolic surface there is a point $p$ with pairwise disjoint embedded disks $\left\{B_{\rho}(\varphi(p))\right\}_{\varphi\in\mathrm{Aut}^+(S)}$? And which is the maximum $\rho$ with this property? As a consequence of Theorem $1$, we can answer both questions.
\begin{thmintrod}
For every hyperbolic surface $S$ there exists a point $p$ such that $$\left\{B_{\rho_T}(\varphi(p))\right\}_{\varphi\in\mathrm{Aut}^+(S)}$$ are embedded and pairwise disjoint. Moreover, $\rho_T$ is the biggest possible radius if and only if $S$ is a Hurwitz surface.
\end{thmintrod}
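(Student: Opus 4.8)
The plan is to deduce this statement from Theorem 1 by passing to the quotient orbifold. Let $S$ be a hyperbolic surface, write $G=\mathrm{Aut}^+(S)$ (a finite group, since $S$ has finite area and negative Euler characteristic), and let $\mathcal{O}=S/G$ be the quotient hyperbolic orbifold, with projection $\pi\colon S\to\mathcal{O}$. The core of the argument is a dictionary between the two pictures: for a radius $r>0$ there is a point $p\in S$ for which the disks $\{B_r(\varphi(p))\}_{\varphi\in G}$ are embedded and pairwise disjoint if and only if $\mathcal{O}$ contains an embedded disk of radius $r$ centred at a regular (manifold) point.

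I would prove this dictionary directly. If $\bar p\in\mathcal{O}$ is a regular point carrying an embedded disk $D=B_r(\bar p)$, then choosing any lift $p\in\pi^{-1}(\bar p)$ its $G$-orbit is free, $\pi^{-1}(D)=\bigsqcup_{\varphi\in G}B_r(\varphi(p))$, and each piece maps isometrically onto $D$; hence the disks are embedded and pairwise disjoint. Conversely, if $\{B_r(\varphi(p))\}_{\varphi}$ are embedded and pairwise disjoint, then $p$ has trivial stabiliser (otherwise two of them would coincide), so $\bar p=\pi(p)$ is regular; and $\pi$ is injective on $B_r(p)$, since any identification $\varphi(x)=y$ with $x,y\in B_r(p)$ and $\varphi\neq\id$ would place a point of $B_r(\varphi(p))$ inside $B_r(p)$, contradicting disjointness. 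Thus $\pi$ restricts to an isometric embedding of $B_r(p)$, giving the desired disk in $\mathcal{O}$. Under this dictionary the largest radius realizable on $S$ equals the radius of a maximal embedded disk of $\mathcal{O}$, that is, the maximum injectivity radius of $\mathcal{O}$; one notes that such a maximal disk is automatically centred at a regular point, its centre being a smooth point of the disk.

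With the dictionary in hand the theorem follows from Theorem 1. Since the maximum injectivity radius of $\mathcal{O}$ is at least $\rho_T$, taking $r=\rho_T$ produces the required point $p$, which proves the first assertion. For the second assertion, the biggest admissible radius on $S$ is exactly the maximum injectivity radius of $\mathcal{O}$, which by Theorem 1 equals $\rho_T$ if and only if $\mathcal{O}$ is the sphere with cone points of orders $2,3,7$. It then remains to identify this condition with $S$ being a Hurwitz surface, which is the standard characterization: from $\area(S)=|G|\cdot\area(\mathcal{O})$ and the fact that the $(2,3,7)$ orbifold uniquely minimizes the area among hyperbolic orbifolds (equivalently maximizes $|G|$, realizing the Hurwitz bound $|G|=84(g-1)$), one gets that $S/\mathrm{Aut}^+(S)$ is the $(2,3,7)$ orbifold precisely when $S$ is Hurwitz; note also that, $\mathcal{O}$ being compact in this case, $S$ is automatically a closed surface.

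The step that needs the most care is the attainment in the equality case: to conclude that $\rho_T$, and not merely radii approaching it, is realized on a Hurwitz surface, one must know that the maximal embedded disk of the $(2,3,7)$ orbifold is genuinely centred at a regular point, so that it lifts to honest pairwise disjoint disks of radius exactly $\rho_T$; this should be read off from the location of the extremal disk determined in the proof of Theorem 1. The remaining ingredients—finiteness of $\mathrm{Aut}^+(S)$, the orbifold covering structure of $\pi\colon S\to\mathcal{O}$, and density of regular points—are routine, so the entire weight of the argument rests on Theorem 1.
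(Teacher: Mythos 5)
Your proposal is correct and follows essentially the same route as the paper: the paper's proof also passes to the quotient orbifold $S/\mathrm{Aut}^+(S)$, observes that the balls $\{B_\rho(\varphi(p))\}$ are embedded and pairwise disjoint exactly when $\rho\leq r_{\pi(p)}$, maximizes over the quotient using Theorem 1, and invokes the standard fact that $S$ is Hurwitz if and only if $S/\mathrm{Aut}^+(S)=S_{\frac{\pi}{2},\frac{\pi}{3},\frac{\pi}{7}}$. Your write-up simply spells out the lifting dictionary and the regularity of the extremal centre in more detail than the paper does.
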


Various results about the maximum injectivity radius are known for orientable surfaces with or without boundary. In particular, lower bounds have been given by Yamada \cite{yamada} for surfaces without boundary and by Parlier \cite{parlier} for surfaces with boundary. DeBlois \cite{deblois} has given upper bounds, depending on the signature, for surfaces without boundary, generalizing the result of Bavard \cite{bavard} for closed surfaces. For non-orientable surfaces, the only known result is an upper bound in the closed case, again by Bavard \cite{bavard}.

The structure of the article is the following: after some preliminary results, in section \ref{triangularsurfaces} we study the maximum injectivity radius of triangular surfaces (cone-surfaces of genus zero with three singular points) and we obtain the constant $\rho_T$. In section \ref{otherorbifolds} we show that $\rho_T$ is a lower bound for the maximum injectivity radius of any orbifold of signature different from $(0,3)$ and the final section contains the proofs of theorems $1$ and $2$.\\
In the Appendix we show how to use the techniques developed in section \ref{triangularsurfaces}, to deduce a simple proof of Yamada's theorem described above.

\subsection*{Acknowledgements}
The author would like to thank her advisor Hugo Parlier for introducing her to the subject, for many helpful discussions and advice and for carefully reading the drafts of the paper.

\section{Notation and preliminary results}\label{preliminaries}
A {\it (hyperbolic)  surface} is a smooth connected orientable surface with a complete and finite area hyperbolic metric. A {\it cone-surface} is a two-dimensional connected manifold that can be triangulated by finitely many hyperbolic triangles; if it has boundary components, we ask that they be geodesic. A point where the cone-surface isn't smooth is called a {\it cone point}. Note that we will distinguish between cone points and cusps.

For every cone point $p$, there is a collection of triangles having $p$ as a vertex; we call {\it total angle} at $p$ the sum of angles at $p$ of those triangles. We will consider cone-surfaces such that every cone point of a cone-surface has total angle at most $\pi$, and we will call them {\it admissible cone-surfaces}. A cone-surface is of {\it signature} $(g,n,b)$ if it has genus $g$, $n$ singular points (cone points or cusps) and $b$ boundary components. If $b=0$, we will simply write $(g,n)$.

If a cone point has total angle $\frac{2\pi}{k}$, we say that it is {\it of order $k$}. We define an {\it orbifold} to be an admissible cone-surface without boundary components such that every cone point is of order $k$, for a positive integer $k$ (depending on the point). We denote by $\mathcal{O}$ the set of all orbifolds and by $\mathcal{S}$ the set of all surfaces.

We will use $d_M(\ , \ )$ for the distance on a metric space $M$ and $B_r(p)$  for the set of points at distance at most $r$ from a point $p\in M$: $$B_r(p)=\{q\in M : d_M(p,q)<r\}.$$ Sometimes, if the metric space we are considering is clear from the context, we will simply use $d(\ , \ )$ instead of $d_M(\ , \ )$. Given a curve $\gamma$ on $M$, we denote its length by $l(\gamma)$.

Let $S$ be an admissible cone-surface; we can consider the map
\begin{align*}
r:S &\rightarrow \mathbb{R}\\
p &\mapsto r_p,
\end{align*}
where $$r_p=\max\{r\geq 0\mbox{ }| B_r(p)\mbox{ is isometric to an open disk of radius $r$ in }\mathbb{H}^2\}$$
is the injectivity radius at the point $p$. We will need the following result.

\begin{prop}
The map $r$ admits a maximum.
\end{prop}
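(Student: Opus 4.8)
The plan is to show that $r$ is continuous and then to confine its maximum to a compact subset of $S$, where the extreme value theorem guarantees that the supremum is attained. The only obstruction to $S$ being compact is the presence of cusps, so the heart of the argument is to prove that $r$ decays to $0$ as one approaches a cusp; once this is known, a maximizing sequence cannot escape into the ends and must accumulate on a compact set.

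First I would prove that $r$ is $1$-Lipschitz, hence continuous. Fix $p,q\in S$ and set $\varepsilon=d(p,q)$; we may assume $\varepsilon<r_p$, as otherwise $r_q\geq 0\geq r_p-\varepsilon$ trivially. By definition $B_{r_p}(p)$ is isometric to an open hyperbolic disk of radius $r_p$ via some isometry $\phi$, under which $q$ corresponds to a point at distance $\varepsilon$ from the center. Any $x$ with $d(q,x)<r_p-\varepsilon$ satisfies $d(p,x)<r_p$, so $x\in B_{r_p}(p)$, and every point of the $S$-geodesic from $q$ to $x$ likewise lies in $B_{r_p}(p)$; hence this geodesic corresponds under $\phi$ to a path inside the disk of the same length. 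This identifies $B_{r_p-\varepsilon}(q)$ with the concentric sub-disk of radius $r_p-\varepsilon$, so it is isometric to an open disk in $\mathbb{H}^2$ and $r_q\geq r_p-\varepsilon$. Exchanging $p$ and $q$ gives $|r_p-r_q|\leq d(p,q)$. Since $r$ vanishes at every cone point and every boundary point while being strictly positive at smooth interior points, the supremum $M=\sup_{p\in S}r_p$ is a \emph{positive} real number.

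Next I would treat non-compactness. Because $S$ is triangulated by finitely many hyperbolic triangles, it has finite area and fails to be compact only through finitely many cusps. Near a cusp, $S$ is modeled on the quotient of a horoball by a parabolic isometry, and the horocyclic geodesic loop based at a point deep in the cusp has length tending to $0$; half of this length bounds $r$ from above, so $r\to 0$ along each end. Thus for every cusp there is a horoball neighborhood on which $r<M/2$. Let $U$ be the union of these neighborhoods and $K=S\setminus U$; then $K$ is compact. Since $r<M/2<M$ on $U$, we have $\sup_{K}r=M$, and as $K$ is compact and $r$ is continuous this value is attained at some point of $K$, which is therefore a global maximum of $r$.

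The main obstacle is the cusp estimate: one must identify every end of $S$ as a cusp and verify that the injectivity radius genuinely tends to $0$ there, which rests on the shrinking of the horocyclic loops in a parabolic cusp neighborhood. The Lipschitz estimate is comparatively routine; the only point deserving care is the verification that the concentric sub-ball $B_{r_p-\varepsilon}(q)$ of the embedded disk $B_{r_p}(p)$ is again an embedded hyperbolic disk, which follows from the triangle inequality applied along the connecting geodesics, as indicated above.
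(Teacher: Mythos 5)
Your proposal is correct and follows essentially the same route as the paper: the Lipschitz estimate $|r_p-r_q|\leq d(p,q)$ obtained by embedding $B_{r_p-\varepsilon}(q)$ inside $B_{r_p}(p)$ gives continuity, and the observation that $r$ decays in the cusps reduces the problem to a compact set obtained by removing horoball neighborhoods, where the extreme value theorem applies. The paper's version is merely terser; your elaboration of the cusp estimate via shrinking horocyclic loops and of the sub-disk embedding fills in the same steps the paper leaves implicit.
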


\begin{proof}
Let $\varepsilon>0$ be small. If $d(p,q)<\varepsilon$, then $r_q\geq r_p-\varepsilon$, because $B_{r_p-\varepsilon}(q)$ is embedded in $B_{r_p}(p)$; conversely $r_p\geq r_q-\varepsilon$. So $\left| r_p-r_q\right|\leq \varepsilon$ and the map $r$ is continuous.

If there is no cusp, $S$ is compact and $r$ has a maximum. If there are any cusps, note that the injectivity radius becomes smaller while getting closer to the cusp. So $$\sup_{p\in S}{r_p}=\sup_{p\in C}{r_p}$$ for some compact set $C$ in $S$ obtained by taking away suitable open horoballs. Again, we get that $r$ has a maximum.
\end{proof}

We define the {\it maximum injectivity radius} of an admissible cone-surface $S$ as
$$r(S):=\max_{p\in S}r_p.$$
The constant $\rho_T$ we want to find is the infimum of $r(O)$ for $O\in\mathcal{O}$. As every quotient surface is an orbifold and every orbifold is the quotient of a surface by a group of automorphisms, we have
$$\inf_{O\in\mathcal{O}}r(O)= \inf_{S\in\mathcal{S}}r(S/\mathrm{Aut}^+(S)).$$
We will prove that the two infima are realized by the orbifold of genus zero with three cone points of order $2$, $3$ and $7$, which can be obtained as quotient $S/\mathrm{Aut}^+(S)$ for any Hurwitz surface $S$.\\

We now want to obtain some results about geodesic representatives of simple closed curves and pants decompositions, in the context of admissible cone-surfaces. We will use a result by Tan, Wong and Zhang \cite{tanwongzhang}, while other results will be similar to some obtained by Dryden and Parlier in \cite{drydenparlier}, but the presence of cone points of order two will require some extra work.

Following \cite{drydenparlier}, we define a {\it (generalized) pair of pants} as for compact hyperbolic surfaces, with the difference that we allow the boundary geodesics to be replaced by cusps or cone points with total angle at most $\pi$. A pair of pants will be called:
\begin{itemize}
\item a {\it Y-piece} if it has three geodesics as boundary,
\item a {\it V-piece} if it has two geodesics and a singular point as boundary,
\item a {\it joker's hat} if it has two singular points and a geodesic as boundary,
\item a {\it triangular surface} if it has three singular points as boundary.
\end{itemize}

An {\it admissible geodesic of the first type} is a simple closed geodesic. An {\it admissible geodesic of the second type } is a curve obtained by following back and forth a simple geodesic path between two cone points of order two. These admissible geodesics will have here the same role played by simple closed geodesics in the context of hyperbolic surfaces: they will be the geodesic representatives of simple closed curves and they will form pants decompositions.

Let $\Sigma$ be the set of all cone points on an admissible cone-surface $S$. We say that a closed curve $\gamma$ on $S\setminus \Sigma$ is homotopic to a point $p\in S$ if $\gamma$ and $p$ are are freely homotopic on $S\setminus \Sigma\cup \{p\}$; $\gamma$ is {\it non-trivial} if it is not homotopic to any point (including the singular ones) or boundary component. Given an admissible geodesic of the second type $\delta$, let $\delta_\varepsilon$ be the boundary of an $\varepsilon$-neighborhood of $\delta$ (with $\varepsilon$ small enough so that $\delta_\varepsilon$ is a simple closed curve, contractible in $S\setminus \Sigma\cup\delta$). A curve $\gamma$ on $S\setminus \Sigma$ is homotopic to $\delta$ if it is freely homotopic on $S\setminus \Sigma$ to $\delta_\varepsilon$.

\begin{figure}[h]
\begin{center}
\includegraphics{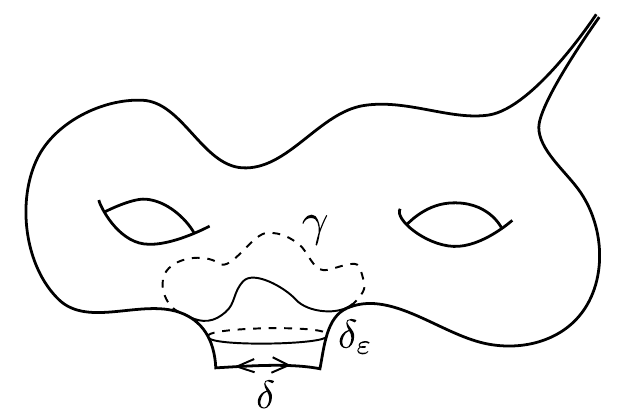}
\caption{A curve $\gamma$ homotopic to an admissible geodesic of the second type $\delta$}
\end{center}
\end{figure}

Let $\alpha$ and $\beta$ be  two transverse simple closed curve on $S\setminus\Sigma$ The {\it (geometric) intersection number} of the corresponding free homotopy classes is
$$i([\alpha],[\beta])=\min\{|\alpha'\cap\beta'| : \alpha'\in[\alpha], \beta'\in[\beta]\}.$$
We say that $\alpha$ and $\beta$  form a {\it bigon} if there is an embedded disk on $S\setminus \Sigma$ whose boundary is the union of an arc of $\alpha$ and an arc of $\beta$ intersecting in exactly two points.

In \cite{tanwongzhang}, the following is proven:
\begin{prop}\label{geodrepresentatives}
Let $S$ be an admissible cone-surface, $\Sigma$ the set of all cone points on $S$ and $\gamma$ a non-trivial simple closed curve on $S\setminus \Sigma$. Then it is homotopic to a unique admissible geodesic $\mathcal{G}(\gamma)$.
\end{prop}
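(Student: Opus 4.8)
The plan is to realize $\mathcal{G}(\gamma)$ as a length-minimizer in the free homotopy class of $\gamma$ and then to analyze how it can interact with the cone points. Concretely, set $L=\inf\{l(\gamma'):\gamma'\simeq\gamma \text{ in } S\setminus\Sigma\}$, with lengths measured in the ambient cone-surface metric so that limiting curves are allowed to touch $\Sigma$. Since $\gamma$ is non-trivial, $L>0$: a sequence of representatives with lengths tending to $0$ would, by compactness, converge to a point, forcing $\gamma$ to be homotopic to a point (possibly a singular one), which is excluded. First I would take a length-minimizing sequence $\gamma_n$. Because $\gamma$ is neither trivial nor peripheral, the $\gamma_n$ may be taken to lie in the compact set $C$ obtained by truncating the cusps (entering a cusp only lengthens a non-peripheral curve), so Arzel\`a--Ascoli produces a limit $\gamma_\infty$ of length $L$ lying in the closure of the class. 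This gives existence; the whole difficulty is to understand $\gamma_\infty$ near $\Sigma$.

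Away from $\Sigma$ the curve $\gamma_\infty$ is locally length-minimizing in a smooth metric of curvature $-1$, hence a smooth geodesic arc by the first variation formula. The key local point is at a cone point $p$ of total angle $\theta_p\le\pi$. If $\gamma_\infty$ passed through $p$ with two distinct directions, the two complementary angles $\alpha,\beta$ at $p$ would satisfy $\alpha+\beta=\theta_p\le\pi$, so both would be strictly less than $\pi$; but a locally length-minimizing curve cannot have a corner on a side of angle $<\pi$, since rounding that corner yields a strictly shorter curve in the same class. I would conclude that $\gamma_\infty$ never crosses a cone point transversally, so either it avoids $\Sigma$ entirely, in which case it is a simple closed geodesic and $\mathcal{G}(\gamma)$ is an admissible geodesic of the first type, or the minimizing sequence pinches onto cone points.

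The pinched case is where the order-two hypothesis enters. The only way a limit of \emph{simple} curves can touch a cone point without crossing it is to fold back there; and for $\gamma_\infty$ to stay length-minimizing the incoming and outgoing arcs at a fold must coincide, by the same angle argument, so $\gamma_\infty$ is a single geodesic arc traversed back and forth. Since a fold in the smooth part (angle $2\pi$) is never minimizing, both endpoints of the arc must be cone points, and the back-and-forth turn is a genuine local geodesic precisely when the angle there equals $\pi$, i.e.\ when the point has order two; an angle $<\pi$ would leave a true corner that could be smoothed. The cleanest justification of the order-two condition is the universal-cover picture: such a class is represented by the product of the two order-two elliptic elements fixing the endpoints $p$ and $q$, which is a hyperbolic translation of length $2\,d(p,q)$ whose axis passes through both fixed points and descends to the doubled segment. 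This identifies $\mathcal{G}(\gamma)$ as an admissible geodesic of the second type and explains why both endpoints must be of order two.

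For uniqueness I would pass to the universal cover $\mathbb{H}^2$ (the orbifold cover in the orbifold case, the developing image of $S\setminus\Sigma$ in general), where the class corresponds to a single conjugacy class of a hyperbolic isometry $g$ whose translation length equals $L$. Its axis in $\mathbb{H}^2$ is unique, so any two geodesic representatives project from the same axis and hence coincide; in the second type the axis through the two elliptic fixed points is again unique. The main obstacle throughout is that, unlike a smooth hyperbolic surface, $S$ carries \emph{positive} curvature concentrated at the cone points (all angles are $\le\pi<2\pi$), so $S$ is not $\mathrm{CAT}(0)$ and neither existence nor uniqueness follows from standard nonpositive-curvature convexity. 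Every delicate step must instead be controlled by the local angle bound $\theta_p\le\pi$, which is exactly what forbids geodesics from crossing cone points and what makes the second-type folds legitimate.
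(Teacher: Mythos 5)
This proposition is not proved in the paper at all: it is quoted verbatim from Tan--Wong--Zhang \cite{tanwongzhang}, so there is no in-house proof to compare against. Your sketch is the standard direct-method argument that one would expect to find in that reference (length-minimizing sequence, Arzel\`a--Ascoli on the truncated surface, local angle analysis at cone points showing that a minimizer either misses $\Sigma$ or folds back exactly at points of total angle $\pi$), and its architecture is sound; in particular the observation that a transversal passage through a cone point of total angle $\theta_p\leq\pi$ splits it into two angles each $<\pi$ and can therefore be shortcut, while a fold is length-minimizing precisely when $\theta_p=\pi$, is the correct local input.

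Two steps are genuinely incomplete as written. First, you never address why the limit $\gamma_\infty$ is \emph{simple}: a limit of simple closed curves of bounded length can a priori acquire tangential self-intersections or be an iterate of a shorter closed geodesic, and ruling this out requires the usual no-bigon/primitivity argument (a self-intersection of a length-minimizer in a primitive simple class produces a shortcut or a bigon). Since ``admissible geodesic'' is by definition either a simple closed geodesic or a doubled simple arc, this is not cosmetic. Second, your uniqueness argument via the axis of a hyperbolic element in $\mathbb{H}^2$ only literally applies when $S$ is an orbifold, i.e.\ when $S$ is a global quotient of $\mathbb{H}^2$; the proposition is stated for arbitrary admissible cone-surfaces, whose cone angles need not be of the form $2\pi/k$ and which admit no such uniformization, and the holonomy of the hyperbolic cone structure does not by itself control lengths on $S$. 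The clean substitute is the annulus argument: two distinct admissible geodesics in the same class on $S\setminus\Sigma$ would cobound an annulus containing no cone points, and Gauss--Bonnet forces that annulus to have zero area, a contradiction (with a small extra case analysis at the order-two endpoints for geodesics of the second type). With those two repairs your proof is a correct, self-contained replacement for the citation.
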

We also have:
\begin{prop}\label{geodrepresentatives2}
Let $S$ be an admissible cone-surface and $\Sigma$ the set of all cone points on $S$. Given $\alpha$, $\beta$ two transverse non-trivial simple closed curves on $S\setminus \Sigma$, then either $\mathcal{G}(\alpha)=\mathcal{G}(\beta)$ or $|\mathcal{G}(\alpha)\cap\mathcal{G}(\beta)|\leq |\alpha\cap\beta |$.
\end{prop}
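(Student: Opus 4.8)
The plan is to reduce the statement to the \emph{bigon criterion} for simple closed curves on the surface $S\setminus\Sigma$. Since $\mathcal{G}(\alpha)$ is freely homotopic to $\alpha$ and $\mathcal{G}(\beta)$ to $\beta$ (Proposition \ref{geodrepresentatives}), the definition of geometric intersection number already gives $i([\alpha],[\beta])\leq|\alpha\cap\beta|$. Hence, assuming $\mathcal{G}(\alpha)\neq\mathcal{G}(\beta)$, it is enough to show that the geodesic representatives are in \emph{minimal position}, i.e. that $|\mathcal{G}(\alpha)\cap\mathcal{G}(\beta)|=i([\alpha],[\beta])$; combining the two relations then yields $|\mathcal{G}(\alpha)\cap\mathcal{G}(\beta)|\leq|\alpha\cap\beta|$. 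Whenever one of the two admissible geodesics is of the second type, I would replace it by the simple closed curve $\delta_\varepsilon$ bounding a thin neighborhood of it (which lies in $S\setminus\Sigma$ and represents the same class), so that both competitors are genuine simple closed curves and the bigon criterion applies: two transverse non-trivial simple closed curves are in minimal position if and only if they bound no bigon (the bigon criterion, see \cite{drydenparlier}). One then argues by contradiction, supposing that $\mathcal{G}(\alpha)$ and $\mathcal{G}(\beta)$ bound a bigon $B$, an embedded disk in $S\setminus\Sigma$ whose boundary is an arc $a$ of the first curve and an arc $b$ of the second, meeting at two corners.

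First I would treat the case in which both representatives are of the first type, i.e. genuine simple closed geodesics. Because $B$ is an embedded disk inside $S\setminus\Sigma$, its interior contains no cone point, so the induced metric on $B$ is smooth of constant curvature $-1$, and the two boundary arcs are geodesic, hence have vanishing geodesic curvature. Applying Gauss--Bonnet to $B$, with interior angles $\theta_x,\theta_y\geq 0$ at the two corners, gives $-\area(B)+(\pi-\theta_x)+(\pi-\theta_y)=2\pi$, that is $\area(B)=-\theta_x-\theta_y\leq 0$, contradicting the fact that the embedded disk $B$ has positive area. So no bigon can exist in this case.

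The remaining case, where at least one representative is of the second type, is the main obstacle: such a geodesic passes through cone points of order two, and the curve $\delta_\varepsilon$ replacing it is not geodesic, so Gauss--Bonnet cannot be applied to it directly. This is precisely the extra work caused by order-two cone points. The plan is to \emph{unfold} them by passing to a double cover $\pi\colon\hat S\to S$ branched exactly over the order-two cone points met by $\mathcal{G}(\alpha)$ and $\mathcal{G}(\beta)$. These points come in pairs, since each second-type geodesic joins two of them, so their number is even and such a cover exists (if a shared endpoint makes the count odd, one branches in addition over one further order-two point to restore parity). Over each branch point the cone angle doubles from $\pi$ to $2\pi$, so the branch points become smooth points of $\hat S$, and every second-type geodesic downstairs lifts to a genuine simple closed geodesic upstairs. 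A bigon between $\mathcal{G}(\alpha)$ and $\mathcal{G}(\beta)$ would then lift to a bigon in $\hat S$ bounded by genuine simple closed geodesics, which the first-type argument has already excluded; projecting back shows that no bigon exists downstairs either.

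The genuinely delicate points are all concentrated in this last case, and I expect it to be the hard part: making precise the count $|\mathcal{G}(\alpha)\cap\mathcal{G}(\beta)|$ when a geodesic is of the second type (each transverse crossing of the underlying segment contributes two intersections of the thin loop $\delta_\varepsilon$), checking that the branch locus can always be taken to be even, and verifying that bigons correspond under $\pi$ so that minimal position descends from $\hat S$ to $S$. By contrast, the first-type step is essentially a one-line application of Gauss--Bonnet; the real content of the proof is organizing the order-two case so that it reduces cleanly to the first-type one.
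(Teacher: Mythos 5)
Your reduction to the bigon criterion and the Gauss--Bonnet argument for two first-type geodesics match the paper, which disposes of that case in one line (geodesics bound no bigons, hence are in minimal position). Where you diverge is the case of second-type geodesics, and there your route has a genuine gap. The double cover $\pi\colon\hat S\to S$ branched exactly over a prescribed set $B$ of cone points exists if and only if there is a homomorphism $H_1(S\setminus\Sigma;\mathbb{Z}/2)\to\mathbb{Z}/2$ sending the class of a small loop around each point of $B$ to $1$; since the classes of the loops around \emph{all} points of $\Sigma$ sum to zero, this forces parity constraints that your fix (``branch over one further order-two point'') cannot always satisfy. Concretely, take a torus with exactly three cone points of order two and two second-type geodesics sharing one endpoint: the desired branch locus is all three order-two points, their classes sum to zero in $H_1(S\setminus\Sigma;\mathbb{Z}/2)$, there is no fourth singular point to add, and so no such double cover exists. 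Even when the cover does exist, it may be forced to branch over cone points of order $k\geq 3$ (whose angle merely doubles upstairs), and --- more importantly --- the bigon you must exclude is one bounded by the push-off $\delta_\varepsilon$ and the other curve; its lift is bounded by a \emph{non-geodesic} push-off of $\widehat{\mathcal{G}(\alpha)}$, so your Gauss--Bonnet step does not apply verbatim upstairs, and you are back to relating push-off bigons to geodesic bigons, which is exactly the difficulty you were trying to avoid.

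The paper's argument sidesteps all of this and is worth comparing: it chooses $\alpha'\in[\alpha]$ in a thin neighborhood of the second-type geodesic $\mathcal{G}(\alpha)$ and observes directly that $\alpha'$ and $\mathcal{G}(\beta)$ form no bigon --- any candidate bigon would have to contain one of the order-two cone points, hence is not an embedded disk in $S\setminus\Sigma$ --- so $|\alpha'\cap\mathcal{G}(\beta)|=i([\alpha],[\beta])\leq|\alpha\cap\beta|$, and then uses the fact that each intersection of $\mathcal{G}(\alpha)$ with $\mathcal{G}(\beta)$ accounts for at least two intersections of $\alpha'$ with $\mathcal{G}(\beta)$ to conclude $|\mathcal{G}(\alpha)\cap\mathcal{G}(\beta)|\leq|\alpha'\cap\mathcal{G}(\beta)|$. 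If you want to keep your covering-space picture, you would need either to justify the existence of the cover case by case (allowing branch points away from $\Sigma$, where the cone angle $4\pi$ created upstairs still gives a negative area count in Gauss--Bonnet), or to replace the global cover by a local unfolding along each second-type geodesic; as written, the construction fails on explicit orbifolds, so the argument is incomplete.
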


\begin{proof}
Suppose $\mathcal{G}(\alpha)$ and $\mathcal{G}(\beta)$ are distinct. Since they are geodesics, they do not form any bigons.

If they are both simple closed geodesics, by the bigon criterion (see for instance \cite[prop. 1.7]{farbmargalit}) they intersect minimally and $$|\mathcal{G}(\alpha)\cap\mathcal{G}(\beta)|=i([\alpha],[\beta])\leq |\alpha\cap\beta|.$$

If $\mathcal{G}(\alpha)$ is a closed geodesic of the second type and $\mathcal{G}(\beta)$ is a simple closed geodesics, we can choose a curve $\alpha'\in[\alpha]$ in a small neighborhood of $\mathcal{G}(\alpha)$ such that it doesn't form any bigons with $\mathcal{G}(\beta)$, so $|\alpha'\cap\mathcal{G}(\beta)|=i([\alpha],[\beta])\leq |\alpha\cap\beta|$. Since every intersection of $\mathcal{G}(\alpha)$ and $\mathcal{G}(\beta)$ corresponds to at least two intersections of $\alpha'$ and $\mathcal{G}(\beta)$, we have $|\mathcal{G}(\alpha)\cap\mathcal{G}(\beta)|\leq|\alpha'\cap\mathcal{G}(\beta)|\leq|\alpha\cap\beta|$.

\begin{figure}[h]
\begin{center}
\includegraphics{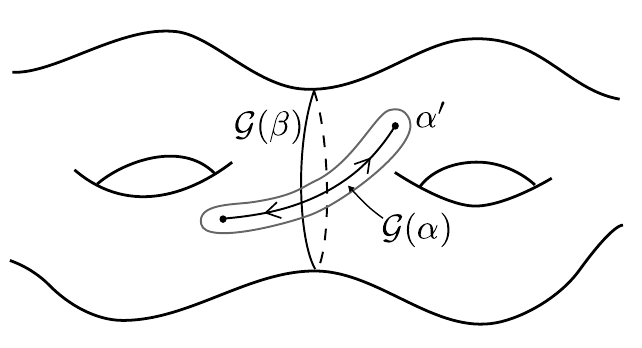}
\caption{$\mathcal{G}(\alpha)$, $\mathcal{G}(\beta)$ and the curve $\alpha'$}
\end{center}
\end{figure}

If both $\mathcal{G}(\alpha)$ and $\mathcal{G}(\beta)$ are closed geodesics of the second type, we consider two curves $\alpha'$ and $\beta'$ in small neighborhoods of the geodesics and we apply a similar argument to the one above.

\end{proof}

A maximal (in the sense of subset inclusion) set $\mathcal{P}$ of pairwise disjoint admissible geodesics is called a {\it pants decomposition}. 

\begin{prop}\label{pantsdecomposition}
Let $S$ be an admissible cone-surface of signature $(g,n,b)$ and $\mathcal{P}$ a pants decomposition of $S$. Then $|\mathcal{P}|=3g-3+n+b$ and $S\setminus \mathcal{P}$ is a set of (open) pairs of pants.
\end{prop}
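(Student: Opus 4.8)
The plan is to argue in two stages: first use maximality of $\mathcal{P}$ to force every complementary region to be a (generalized) pair of pants, and then recover the count $|\mathcal{P}| = 3g-3+n+b$ from an Euler characteristic computation. Throughout I replace each admissible geodesic of the second type $\delta$ by the simple closed curve $\delta_\varepsilon$ encircling its two order-two cone points, so that $\mathcal{P}$ becomes a family of pairwise disjoint, pairwise non-homotopic, non-trivial simple closed curves, and cutting $S$ along them makes sense; the region enclosed by $\delta_\varepsilon$ is a disk with two cone points, which is topologically a three-holed sphere and is exactly a joker's hat.

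For the first stage, suppose some component $R$ of $S$ cut along $\mathcal{P}$ is not a pair of pants. Each such $R$ is itself an admissible cone-surface with geodesic, cusp and cone-point boundary. By the classification of surfaces, a piece that is not a genus-zero surface with three boundary or singular ends contains a simple closed curve $c$ that is essential and non-peripheral in $R$; the configurations one must exclude are the disk, the once-marked disk, the annulus and the one-holed torus, and the first three cannot occur here because the elements of $\mathcal{P}$ are non-trivial and, by the uniqueness in Proposition \ref{geodrepresentatives}, pairwise non-homotopic. Since $c$ lies in the interior of $R$, it is disjoint from every element of $\mathcal{P}$; by Proposition \ref{geodrepresentatives} it has a unique admissible geodesic representative $\mathcal{G}(c)$, and by Proposition \ref{geodrepresentatives2}, applied to $c$ and each boundary geodesic, $\mathcal{G}(c)$ meets each element of $\mathcal{P}$ in at most as many points as $c$ does, namely none. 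As $c$ is non-peripheral, $\mathcal{G}(c)$ is distinct from every element of $\mathcal{P}$, so $\mathcal{P}\cup\{\mathcal{G}(c)\}$ is a strictly larger family of pairwise disjoint admissible geodesics, contradicting maximality. Hence every component of $S\setminus\mathcal{P}$ is a pair of pants.

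For the second stage I count. Treating the $n$ singular points and the $b$ boundary circles as ends, one has $\chi(S)=2-2g-n-b$, and each pair of pants has Euler characteristic $-1$. Cutting along a simple closed curve (each of Euler characteristic $0$) is additive on $\chi$, so if $P$ denotes the number of pieces then $\chi(S)=-P$, giving $P=2g-2+n+b$. Next I count boundary incidences: each piece has exactly three boundary ends, so the total is $3P$; each of the $|\mathcal{P}|$ curves borders two piece-sides and so accounts for $2$, while each cusp, cone point and boundary circle of $S$ accounts for exactly $1$. This yields $3P=2|\mathcal{P}|+n+b$. Substituting $P=2g-2+n+b$ and solving gives $|\mathcal{P}|=3g-3+n+b$, as claimed.

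The main obstacle I expect is the first stage, namely making the topological reduction clean in the presence of the two types of cone points: one must be sure that the curve $c$ produced in $R$ is genuinely non-peripheral (not homotopic to a boundary geodesic, to a cusp, or to a cone-point-encircling $\delta_\varepsilon$), and that $\mathcal{G}(c)$ does not secretly coincide with a curve already in $\mathcal{P}$. Both points again follow from the uniqueness in Proposition \ref{geodrepresentatives}: if $\mathcal{G}(c)$ equalled some element of $\mathcal{P}$, then $c$ would be homotopic to the corresponding boundary of $R$, contradicting non-peripherality. The remaining care is purely bookkeeping in the incidence count, ensuring that the two order-two cone points swallowed inside a joker's hat are each still counted once among the $n$ singular points.
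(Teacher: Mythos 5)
Your argument is correct and follows the same route as the paper, whose proof is a one-line appeal to Propositions \ref{geodrepresentatives} and \ref{geodrepresentatives2} together with the standard count of disjoint non-homotopic simple closed curves on a punctured surface; your two stages (maximality forces pants pieces via those two propositions, then an Euler characteristic and incidence count) are exactly the details behind that appeal. One small slip worth fixing: the one-holed torus is not an exceptional configuration---it does contain an essential non-peripheral simple closed curve---so it is already handled by your main argument rather than being a case left unexcluded.
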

The proof follows from Propositions \ref{geodrepresentatives} and \ref{geodrepresentatives2} and the fact that a topological surface of genus $g$ with $n+b$ punctures has $3g-3+n+b$ pairwise disjoint and non-homotopic simple closed curves.

Note that the number of curves for a pants decomposition is determined by the signature, but the number of pairs of pants is not. For example, we can consider a sphere with four cone points, two of order $2$ and two of order $3$. We can choose two different curves, each forming a pants decomposition; one decomposes the orbifold into two pairs of pants and the other into one, as in the following picture:
\begin{figure}[H]
\begin{center}
\includegraphics{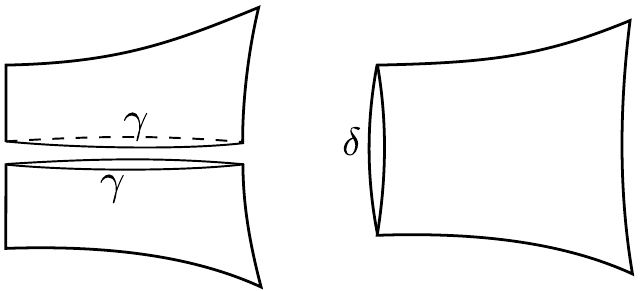}
\caption{Two pants decompositions with different number of pairs of pants}
\end{center}
\end{figure}

\section{Triangular surfaces}\label{triangularsurfaces}
A large portion of the work to obtain our result lies in the analysis of the triangular surfaces case. We will study the maximum injectivity radius of triangular surfaces and we will give an implicit formula for it. We will then prove that the triangular surface with cone points of order $2$, $3$ and $7$ realizes the minimum among all orbifolds of signature $(0,3)$.

Recall that a triangular surface is an admissible cone-surface of signature $(0,3)$. Every triangular surface can be obtained by gluing two hyperbolic triangles of angles $\alpha, \beta$ and $\gamma$, all less or equal to $\frac{\pi}{2}$; we denote the corresponding admissible cone-surface by $S_{\alpha, \beta, \gamma}$. We call $A$, $B$ and $C$ the vertices of the triangles corresponding to $\alpha$, $\beta$ and $\gamma$ respectively and $a$, $b$ and $c$ the opposite sides (or the lengths of the sides). Such a decomposition in triangles in unique, and since every triangle is uniquely determined (up to isometry) by its angles, the moduli space of triangular surfaces can be seen as $$\mathcal{M}=\left.\left\{(\alpha_1,\alpha_2,\alpha_3)\in\left[0,\frac{\pi}{2}\right]^3 :\sum_{i=1}^3\alpha_i<\pi\right\}\right/\mbox{Sym}(3),$$
 where the action of the symmetric group $\mbox{Sym}(3)$ on the set of triples is given by
$$(\sigma,(\alpha_1,\alpha_2,\alpha_3))\mapsto (\alpha_{\sigma(1)},\alpha_{\sigma(2)},\alpha_{\sigma(3)}).$$

\begin{lemma}
If a triangular surface $S$ contains a cusp and two singular points with total angles $2\vartheta_1$ and $2\vartheta_2$, the associated horocycle of length $h(S)$ is embedded in $S$, where 
$$h(S):=\frac{4}{\sqrt{1+\frac{1}{R(0,\vartheta_1,\vartheta_2)}}}$$
and
$$R(0,\vartheta_1,\vartheta_2):=\arctanh\frac{\cos\vartheta_1+\cos\vartheta_2}{2\sqrt{(1+\cos\vartheta_1)(1+\cos\vartheta_2)}}$$
is the radius of the inscribed disk in a triangle with angles $0$, $\vartheta_1$ and $\vartheta_2$.
\end{lemma}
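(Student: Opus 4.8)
The plan is to work with the developing map of $S$ in the upper half-plane, placing the cusp at $\infty$. Realizing $S$ as the double of the triangle $T$ with angles $0,\vartheta_1,\vartheta_2$, I would develop $T$ so that its two sides through the cusp are vertical geodesics and the third side $a$ (the geodesic joining the two cone points) is a Euclidean semicircle, which I normalize to the unit circle $|z|=1$. Placing the cone points $B,C$ on this semicircle, the interior angles $\vartheta_1,\vartheta_2$ pin down their positions, hence the two vertical lines bounding $T$; the side-pairing matching these verticals is the primitive parabolic fixing $\infty$, and reading off its translation length expresses the length of a cusp horocycle $\{y=y_0\}$ as a function of the Euclidean height $y_0$. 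The whole problem then reduces to locating the height below which this horocycle is no longer embedded.

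To find that height I would use the precise-invariance criterion: the horoball $\{y>y_0\}$ injects into $S$ exactly as long as it is disjoint from all its translates under the holonomy group $\Gamma$. The natural first candidate is the translate across the opposite side $a$, i.e.\ the image of the horoball under inversion in the unit circle, which is a horoball based at a point of $\partial\mathbb{H}^2$; imposing tangency of the two horoballs yields an explicit critical height, and hence a candidate length. In parallel I would pin down the inscribed radius: the incenter is the interior point equidistant from the three sides, and its three distance equations — two to the vertical sides, of the form $\arcsinh(\cdot)$, and one to the unit circle — both reproduce the claimed closed form for $R(0,\vartheta_1,\vartheta_2)$ and express $R$ in the same coordinates used for the horocycle. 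Comparing the critical-height computation with the expression for $R$, after eliminating the triangle parameters, is what should produce the stated value $h(S)=\dfrac{4}{\sqrt{1+1/R}}$.

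I expect the difficulty to lie in two places. The first is computational: forcing the hyperbolic trigonometry into the compact shape $4/\sqrt{1+1/R}$ requires the right intermediate quantities — the Euclidean center and radius of the incircle together with $\cos\vartheta_1,\cos\vartheta_2$ — and this is where the algebra must be controlled. The second, and more structural, point is to justify that the tangency governing the critical height is really the one that controls embeddedness: a translate coming from the order-$k$ rotations about the cone points could a priori obstruct the horoball at a greater height, so the argument must verify that, for admissible $\vartheta_1,\vartheta_2$, no such translate interferes before the configuration used to define $h(S)$. Settling this last point carefully — not the formula manipulation — is what I would treat as the heart of the proof.
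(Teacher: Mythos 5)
Your plan aims at a harder statement than the lemma actually makes, and it leans on a criterion that is not available in this setting. The lemma only asks you to exhibit \emph{some} embedded horocycle of length at least $h(S)$; it does not ask for the critical (largest) embedded one. The paper's proof is correspondingly short: in one of the two triangles $T$ forming $S$, take the incenter $p$ and the two points of $b$ and $c$ (the sides adjacent to the cusp) at distance $R(0,\vartheta_1,\vartheta_2)$ from $p$, i.e.\ the tangency points of the incircle. These lie on a common horocycle centred at the cusp, and the region of $T$ that this horocycle cuts off towards the cusp is bounded only by the horocyclic arc and the portions of $b$ and $c$ beyond the tangency points; doubling $T$ therefore gives an embedded horocycle on $S$ with no group theory at all. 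A direct computation shows its length exceeds $h(S)$, and any shorter horocycle about the same cusp is a fortiori embedded. Your proposal instead tries to locate the exact height at which embeddedness first fails, which is both unnecessary and the source of the difficulty you yourself flag as unresolved.

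More seriously, the precise-invariance criterion you invoke --- ``the horoball injects iff it is disjoint from its translates under the holonomy group $\Gamma$'' --- presupposes $S=\mathbb{H}^2/\Gamma$ for a discrete group $\Gamma$. A triangular surface in this paper is an admissible cone-surface: the total angles $2\vartheta_1,2\vartheta_2$ are only required to be at most $\pi$, not of the form $2\pi/k$, so the rotations about the cone points generally generate a non-discrete group and $S$ is not a quotient of $\mathbb{H}^2$. The developing-map framework you describe therefore does not apply to the objects the lemma is about. Even if one restricts to the orbifold case, the step you correctly identify as the heart of your argument --- verifying that no group element (in particular none involving the cone-point rotations) obstructs the horoball before the tangency with the inversion across the opposite side --- is left entirely open, and it is exactly the part that the incircle argument renders unnecessary. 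I would redo the proof along the paper's lines: embeddedness comes for free from the incircle tangency points, and the only work is the length computation.
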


\begin{proof}
Consider $T$, one of the two triangles that form $S$ and let $p$ be the center of the inscribed disk in $T$. Consider the horocycle passing through the points on $b$ and $c$ at distance $R(0,\vartheta_1,\vartheta_2)$ from $p$. This is an embedded horocycle and direct computation shows that it is longer than $h(S)$.
\end{proof}
\noindent {\sc Notation:} if $A$ (resp.\ $B$, $C$) is a cusp, we denote $h_A$ (resp.\ $h_B$, $h_C$) the associated horocycle of length $h(S)$.

\subsection{The maximum injectivity radius for triangular surfaces}\label{computations}\ \\
The aim of this section is to give an implicit formula for the maximum injectivity radius of any triangular surface. To obtain this result, we characterize points whose injectivity radius is maximum on a given triangular surface and we get a system of equations, including the desired implicit formula.

\subsubsection{Some special loops}\ \\
In this section we denote $S_{\alpha, \beta,\gamma}$ simply by $S$, since the angles will be fixed. For $p\in S$, we consider the maximal embedded disk $B_{r_p}(p)$. As $r_p$ is the injectivity radius in $p$, either the disk is tangent to itself in at least a point or a cone point of order two belongs to the boundary of the disk.

Every point where $B{r_p}(p)$ is tangent to itself corresponds to a simple loop on the surface, made by the two radii from the center $p$ to the point of tangency. The loop is geodesic except in $p$, its length is $2r_p$ and it is length-minimizing in its class in $\pi_1(S\setminus\{A,B,C\},p)$.

If there is a cone point of order two, say $A$, and it belongs to the boundary of $B_{r_p}(p)$, we associate a loop $\gamma_A$ obtained by traveling from $p$ to $A$ and back on the length realizing geodesic between these two points. Clearly, this loop has length $2r_p$.
\begin{figure}[h]
\begin{center}
\includegraphics{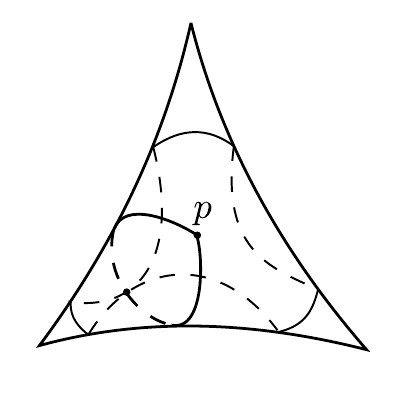}
\caption{A tangency point and its associated loop}
\end{center}
\end{figure}
We want to study the loops described above and use them to characterize points on $S$ with maximum injectivity radius.

Let $p$ be a non-singular point on $S$ and consider one corner, say $A$. Suppose $\alpha=\frac{\pi}{2}$; we define $\gamma_A$ to be, as before, the curve that traces the length realizing geodesic between $p$ and $A$ from $p$ to $A$ and back.
\begin{lemma}
Let $\mathcal{C}$ be the class in $\pi_1(S\setminus\{A,B,C\},p)$ of a simple loop around $A$ and based at $p$. Then
$$l(\gamma_A)=\inf\{l(\gamma)|\gamma\in \mathcal{C}\}.$$
\end{lemma}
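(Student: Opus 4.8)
Since $\alpha=\tfrac{\pi}{2}$, the cone point $A$ has total angle $2\alpha=\pi$, i.e.\ it is of order two, and by definition $l(\gamma_A)=2\,d(p,A)$. Writing $m:=d(p,A)$, the content of the lemma is the identity
\[
\inf\{l(\gamma)\mid\gamma\in\mathcal{C}\}=2m .
\]
The plan is to prove the two inequalities separately: an upper bound, by exhibiting an explicit minimizing sequence in $\mathcal{C}$ that degenerates onto the back-and-forth loop $\gamma_A$, and a lower bound, saying that every loop encircling an order-two cone point is at least twice the distance to that point. The value $\theta=\pi$ of the cone angle is precisely what makes the two bounds meet.

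For the lower bound I would develop the hyperbolic cone structure along the loop. Away from the cone points $S$ carries an $(\mathrm{Isom}^+(\mathbb{H}^2),\mathbb{H}^2)$-structure, so a loop $\gamma\in\mathcal{C}$ develops to a path $\tilde\gamma$ in $\mathbb{H}^2$ of the same length, and the holonomy of a simple loop around a cone point of angle $\theta$ is the rotation by $\theta$. Here $\theta=\pi$, so $\tilde\gamma(1)=\rho\bigl(\tilde\gamma(0)\bigr)$, where $\rho$ is the rotation by $\pi$ about a lift $\tilde A$ of $A$. A rotation by $\pi$ is the central symmetry about its fixed point, so $\tilde A$ is the midpoint of the segment $[\tilde\gamma(0),\rho\tilde\gamma(0)]$ and $d_{\mathbb{H}^2}(\tilde\gamma(0),\rho\tilde\gamma(0))=2\,d_{\mathbb{H}^2}(\tilde\gamma(0),\tilde A)$. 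Since the developing map is a local isometry, projecting the segment $[\tilde\gamma(0),\tilde A]$ back to $S$ produces a path from $p$ to $A$, whence $d_{\mathbb{H}^2}(\tilde\gamma(0),\tilde A)\ge m$. Combining,
\[
l(\gamma)=l(\tilde\gamma)\ge d_{\mathbb{H}^2}(\tilde\gamma(0),\rho\tilde\gamma(0))=2\,d_{\mathbb{H}^2}(\tilde\gamma(0),\tilde A)\ge 2m .
\]
For a cone angle $\theta<\pi$ the same computation would instead give the strictly smaller base of an isosceles triangle of apex angle $\theta$, so this step genuinely uses that $A$ is of order two.

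For the upper bound I would build, for small $\varepsilon>0$, a loop $\gamma_\varepsilon\in\mathcal{C}$ that follows the length-realizing geodesic from $p$ inwards until distance $\varepsilon$ from $A$, runs once around $A$ along the circle of radius $\varepsilon$ in the cone, and returns along the same geodesic. In a cone of angle $\pi$ this circle has length $\pi\sinh\varepsilon$, so $l(\gamma_\varepsilon)=2(m-\varepsilon)+\pi\sinh\varepsilon\to 2m$, giving $\inf\{l(\gamma)\mid\gamma\in\mathcal{C}\}\le 2m$ and, with the previous step, the claimed equality. The infimum is not attained, consistently with a Gauss--Bonnet count: a geodesic loop enclosing a disk containing the single cone point of angle $\pi$ and having one corner of interior angle $\psi$ at $p$ would need area $\pi-\pi-\psi=-\psi\le 0$, with equality only in the degenerate spike.

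The main obstacle is the lower bound, and within it the passage from the developed segment back to the surface: one must ensure that projecting $[\tilde\gamma(0),\tilde A]$ yields an admissible path from $p$ to $A$ rather than running into another developed cone point first. I would handle this by working inside the maximal embedded cone neighbourhood (the star) of $A$, where the developing map is an honest isometry onto a sector of angle $\pi$, and by arguing that a competitor leaving this neighbourhood can only be longer; the loops $\gamma_\varepsilon$ already live in the star, so no length is lost in the upper bound. Everything else---continuity of length, the holonomy computation, and the explicit circle length---is routine.
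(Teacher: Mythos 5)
Your argument is correct in substance but takes a genuinely different route from the paper's. The paper argues by cut-and-compare: it first removes intersections of a competitor $\gamma$ with $\gamma_A$ via a bigon argument, then cuts along $\gamma_A$ so that the slit from $p$ to $A$ opens up (using the angle $\pi$ at the order-two point) into a single geodesic arc of length $2d(p,A)$ joining the two copies of $p$, and compares $\gamma$ with that arc in the resulting simply connected piece. You encode the same geometry in the holonomy: the holonomy of $\mathcal{C}$ is an order-two rotation, so every developed competitor joins $\tilde\gamma(0)$ to its point-symmetric image and has length at least $2\,d_{\mathbb{H}^2}(\tilde\gamma(0),\tilde A)$. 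The two proofs are really the same computation seen from two sides --- the paper's geodesic between the two copies of $p$ is exactly your segment from $\tilde\gamma(0)$ to $\rho\tilde\gamma(0)$ --- but yours isolates more cleanly where $\alpha=\tfrac{\pi}{2}$ enters (a rotation by $\pi$ is a central symmetry, so the displacement is twice the distance to the center), and your explicit minimizing sequence $\gamma_\varepsilon$ makes the non-attainment transparent.

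One step should be tightened. Deducing $d_{\mathbb{H}^2}(\tilde\gamma(0),\tilde A)\ge m$ by ``projecting $[\tilde\gamma(0),\tilde A]$ back to $S$'' is the wrong way round: the developing map of an incomplete structure need not admit a local inverse along an arbitrary segment of $\mathbb{H}^2$, which is why you find yourself policing whether the segment stays in the star of $A$. The clean statement is an equality, not an inequality: for the class $\mathcal{C}$ represented by the boundary of a neighbourhood of the minimizing segment from $p$ to $A$ (which is what both you and the paper mean by ``the'' simple loop around $A$), the holonomy is the rotation by $\pi$ about the development of $A$ \emph{along that segment}, so $\tilde A$ is joined to $\tilde\gamma(0)$ by a developed geodesic of length exactly $m$ and $d_{\mathbb{H}^2}(\tilde\gamma(0),\tilde A)=m$. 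With that substitution no projection back to $S$ is needed and the lower bound is complete.
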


\begin{proof}
Consider $\gamma\in \mathcal{C}$. If it crosses $\gamma_A$,  the two curves form a bigon and $\gamma$ can be shortened while staying in the same homotopy class. So, to compute $\inf\{l(\gamma)|\gamma\in \mathcal{C}\}$ we can assume $\gamma$ doesn't cross $\gamma_A$. We cut along $\gamma$ and $\gamma_A$ and we get a subset of $\mathbb{H}^2$ bounded by two curves between two copies of $P$: the first one is a geodesic given by $\gamma_A$ and the second one is given by $\gamma$, hence $l(\gamma)\geq l(\gamma_A)$. One can construct curves in the class with length arbitrarily close to $l(\gamma_A)$, so the infimum is $l(\gamma_A)$.
\end{proof}

\begin{figure}[h]
\begin{center}
\includegraphics{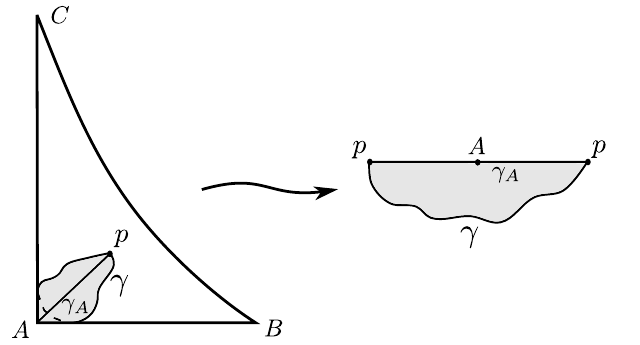}
\caption{Cutting along $\gamma_A$ for $A$ cone point of order two}
\end{center}
\end{figure}
Suppose now $\alpha<\frac{\pi}{2}$.

\begin{lemma}
There exists a unique simple loop $\gamma_A$ based at $p$, going around $A$, geodesic except at $p$. Moreover, it is length minimizing in its class in $\pi_1(S\setminus\{A,B,C\},p)$.
\end{lemma}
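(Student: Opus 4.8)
The plan is to realize $\gamma_A$ concretely by developing a neighborhood of the cone point $A$ into $\mathbb{H}^2$ and cutting, exactly as in the order-two case, and then to read off existence, simplicity and minimality from the elementary geometry of a single hyperbolic sector. First I would fix the shortest geodesic segment $\eta$ from $p$ to $A$, of length $d=\dist(p,A)>0$ (if $A$ is a cusp, replace $\eta$ by the geodesic ray into the cusp and run the same argument with the apex at infinity). Cutting $S$ open along $\eta$ and developing the total angle $2\alpha$ around $A$ produces a sector $W$ of angle $2\alpha$ in $\mathbb{H}^2$, with apex $\tilde A$ and bounding rays carrying two copies $p_1,p_2$ of $p$, each at distance $d$ from $\tilde A$ and with $\angle p_1\tilde A p_2=2\alpha$. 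I would then \emph{define} $\gamma_A$ to be the image in $S$ of the geodesic chord $[p_1,p_2]\subset\mathbb{H}^2$.

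Because $2\alpha\le\pi$, the sector $W$ is convex, so the chord $[p_1,p_2]$ stays inside $W$ and meets the two slit rays only at its endpoints; since those endpoints are both reglued to $p$, the projected curve is a \emph{simple} loop that separates $A$ from the rest of the picture, i.e.\ it goes around $A$ and meets the cut only at $p$. The chord is a geodesic, and at $p$ its two ends meet at a corner, so $\gamma_A$ is geodesic except at $p$; moreover it avoids every singular point, because a path through a cone point of total angle $2\alpha<2\pi$ makes two complementary angles summing to $2\alpha<2\pi$, which cannot both be at least $\pi$, so no locally length-minimizing geodesic can pass through a cone point.

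For minimality and uniqueness I would pass to the (orbifold) universal cover $\mathbb{H}^2$, in which the class $\mathcal{C}$ of a simple loop around $A$ is represented by the rotation $g$ of angle $2\alpha$ about the chosen lift $\tilde A$ (a parabolic fixing a point at infinity when $A$ is a cusp). A lift $\tilde p$ of $p$ is carried by $g$ to $g\tilde p$, and one checks that the two copies $p_1,p_2$ above are exactly $\tilde p$ and $g\tilde p$. Every loop in $\mathcal{C}$ then lifts to a path from $\tilde p$ to $g\tilde p$, so its length is at least $\dist_{\mathbb{H}^2}(\tilde p,g\tilde p)=l([p_1,p_2])=l(\gamma_A)$; thus $\gamma_A$ is length-minimizing in its class. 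Uniqueness follows from the same lift: any simple geodesic-except-at-$p$ representative of $\mathcal{C}$ lifts to a geodesic from $\tilde p$ to $g\tilde p$, and the geodesic segment between two points of $\mathbb{H}^2$ is unique, so that representative must coincide with $\gamma_A$.

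The step I expect to be delicate is justifying that the local developing of the sector $W$ faithfully represents $\gamma_A$ inside $S$: a priori the metric ball around $A$ of radius $d$ need not embed, since it can run into $B$ or $C$ or overlap itself, so the "cut and unroll" picture must be phrased through the developing map rather than as a genuine isometric embedding. The clean way to settle this is to carry out the whole argument in $\mathbb{H}^2$ as above and use the sharp-cone-angle property $2\alpha<2\pi$ to guarantee that the minimizing geodesic $[\tilde p,g\tilde p]$ meets no lift of a cone point, which is precisely what lets me conclude that $\gamma_A$ is simple, encircles only $A$, and is geodesic away from $p$. The cusp case $\alpha=0$ is then absorbed uniformly by letting $\tilde A$ recede to the ideal boundary and replacing the rotation $g$ by the corresponding parabolic.
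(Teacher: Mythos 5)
Your uniqueness argument and your lower bound on lengths are essentially sound (reading off the minimal length from the holonomy of $\mathcal{C}$ is a clean way to get minimality \emph{once a geodesic representative is known to exist}), but the existence half has a genuine gap, and it is exactly the one you flag at the end. The ``cut and unroll'' picture is only valid where a cone neighbourhood of $A$ embeds: the completion of $S$ cut along $\eta$ is a disc that still contains $B$ and $C$ as interior cone points, not a hyperbolic sector, so once $d(p,A)$ exceeds the injectivity radius at $A$ the chord $[p_1,p_2]$ and the triangle $\tilde A p_1 p_2$ need not correspond to anything in $S$. Your proposed repair does not close this. First, $S_{\alpha,\beta,\gamma}$ is a general admissible cone-surface, so the cone angles need not be of the form $2\pi/k$ and there is no orbifold universal cover isometric to $\mathbb{H}^2$; one must use the developing map of the incomplete hyperbolic structure on $S\setminus\{A,B,C\}$, which is neither injective nor surjective. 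Second, and more seriously, the smallness of the cone angle at $A$ says nothing about whether the development of a path issuing from $\tilde p$ along the segment $[\tilde p,g\tilde p]$ survives all the way to $g\tilde p$ rather than running into a lift of $B$ or $C$ (the only way a finite-length geodesic in the incomplete cover can fail to continue). The assertion ``the minimizing segment meets no lift of a cone point'' is a property one deduces \emph{after} knowing a minimizer exists and smoothing its corners; used as you use it, to manufacture the minimizer, the argument is circular.

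What is missing is a compactness step, and that is precisely how the paper proceeds: take a length-minimizing sequence in $\mathcal{C}$, confine it to a compact set (possible because lengths are bounded, so the curves cannot escape into a cusp), extract a limit by Arzel\`a--Ascoli, and then use the corner-shortening argument at cone points of total angle at most $\pi$ to show the limit avoids $A$, $B$ and $C$, hence lies in $\mathcal{C}$ and is geodesic except at $p$. With that existence statement in hand, your development of the loop onto the segment $[\tilde p,g\tilde p]$ becomes legitimate and gives both minimality and uniqueness; without it, the explicit chord you define may simply not be a curve on $S$.
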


\begin{proof}
Consider the class $\mathcal{C}\in\pi_1(S\setminus\{A,B,C\},p)$ of a simple closed curve with base point $p$ that goes around $A$. We first show the uniqueness of $\gamma_A$. Suppose then $\gamma_A$ is a simple loop in $\mathcal{C}$, geodesic except at $p$. Then it doesn't cross the length-realizing geodesic between $p$ and $A$, otherwise they would form a geodesic bigon. We can cut along the geodesic; $\gamma_A$ is given by the unique geodesic between the two copies of $p$ on the cut surface.

We show now that $\gamma_A$ exists and is length-minimizing. Let $\gamma_n$ be a sequence of smooth curves in $\mathcal{C}$ with lengths converging to $\inf\{l(\gamma)|\gamma\in \mathcal{C}\}$. If there is any cusp on $S$, since the length of the curves $\gamma_n$ is bounded above, we can assume that they are contained in a compact subset of $S$. By the Arzel\`a-Ascoli Theorem (see \cite[Theorem A.19]{buser}) we get a limit curve $\gamma_A$. Note that $\gamma_A$ doesn't contain $A$: it is clear if $A$ is a cusp, while if $\alpha>0$ and $\gamma_A$ passes through $A$, it forms an angle at $A$ smaller or equal to $2\alpha<\pi$, so it can be shortened to a curve in $\mathcal{C}$, contradiction. In particular, $\gamma_A\in \mathcal{C}$. By the minimality, $\gamma_A$ is simple and geodesic except at $p$.
\end{proof}

\noindent{\sc Notation:} $l_A:=l(\gamma_A)$, $l_B:=l(\gamma_B)$ and $l_C:=l(\gamma_C)$. By $\tilde{\alpha}$ (resp. $\tilde{\beta}$, $\tilde{\gamma}$) we denote the acute angle of $l_A$ (resp. $l_B$, $l_C$) at $p$.

Clearly, for $\alpha=\frac{\pi}{2}$, the length is twice the distance $d(p,A)$. So $l_A$ increases (continuously) when $d(p,A)$ increases.

If $\alpha=0$, i.e.\ if $A$ is a cusp, consider the horocycle $h_A$. Cut along the loop itself and the geodesic from $p$ to the cusp. We represent the triangle we obtain in the upper half plane model of $\mathbb{H}^2$, choosing $A$ to be the point at infinity. Note that the geodesic between $p$ and the cusp is the bisector of $\tilde{\alpha}$. By hyperbolic trigonometry, we get
$$1=\cos(0)=\cosh\left(\frac{l_A}2\right)\sin\left(\frac{\tilde{\alpha}}{2}\right).$$
Let $d_{h_A}(p)$ be $d(p,h_A)$, if $p$ doesn't belong  to the horoball bounded by $h_A$, and $-d(p,h_A)$ otherwise. By direct computation we get that $l_A$ is a continuous monotone decreasing function of $\tilde{\alpha}$, so a continuous monotone increasing function of $d_{h_A}(p)$.

\begin{figure}[h]
\begin{center}
\includegraphics{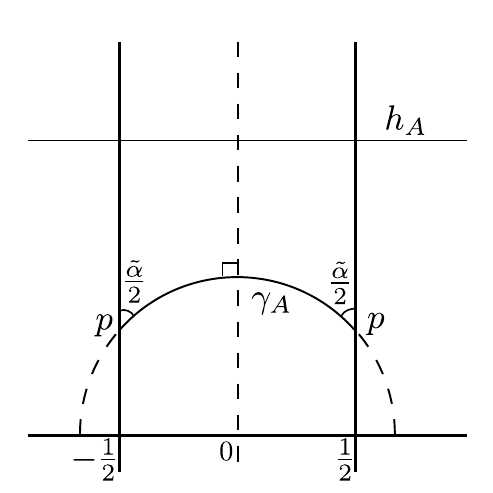}
\caption{The loop around a cusp in the upper half plane}
\end{center}
\end{figure}

If $\alpha\in\left(0,\frac{\pi}{2}\right)$, we cut along the geodesic from $p$ to $A$ and we get an isosceles triangle. Again, the geodesic from $p$ to $A$ is a bisector of $\tilde{\alpha}$. Using hyperbolic trigonometry, we obtain the equations
$$\cos(\alpha)=\cosh\left(\frac{l_A}2\right)\sin\left(\frac{\tilde{\alpha}}{2}\right)$$
and
$$\sinh\left(\frac{l_A}2\right)=\sin(\alpha)\sinh(d(p,A)).$$
In particular, $l_A$ is a continuous monotone increasing function of $d(p,A)$.
\begin{figure}[H]
\begin{center}
\includegraphics{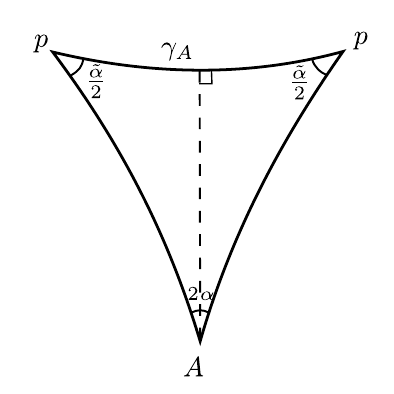}
\caption{The triangle associated to the loop for $\alpha\in\left(0,\frac{\pi}{2}\right)$}
\end{center}
\end{figure}
So, for $p\in S$, we can define
$$\dist_A(p)=\left\{\begin{array}{l}
d(p,A) \mbox{ if } \alpha\neq 0\\
d_{h_A}(p) \mbox{ if } \alpha= 0
\end{array}\right.$$
Similarly for $B$ and $C$. We have just proven the following:

\begin{lemma}
The length $l_A$ (resp.\ $l_B$, $l_C$) is a continuous monotone increasing function of $\dist_A(p)$ (resp.\ $\dist_B(p)$, $\dist_C(p)$).
\end{lemma}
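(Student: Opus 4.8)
The statement is exactly the conclusion of the three computations carried out just above, so the plan is to assemble them into a single case analysis according to the value of the fixed angle $\alpha$ at the corner $A$ (the cases of $B$ and $C$ being identical). I would first dispose of the degenerate corner $\alpha=\frac{\pi}{2}$, where $A$ is a cone point of order two lying on the boundary of the maximal disk: here the loop $\gamma_A$ is the length-realizing geodesic from $p$ to $A$ traversed twice, so $l_A=2\,d(p,A)=2\dist_A(p)$, which is manifestly continuous and strictly increasing in $\dist_A(p)$.

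For a genuine cone point, $\alpha\in\left(0,\frac{\pi}{2}\right)$, I would cut $S$ along the length-realizing geodesic from $p$ to $A$ to unfold the region bounded by $\gamma_A$ into an isosceles triangle with apex angle $2\alpha$ at $A$, equal sides of length $d(p,A)$ and base $\gamma_A$ of length $l_A$. Halving this triangle along its axis of symmetry produces a right triangle with acute angles $\alpha$ and $\frac{\tilde\alpha}{2}$, hypotenuse $d(p,A)$ and a leg of length $\frac{l_A}{2}$ opposite $\alpha$; the right-triangle relation $\sinh\left(\frac{l_A}{2}\right)=\sin\alpha\,\sinh\bigl(d(p,A)\bigr)$ then exhibits $l_A$ as a composition of increasing functions of $d(p,A)=\dist_A(p)$, giving continuity and strict monotonicity simultaneously.

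The remaining and most delicate case is the cusp, $\alpha=0$, the one I expect to be the main obstacle, because the relevant quantity is now the \emph{signed} distance $\dist_A(p)=d_{h_A}(p)$, which is negative when $p$ lies inside the horoball bounded by $h_A$. Here I would cut along the geodesic ray from $p$ to the cusp and represent the resulting ideal triangle in the upper half-plane with $A$ at infinity, obtaining the limiting relation $1=\cosh\left(\frac{l_A}{2}\right)\sin\left(\frac{\tilde\alpha}{2}\right)$. The difficulty is that $l_A$ is most naturally a function of the angle $\tilde\alpha$ rather than of $\dist_A(p)$ directly, so I would proceed in two steps: the displayed identity shows that $l_A$ is a continuous, strictly decreasing function of $\tilde\alpha$, and a separate horocyclic computation shows that $\tilde\alpha$ is a continuous, strictly decreasing function of $d_{h_A}(p)$ (pushing $p$ away from the cusp narrows the loop). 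Composing these two monotonicities yields the desired continuous, strictly increasing dependence of $l_A$ on $\dist_A(p)$, and collecting the three cases completes the argument.
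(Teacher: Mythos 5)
Your proposal is correct and follows essentially the same route as the paper: the same three-way case analysis on $\alpha$, the same unfolding into an isosceles (resp.\ ideal) triangle, and the same trigonometric identities $\sinh\left(\frac{l_A}{2}\right)=\sin(\alpha)\sinh(d(p,A))$ and $1=\cosh\left(\frac{l_A}{2}\right)\sin\left(\frac{\tilde{\alpha}}{2}\right)$, with the cusp case handled by composing the two monotone dependences exactly as the paper does via its ``direct computation.''
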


Now, given a point $p$ on the surface, we can cut along the loops $\gamma_A$, $\gamma_B$ and $\gamma_C$. If there is no cone point of order two, we obtain four pieces: three of them are associated each to a singular point, and the fourth one is a triangle of side lengths $l_A$, $l_B$ and $l_C$.
\begin{figure}[H]
\begin{center}
\includegraphics{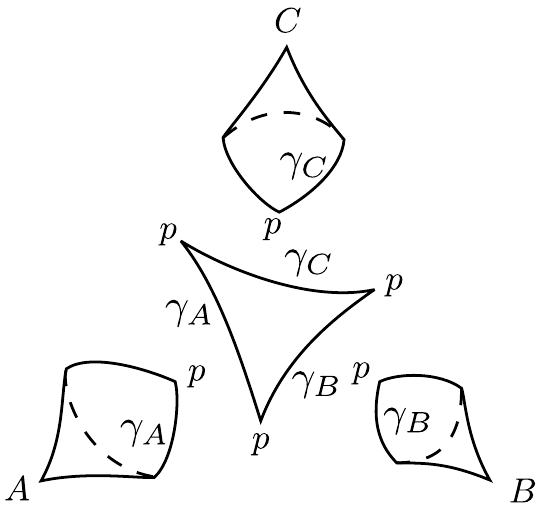}
\caption{$S$ cut along the three loops (case without cone points of order two)}
\end{center}
\end{figure}
If there is a cone point of order $2$, we get only three pieces: two associated to the other singular points and again a triangle of side lengths $l_A$, $l_B$ and $l_C$.
\begin{figure}[H]
\begin{center}
\includegraphics{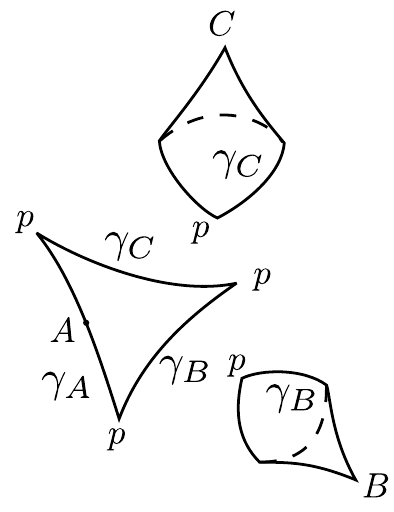}
\caption{$S$ cut along the three loops for $A$ of order $2$}
\end{center}
\end{figure}
\subsubsection{Lengths of the loops and injectivity radius}\ \\
The aim of this part is to relate the lengths $l_A$, $l_B$ and $l_C$ with the injectivity radius in a point. We first need two lemmas.
\begin{lemma}\label{minloops}
Given a triangular surface $S$, a non-singular point $p\in S$ and its three associated loops, we have
$$r_p=\min\left\{\frac{l_A}{2},\frac{l_B}{2},\frac{l_C}{2}\right\}.$$
\end{lemma}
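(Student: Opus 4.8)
The plan is to establish the two inequalities $r_p \le \min\{l_A/2, l_B/2, l_C/2\}$ and $r_p \ge \min\{l_A/2, l_B/2, l_C/2\}$ separately. The second is the easier half: it follows by exhibiting one index $X$ for which $r_p = l_X/2$ exactly, coming from the obstruction that stops the maximal disk from growing. The first requires showing that \emph{no} loop $\gamma_X$ can be shorter than $2r_p$, and this is where the real content lies.

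For the upper bound I would fix $X\in\{A,B,C\}$ and argue by contradiction, assuming $l_X < 2r_p$, exploiting that $B_{r_p}(p)$ is isometric to a hyperbolic disk $D$ of radius $r_p$. I split into two cases. If $X$ is a cusp or a cone point of angle $<\pi$, then $\gamma_X$ is a genuine simple loop, geodesic except at $p$ and avoiding $X$; let $m=\gamma_X(l_X/2)$ be its midpoint. The two halves of $\gamma_X$ are geodesic segments from $p$ to $m$ of length $l_X/2 < r_p$, so each lies inside $B_{r_p}(p)$ (a point at arc-length $t$ along a half is within distance $t\le l_X/2 < r_p$ of $p$). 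Under the isometry with the geodesically convex disk $D$, both halves become $\mathbb{H}^2$-geodesics joining the center to the image of $m$; by uniqueness of geodesics in $D$ they must coincide, forcing $\gamma_X$ to retrace itself and contradicting simplicity. If instead $X$ is a cone point of order two, then $l_X = 2\,d(p,X)$, so $l_X < 2r_p$ would place $X$ in the interior of $B_{r_p}(p)$; but an interior point of a set isometric to a hyperbolic disk has total angle $2\pi\neq\pi$, a contradiction. Hence $l_X\ge 2r_p$ in all cases, giving $r_p\le \min\{l_A/2, l_B/2, l_C/2\}$.

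For the reverse inequality I would invoke the structural fact recalled above: the maximal disk $B_{r_p}(p)$ is either tangent to itself or has a cone point of order two on its boundary. In the first case the two radii to the tangency point form a simple loop, geodesic except at $p$, of length $2r_p$ and minimizing in its homotopy class. Since every non-trivial simple based loop on the thrice-punctured sphere encircles exactly one cone point, this loop lies in the class of some $\gamma_X$, and by the uniqueness of $\gamma_X$ proved earlier it must equal $\gamma_X$; thus $l_X = 2r_p$. In the second case, the order-two point $X$ on the boundary satisfies $d(p,X)=r_p$, so again $l_X = 2\,d(p,X) = 2r_p$. Either way $r_p = l_X/2 \ge \min\{l_A/2, l_B/2, l_C/2\}$.

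Putting the two halves together yields the claimed equality. I expect the main obstacle to be the upper bound, specifically ruling out a loop shorter than $2r_p$: the argument hinges on uniqueness of geodesics inside an embedded hyperbolic disk in the generic case and on the exclusion of cone points from the disk's interior in the order-two case, and the care required is in making these degenerate cases (order-two points and cusps) fit cleanly alongside the generic geodesic-loop argument.
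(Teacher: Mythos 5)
Your proof is correct and follows essentially the same route as the paper: the lower bound comes from the tangency/order-two dichotomy producing a loop of length $2r_p$ identified with some $\gamma_X$, and the upper bound from the midpoint-overlap argument (the paper phrases the latter as ``every disk of radius bigger than $l_A/2$ overlaps in $q$,'' which is exactly your uniqueness-of-geodesics-in-the-embedded-disk contradiction) together with the separate treatment of order-two cone points. Your write-up just supplies more of the details that the paper leaves implicit.
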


\begin{proof}
Denote $m:=\min\left\{\frac{l_A}{2},\frac{l_B}{2},\frac{l_C}{2}\right\}$.

We already remarked that either $B_{r_p}(p)$ is tangent to itself in at least a point or its boundary contains a cone point of order $2$. In both situations, we get a loop at $p$ of length $2r_p$. So $r_p\geq m$.

To prove the other inequality, suppose $\gamma_A$ is the shortest of the three loops, i.e.\ $l_A=2m$. If $\alpha=\frac{\pi}{2}$, then $r_p\leq d(p,A)=\frac{l_A}{2}=m$, since an embedded disk cannot contain a singular point. Otherwise, if $\alpha\neq \frac{\pi}{2}$, consider the point $q$ on $\gamma_A$ at distance $\frac{l_A}{2}=m$ from $p$. Every disk of radius bigger than $\frac{l_A}{2}$ overlaps in $q$, hence again $r_p\leq m$.
\end{proof}
Let $T\subseteq \mathbb{H}^2$ be a hyperbolic triangle with angles $\alpha,\beta,\gamma$ at most $\frac{\pi}{2}$. If $\alpha=0$, consider a horocycle $H_A$ based at $A$ such that $$l(H_A\cap T)=\frac{1}{2}h(\alpha,\beta,\gamma).$$
For $p\in \mathbb{H}^2$, we define $\dist_A(p)$, $\dist_B(p)$ and $\dist_C(p)$ as before in the case of a triangular surface, with $H_A$ instead of $h_A$.

\begin{lemma}\label{3dist}
Let $T\subseteq \mathbb{H}^2$ be a hyperbolic triangle and $p,p'\in T$. If 
$$\left\{\begin{array}{c}
\dist_A(p')\geq \dist_A(p)\\
\dist_B(p')\geq \dist_B(p)\\
\dist_C(p')\geq \dist_C(p)
\end{array}\right.$$
then $p=p'$.
\end{lemma}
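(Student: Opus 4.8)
The statement asserts that one cannot move within $T$ so as to weakly increase all three quantities $\dist_A,\dist_B,\dist_C$ at once without staying put, so the plan is to extract an infinitesimal obstruction from the convexity of these distance functions together with a positive-spanning property of their gradients. I would first record the two geometric inputs. In $\mathbb{H}^2$ the function $\dist_X$ is geodesically convex: for a finite vertex it is $d(\cdot,X)$, which is \emph{strictly} convex along any geodesic avoiding $X$; for a cusp it is the signed distance to the horocycle $H_X$, i.e.\ up to an additive constant a Busemann function, which is convex and strictly convex along every geodesic not asymptotic to $X$. Moreover $T$ is geodesically convex, so the geodesic segment between any two of its points stays inside $T$.

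Assuming $p\ne p'$ and that the three displayed inequalities hold, let $\sigma\colon[0,L]\to T$ be the unit-speed geodesic with $\sigma(0)=p$, $\sigma(L)=p'$, $L>0$, and set $g_X(t):=\dist_X(\sigma(t))$. Each $g_X$ is convex, so the secant-slope bound $g_X'(L)\ge\frac{g_X(L)-g_X(0)}{L}$ holds; since $g_X(L)=\dist_X(p')\ge\dist_X(p)=g_X(0)$, this forces $g_X'(L)\ge 0$ for every $X$. Writing $u_X$ for the unit gradient of $\dist_X$ at $p'$ (the unit vector pointing \emph{away} from $X$, resp.\ away from the cusp) and $w:=-\sigma'(L)$ for the unit tangent at $p'$ directed back toward $p$, this reads $\langle w,u_X\rangle\le 0$ for $X=A,B,C$.

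The geometric heart is that, when $p'$ lies in the \emph{interior} of $T$, the vectors $u_A,u_B,u_C$ cannot all lie in a closed half-plane, so no nonzero $w$ satisfies $\langle w,u_X\rangle\le 0$ for all three, a contradiction. Indeed, the geodesic rays from $p'$ to the three (possibly ideal) vertices cut a neighbourhood of $p'$ into the three nondegenerate subtriangles $p'AB$, $p'BC$, $p'CA$, whose angles at $p'$ lie in $(0,\pi)$ and sum to $2\pi$; hence the three directions toward the vertices, and so also their opposites $u_X$, positively span the tangent plane.

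It remains to dispose of the boundary of $T$, which is where I expect the only real friction, since there the $u_X$ can degenerate into a half-plane and the infinitesimal obstruction alone no longer closes the argument. If $p'$ is a finite vertex, say $p'=A$, then $\dist_A(p')=0\le\dist_A(p)$ collapses the hypothesis to $\dist_A(p)=0$, giving $p=A=p'$ at once (an ideal vertex is not a point of $T$). If $p'$ lies in the interior of an edge, say $BC$, then $u_B=-u_C$, so $g_B'(L)=-g_C'(L)$, and the forced inequalities $g_B'(L),g_C'(L)\ge 0$ pin both to $0$ and make $w$ perpendicular to that edge; choosing a finite endpoint of the edge (or, if both endpoints are cusps, using that $w\perp BC$ makes $\sigma$ transversal, hence non-asymptotic), the relevant $g_X$ is \emph{strictly} convex with vanishing derivative at $L$, hence strictly decreasing on $[0,L]$, so $\dist_X(p')<\dist_X(p)$, contradicting the hypothesis. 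This upgrade from convexity to strict convexity is the step I would handle most carefully, as it is precisely what eliminates the degenerate boundary directions.
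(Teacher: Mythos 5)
Your proof is correct, but it takes a genuinely different route from the paper's. The paper argues globally and set-theoretically: it introduces the sublevel sets $\mathcal{D}_X=\{q:\dist_X(q)\le\dist_X(p)\}$, which are disks or horoballs (hence convex, each containing $p$ on its boundary circle/horocycle $\mathcal{C}_X$), shows that $\mathcal{D}_A\cup\mathcal{D}_B\cup\mathcal{D}_C$ is star-shaped with respect to $p$ and contains the three sides, hence all of $T$; a point $p'$ satisfying the three inequalities is then forced into $\mathcal{C}_A\cap\mathcal{C}_B\cap\mathcal{C}_C$, and a short argument in the Poincar\'e disk model (where the $\mathcal{C}_X$ are Euclidean circles) shows this triple intersection is a single point, namely $p$. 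Your argument is instead variational: convexity of $\dist_X$ along the geodesic from $p$ to $p'$ forces the three gradients at $p'$ into a closed half-plane, which is impossible at an interior point because the directions to the three (possibly ideal) vertices split the full angle $2\pi$ at $p'$ into three angles each in $(0,\pi)$; the boundary cases are then eliminated by strict convexity. The paper's approach buys a uniform treatment with no interior/boundary case split and an explicit description of the level sets; yours buys a model-free argument (only convexity of distance and Busemann functions is used, so it would transfer to more general Hadamard surfaces), at the cost of the boundary analysis. One detail worth making explicit in your edge case: when the chosen endpoint $B$ of the edge is a finite vertex, strict convexity of $g_B$ along $\sigma$ still requires that $\sigma$ not pass through $B$; this does follow at once from $w\perp BC$ at $p'\neq B$ (a geodesic meeting $BC$ orthogonally at $p'$ cannot contain $B$), but your parenthetical only addresses the case of two ideal endpoints.
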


\begin{proof}
Consider a corner, say $A$ (similarly for $B$ and $C$): we define 
\begin{gather*}\mathcal{C}_A:=\{q\in\mathbb{H}^2 \mbox{ }| \dist_A(q)=\dist_A(p)\}\\
\mathcal{D}_A:=\{q\in\mathbb{H}^2 \mbox{ }| \dist_A(q)\leq\dist_A(p)\}.
\end{gather*}
So $p\in\mathcal{C}_A\cap\mathcal{C}_B\cap\mathcal{C}_C\subseteq\mathcal{D}_A\cap\mathcal{D}_B\cap\mathcal{D}_C$.

Note that if $A$ is not an ideal point, $\mathcal{C}_A$ is a circle and $\mathcal{D}_A$ is the disk bounded by $\mathcal{C}_A$. If $A$ is an ideal point, $\mathcal{C}_A$ the horocycle based at $A$ passing through $p$ and $\mathcal{D}_A$ is the associated horoball. In any case, $\mathcal{D}_A$ is a convex set.

Let $\mathcal{D}$ be the union $\mathcal{D}_A\cup \mathcal{D}_B\cup\mathcal{D}_C$. Since $\mathcal{D}$ is star-shaped with respect to $p$ and it contains the three sides, $T\subseteq \mathcal{D}$.

Consider now $p'\in T$ with
$$\left\{\begin{array}{c}
\dist_A(p')\geq \dist_A(p)\\
\dist_B(p')\geq \dist_B(p)\\
\dist_C(p')\geq \dist_C(p)
\end{array}\right.$$
Then $$p'\in T\setminus(\mathring{\mathcal{D}_A}\cup\mathring{\mathcal{D}_B}\cup\mathring{\mathcal{D}_C}).$$
Since $T\subseteq\mathcal{D}$ we have $p'\in\mathcal{C}_A\cap\mathcal{C}_B\cap\mathcal{C}_C$. We show that $\mathcal{C}_A\cap\mathcal{C}_B\cap\mathcal{C}_C$ is only one point, hence $p=p'$. Suppose by contradiction that the intersection contains two points. In the Poincar\'e disk model, $\mathcal{C}_A$, $\mathcal{C}_B$ and $\mathcal{C}_C$ are Euclidean circles, so if they intersect in two points, these should both belong to the boundary of $\mathcal{D}$. In particular $p\in T\cap\partial\mathcal{D}\subseteq\partial T$. Suppose without loss of generality $p\in c$; then the circles $\mathcal{C}_A$ and $\mathcal{C}_B$ are tangent in $p$, so they intersect only in one point. As a consequence, $\mathcal{C}_A\cap\mathcal{C}_B\cap\mathcal{C}_C$ can contain at most one point, a contradiction.
\end{proof}
We can now characterize points with maximum injectivity radius.

\begin{prop}\label{3tg}
Given a point $p\in S$, the following are equivalent:
\begin{enumerate}
\item $p$ is a global maximum point for the injectivity radius,
\item $p$ is a local maximum point for the injectivity radius,
\item the three loops at $p$ have the same length $2r_p$.
\end{enumerate}
Moreover, there are exactly two global maximum points.
\end{prop}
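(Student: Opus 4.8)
The plan is to work inside one of the two triangles composing $S$ and to reduce everything to two facts established above: Lemma \ref{minloops} gives $r_p=\tfrac12\min\{l_A,l_B,l_C\}$, and the monotonicity lemma says each $l_X$ is a strictly increasing function of $\dist_X(p)$ alone. The implication $(1)\Rightarrow(2)$ is trivial. For $(3)\Rightarrow(1)$ suppose $l_A=l_B=l_C=2r_p$ and let $p'$ be any point with $r_{p'}\ge r_p$. Then for each corner $X$ we have $l_X(p')\ge 2r_{p'}\ge 2r_p=l_X(p)$, so monotonicity yields $\dist_X(p')\ge\dist_X(p)$ for all $X$, and Lemma \ref{3dist} forces $p'=p$ (a point $p'$ in the other triangle is compared via the reflection $\iota$ exchanging the two triangles, which preserves $r$). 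Thus no point of $S$ has strictly larger injectivity radius, so $p$ is a global maximum; moreover the same computation shows $r$ attains the value $r_p$ at the single point $p$ of its triangle, a uniqueness fact I reuse for the final count.

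The heart of the argument is $(2)\Rightarrow(3)$, which I prove in contrapositive form for $p$ interior to a triangle (boundary points are treated below and are never local maxima). Let $I\subseteq\{A,B,C\}$ collect the corners realizing $2r_p=\min_X l_X$; if the loops are not all equal then $|I|\le 2$ and the remaining loops exceed $2r_p$ by a definite amount. Because $l_X$ increases with $\dist_X$, it suffices to find a direction $w$ at $p$ increasing $\dist_X$ for every $X\in I$: a short displacement along $w$ then raises every minimal loop while the others stay above the minimum by continuity, so $r$ strictly increases, contradicting local maximality. If $|I|=1$ I move radially away from the corresponding vertex or horocycle. If $|I|=2$, say $I=\{B,C\}$, the gradients of $\dist_B$ and $\dist_C$ are the outward normals to the circles (or horocycles) $\mathcal{C}_B,\mathcal{C}_C$ of Lemma \ref{3dist}; since $p$ is interior it does not lie on the geodesic through $B$ and $C$, so these two normals are linearly independent and the cone of directions making an acute angle with both is nonempty, and any such $w$ works.

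Finally I count the maxima. By the uniqueness in $(3)\Rightarrow(1)$ each closed triangle of $S$ contains exactly one maximizer, and $\iota$ carries one to the other; hence there are at most two global maxima, and the number equals $2$ precisely when the maximizer is interior. Establishing this interiority is the step I expect to be the main obstacle, and it amounts to showing that no point of a side is a local maximum. For $p$ on the open side $c$ opposite $C$, pushing $p$ along the inward normal strictly increases both $d(p,A)$ and $d(p,B)$ by the hyperbolic Pythagorean relation (as $A,B$ lie on the geodesic carrying $c$), hence increases both $l_A$ and $l_B$; so it is enough to know that $l_C$ is strictly larger than both of them there. This inequality turns out to be clean: writing $\psi=\angle ACp$, the law of sines in the subtriangle $ACp$ gives $\sin\gamma\,\sinh d(p,C)>\sin\alpha\,\sinh d(p,A)$ exactly when $\sin\psi<\sin\gamma$, and indeed $\psi<\gamma\le\tfrac\pi2$ because the ray $Cp$ lies strictly inside the angle at $C$; symmetrically $l_C>l_B$ (the cusp cases $\alpha=0$ or $\gamma=0$ being handled by the corresponding horocyclic comparison). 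Consequently every local, hence every global, maximum is interior, the two mirror maxima are distinct, and the proposition follows.
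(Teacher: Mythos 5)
Your proof of the three equivalences follows essentially the paper's route: $(1)\Rightarrow(2)$ is trivial, $(3)\Rightarrow(1)$ combines Lemma \ref{minloops}, the monotonicity of $l_X$ in $\dist_X$, Lemma \ref{3dist} and the reflection exchanging the two triangles, and $(2)\Rightarrow(3)$ is the same perturbation argument --- your ``cone of common ascent directions'' for $|I|=2$ is an abstract phrasing of the paper's explicit choice, namely the perpendicular from $p$ to the side joining the two minimal corners; note that this explicit choice still works when $p$ lies on a side, so the paper needs no interiority hypothesis in this step. Where you genuinely diverge is in excluding maxima on the sides, which is what yields ``exactly two''. The paper supposes $p\in c$ is a global maximum, uses that $\gamma_A$ and $\gamma_B$ are symmetric about $c$ and hence meet $b$ and $a$ orthogonally to get $d(p,a)=d(p,b)=r_p$, and derives the contradiction $2r_p=l_C>d(p,a)+d(p,b)=2r_p$ because $\gamma_C$ crosses both $a$ and $b$. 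You instead prove the pointwise inequality $l_C>l_A,\ l_B$ on the whole open side $c$ via the law of sines in the subtriangle $ACp$, using $\sinh(l_X/2)=\sin(\angle X)\sinh d(p,X)$ and $\angle ACp<\gamma\leq\tfrac{\pi}{2}$, and then push $p$ off $c$ along the normal. That inequality is correct and your route is more quantitative, but it is also the one place where something is left undone: when $\alpha=0$ or $\gamma=0$ the formula degenerates and you only gesture at ``the corresponding horocyclic comparison'', which genuinely requires its own computation with the cusp relation $\cosh(l_A/2)\sin(\tilde{\alpha}/2)=1$ and the signed distance $d_{h_A}$, whereas the paper's orthogonality argument covers cusps with no extra work. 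Supply that computation (or substitute the paper's side argument) and the proof is complete.
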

\begin{proof}
$(1)\Rightarrow (2)$ Clear.

$(2)\Rightarrow (3)$ By contradiction, suppose only one or two loops have length $2r_p$. Without loss of generality, assume $l_A\leq l_B \leq l_C$; by Lemma \ref{minloops}, $r_p=\frac{l_A}{2}$.

If only one loop has length $2r_p$, then $l_A<l_B\leq l_C$. Let us consider the geodesic segment between $p$ and $A$. Every point $p'$ on the prolongation of the geodesic segment after $p$ satisfies $\dist_A(p')>\dist_A(p)$. Thus every such $p'$ has a longer loop around $A$. By continuity of the lengths of the loops, for points that are sufficiently close to $p$ the loop around $A$ is still the shortest. These points have injectivity radius bigger than $r_p$, i.e.\ $p$ is not a local maximum point.

If two loops have length $2r_p$, we have $l_A=l_B<l_C$; we choose this time the orthogonal $l$ from $p$ to the side $c$. Every point $p'$ on $l$ that is further away from $c$ then $p$ satisfies $\dist_A(p')> \dist_A(p)$ and $\dist_B(p')> \dist_B(p)$.
\begin{figure}[h]
\begin{center}
\includegraphics{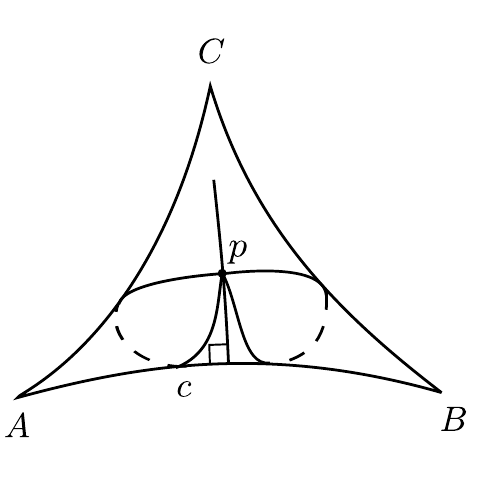}
\caption{$P$ with the orthogonal to $c$}
\end{center}
\end{figure}

So like before, every point on $l$ in a small enough neighborhood of $p$ has bigger injectivity radius than $r_p$ and again $p$ is not a local maximum point.

$(3)\Rightarrow (1)$ Suppose $l_A=l_B=l_C=2r_p$ and suppose by contradiction that $p$ isn't a global maximum point. Then there exists $q$ with $r_q>r_p$. Let $\psi$ be the orientation reversing isometry that exchanges the two triangles forming $S$ and fixes the sides pointwise. By possibly replacing $q$ with $\psi(q)$, we can assume $q$ belongs to the same triangle as $p$. Since $r_q>r_p$, the loops at $q$ are strictly longer than the ones at $p$. So $\dist_A(q)>\dist_A(p)$, $\dist_B(q)>\dist_B(p)$ and $\dist_C(q)>\dist_C(p)$, in contradiction with Lemma \ref{3dist}.

We still have to show that there are exactly two global maximum points. With similar arguments as the ones above, if two points $p$ and $p'$ on one triangle are global maximum points for the injectivity radius, then $p=p'$. So we have at most one global maximum point per triangle. To conclude that there are exactly two global maximum points, it's enough to show that a global maximum point cannot belong to a side. Suppose it does and assume, without loss of generality, $p\in c$. Since the geodesics from $A$ to $p$ and from $B$ to $p$ are bisectors of $\tilde{\alpha}$ and $\tilde{\beta}$ respectively, the loops $\gamma_A$ and $\gamma_B$ meet the sides $b$ and $a$ orthogonally. So $d(p,a)=d(p,b)=r_P$. Let $q$ and $q'$ be the intersections of $\gamma_C$ with $a$ and $b$. Then
$$2r_p=l_C>d(p,p)+d(p,q')\geq d(p,a)+d(p,b)=2r_p,$$
a contradiction.
\end{proof}

\subsubsection{Constructing the system}\ \\
We can now write a system of equations for a point with maximum injectivity radius. This will give us an implicit formula for $r(S)$.

Suppose first $\alpha,\beta,\gamma<\frac{\pi}{2}$, and let $p$ be a maximum for the injectivity radius. By Proposition \ref{3tg}, the three loops $\gamma_A$, $\gamma_B$ and $\gamma_C$ have length $2r_p$. Cutting along the three loops we get four pieces; the three associated to the singular points give us the equations
\begin{gather*}
\cos(\alpha)=\cosh(r_p)\sin\left(\frac{\tilde{\alpha}}{2}\right)\\
\cos(\beta)=\cosh(r_p)\sin\left(\frac{\tilde{\beta}}{2}\right)\\
\cos(\gamma)=\cosh(r_p)\sin\left(\frac{\tilde{\gamma}}{2}\right).
\end{gather*}
Vice versa, any positive solution to these equations determines the three pieces.

The fourth piece is a triangle, equilateral since the three sides are the three loops. Denote $\vartheta$ the angle of the triangle. By hyperbolic trigonometry we have
$$\cosh(2r_p)=\frac{\cos(\vartheta)+\cos^2(\vartheta)}{\sin^2(\vartheta)},$$
or equivalently
$$\cosh(r_p)=\sqrt{\frac{1}{2(1-\cos(\vartheta))}}.$$
A positive solution of the above equation determines the triangle. Note that $\vartheta\neq 0$, because otherwise two loops would have the same direction in $p$, which is impossible.

Since $p$ is not a singular point, the angles at it should sum up to $2\pi$. Thus, we obtain the following system:
\begin{equation}\label{sys1}
\left\{\begin{array}{l}
\cos(\alpha)=\cosh(r_p)\sin\left(\frac{\tilde{\alpha}}{2}\right)\\
\cos(\beta)=\cosh(r_p)\sin\left(\frac{\tilde{\beta}}{2}\right)\\
\cos(\gamma)=\cosh(r_p)\sin\left(\frac{\tilde{\gamma}}{2}\right)\\
\cosh(r_p)=\sqrt{\frac{1}{2(1-\cos(\vartheta))}}\\
\tilde{\alpha}+\tilde{\beta}+\tilde{\gamma}+3\vartheta=2\pi
\end{array}\right.
\end{equation}
We want a solution that satisfies $r_p>0$, $\tilde{\alpha},\tilde{\beta},\tilde{\gamma}\in(0,\pi)$ and $\vartheta\in(0,\frac{\pi}{3})$. If we have such a solution to the system, we have four pieces that can be glued to form the surface $S$.

Under the conditions $r_p>0$, $\tilde{\alpha},\tilde{\beta}\in[0,\pi)$ and $\vartheta\in(0,\frac{\pi}{3})$, the system (\ref{sys1}) is equivalent to

\begin{equation}\label{sys2}
\left\{\begin{array}{l}
\sin\left(\frac{\tilde{\alpha}}{2}\right)=\frac{\cos(\alpha)}{\cosh(r_p)}\\
\sin\left(\frac{\tilde{\beta}}{2}\right)=\frac{\cos(\beta)}{\cosh(r_p)}\\
\cos(\vartheta)=1-\frac{1}{2\cosh^2(r_p)}\\
\tilde{\gamma}=2\pi-\tilde{\alpha}-\tilde{\beta}-3\vartheta\\
F_{\alpha,\beta,\gamma}(r_p)=0
\end{array}\right.\end{equation}
where
\begin{multline*}
f_{\alpha,\beta,\gamma}(x)=2\cos(\gamma)x^4-(3x^2-1)(\sqrt{x^2-\cos^2(\alpha)}\sqrt{x^2-\cos^2(\beta)}-\cos(\alpha)\cos(\beta))\\
-(x^2-1)\sqrt{4x^2-1}(\cos(\alpha)\sqrt{x^2-\cos^2(\beta)}+\cos(\beta)\sqrt{x^2-\cos^2(\alpha)})\end{multline*}
and $F_{\alpha,\beta,\gamma}=f_{\alpha,\beta,\gamma}\circ\cosh$.\\ \

When the triple $(\alpha,\beta,\gamma)$ is fixed and it is clear to which angles we are referring to, we will simply call $f$ (respectively $F$) the function $f_{\alpha,\beta,\gamma}$ (respectively $F_{\alpha,\beta,\gamma}$).

\begin{rmk}
The special role of $\gamma$ in the function $f_{\alpha,\beta,\gamma}$ follows from the choice to express $\tilde{\gamma}$ in terms of the other angles.
\end{rmk}
Suppose now that one angle, say $\alpha$, is a right angle. Then we have only three pieces obtained by cutting $S$ along the three loops. We have the following system:

\begin{equation}\label{sys1-right}
\left\{\begin{array}{l}
\cos(\beta)=\cosh(r_p)\sin\left(\frac{\tilde{\beta}}{2}\right)\\
\cos(\gamma)=\cosh(r_p)\sin\left(\frac{\tilde{\gamma}}{2}\right)\\
\cosh(r_p)=\sqrt{\frac{1}{2(1-\cos(\vartheta))}}\\
\tilde{\beta}+\tilde{\gamma}+3\vartheta=2\pi
\end{array}\right.
\end{equation}
Again, we want a solution that satisfies $r_p>0$, $,\tilde{\beta},\tilde{\gamma}\in(0,\pi)$ and $\vartheta\in(0,\frac{\pi}{3})$.
\begin{rmk} If we ask $r_p>0$, the equation
$$\cos(\alpha)=\cosh(r_p)\sin\left(\frac{\tilde{\alpha}}{2}\right)$$
has $\tilde{\alpha}=0$ as solution if and only if $\alpha=\frac{\pi}{2}$.
\end{rmk}
From the previous remark it follows that we can consider the same system (\ref{sys2}) for any surface. We look for a solution $r_p>0$, $\tilde{\alpha},\tilde{\beta},\tilde{\gamma}\in[0,\pi)$ and $\vartheta\in(0,\frac{\pi}{3})$.
\subsubsection{Existence and uniqueness of a solution}\ \\
We now prove the following:
\begin{prop}
There exists a unique solution to (\ref{sys2}) satisfying $r_p>0$, ${\tilde{\alpha},\tilde{\beta},\tilde{\gamma}\in[0,\pi)}$ and $\vartheta\in(0,\frac{\pi}{3})$.
\end{prop}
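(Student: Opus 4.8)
The plan is to show that the last equation $F_{\alpha,\beta,\gamma}(r_p)=0$ pins down a single value of $r_p$ and that the remaining unknowns are then forced, so that the whole system has exactly one admissible solution. The key observation is that the first four equations of (\ref{sys2}) express $\tilde\alpha$, $\tilde\beta$, $\vartheta$ and $\tilde\gamma$ as functions of $r_p$ alone: on the prescribed ranges the map $t\mapsto\sin t$ is injective on $[0,\pi/2)$ (which determines $\tilde\alpha$ and $\tilde\beta$), the map $\vartheta\mapsto\cos\vartheta$ is injective on $(0,\pi/3)$ (which determines $\vartheta$), and $\tilde\gamma$ is given explicitly by the fourth equation. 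Hence an admissible solution is determined by its value of $r_p$, and uniqueness of the solution reduces to uniqueness of $r_p>0$ (equivalently $\cosh r_p>1$) with $F_{\alpha,\beta,\gamma}(r_p)=0$ and induced $\tilde\gamma\in[0,\pi)$.

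For existence I would invoke the geometry already in place: by the proposition that the injectivity radius map admits a maximum, $S=S_{\alpha,\beta,\gamma}$ has a global maximum point $p^{*}$, and by Proposition \ref{3tg} the three loops at $p^{*}$ have common length $2r_{p^{*}}$. The derivation leading to (\ref{sys1}), together with its equivalence with (\ref{sys2}), then shows that the associated quintuple $(r_{p^{*}},\tilde\alpha,\tilde\beta,\tilde\gamma,\vartheta)$ is an admissible solution. Alternatively one can argue analytically: a short computation gives $f_{\alpha,\beta,\gamma}(1)=2\bigl(\cos\gamma+\cos(\alpha+\beta)\bigr)>0$, using $\alpha+\beta+\gamma<\pi$, while the leading coefficient shows $f_{\alpha,\beta,\gamma}(x)\to-\infty$ as $x\to\infty$, so $F_{\alpha,\beta,\gamma}$ has a zero by the intermediate value theorem.

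For uniqueness I would use the reconstruction noted before (\ref{sys1}): any admissible solution of (\ref{sys2}) determines three corner pieces, carrying cone points of angles $2\alpha$, $2\beta$, $2\gamma$, together with an equilateral triangle of side $2r_p$ and vertex angle $\vartheta$, and the identity $\tilde\alpha+\tilde\beta+\tilde\gamma+3\vartheta=2\pi$ allows these to glue along their common boundaries into a triangular surface $S'$ with a smooth interior point $p$. Since $S'$ has cone angles $2\alpha,2\beta,2\gamma$ and the moduli space $\mathcal{M}$ identifies a triangular surface with its triple of half-angles, $S'$ is isometric to $S_{\alpha,\beta,\gamma}$. By construction the three loops at $p$ have equal length $2r_p$, so Lemma \ref{minloops} gives $r_p$ as the injectivity radius at $p$ and Proposition \ref{3tg} makes $p$ a global maximum; hence $r_p=r(S')=r(S_{\alpha,\beta,\gamma})$. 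As this value is the same for every admissible solution, all solutions share the same $r_p$, and by the first paragraph they coincide; combined with existence, this gives exactly one admissible solution. The case where one angle equals $\frac{\pi}{2}$ is included automatically, since then $\cos\alpha=0$ forces $\tilde\alpha=0$ and the corresponding corner piece degenerates, as recorded in the remark preceding the proposition.

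The step I expect to be the crux is the reconstruction direction: verifying that every quintuple satisfying only the range constraints, rather than one known a priori to come from a genuine surface, really does glue up consistently into $S_{\alpha,\beta,\gamma}$ (matching boundary lengths and closing up to total angle $2\pi$ at $p$), so that its $r_p$ is forced to equal $r(S_{\alpha,\beta,\gamma})$. This geometric route is preferable to a direct analytic proof that $F_{\alpha,\beta,\gamma}$ has a single zero, which would require controlling the sign of $f_{\alpha,\beta,\gamma}'$ in spite of its nested radicals and looks considerably more delicate.
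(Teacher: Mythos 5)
Your proposal is correct and follows essentially the same route as the paper: existence of a zero of $F_{\alpha,\beta,\gamma}$ via the sign change between $x=1$ and $x\to\infty$ (your computation $f(1)=2(\cos\gamma+\cos(\alpha+\beta))>0$ makes explicit what the paper leaves as ``one can check''), determination of $\tilde\alpha,\tilde\beta,\vartheta,\tilde\gamma$ from $r_p$ alone, and uniqueness by reassembling the pieces into $S_{\alpha,\beta,\gamma}$ and invoking Proposition \ref{3tg} on the two global maximum points. The reconstruction step you flag as the crux is asserted with the same level of detail in the paper itself.
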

\begin{proof}
One can check that $f(1)>0$ and $\lim_{x\rightarrow +\infty}f(x)=-\infty.$ Since $f$ is continuous, it has a zero in $(1,+\infty)$, i.e.\ $F$ has a zero in $(0,+\infty)$.

To prove  that the whole system has a solution, we have to find non-negative angles satisfying the equations above. Given $r_p$ such that $F(r_p)=0$, there are unique $\frac{\tilde{\alpha}}{2},\frac{\tilde{\beta}}{2}\in\left[0,\pi/2\right)$ that satisfy
\begin{gather*}\sin\left(\frac{\tilde{\alpha}}{2}\right)=\frac{\cos(\alpha)}{\cosh(r_p)}\\
\sin\left(\frac{\tilde{\beta}}{2}\right)=\frac{\cos(\beta)}{\cosh(r_p)}.
\end{gather*}
Since
$$\cos(\vartheta)=1-\frac{1}{2\cosh^2(r_p)}\in\left(\frac{1}{2},1\right),$$
there is a unique solution $\vartheta\in\left(0,\frac{\pi}{3}\right)$. As
$$\tilde{\gamma}=2\pi-\tilde{\alpha}-\tilde{\beta}-3\vartheta\in (-\pi,2\pi),$$
we get $\frac{\tilde{\gamma}}{2}\in\left(-\frac{\pi}{2},\pi\right)$. Using the equivalence with system (\ref{sys1}), we know that
$$\sin\left(\frac{\tilde{\gamma}}{2}\right)=\frac{\cos(\gamma)}{\cosh(r_p)}\in[0,1),$$
thus $\tilde{\gamma}/2 \in [0,\pi)$. So, for every zero of $F$, we have a unique solution $(r_p,\tilde{\alpha},\tilde{\beta},\tilde{\gamma},\vartheta)$ to (\ref{sys2}) with the required conditions.

A solution gives us a radius and two associated centers, one for each triangle. We know, by Proposition \ref{3tg}, that there exist exactly two maximum points on $S$, which guarantees unicity of the solution of (\ref{sys2}).
\end{proof}
From the proof above we also see that $r(S)$ is the unique positive solution of $F=0$; for $r\geq 0$
$$F(r)>0 \Leftrightarrow r<r(S)$$ and $$F(r)<0 \Leftrightarrow r>r(S).$$
In terms of $f$, for $x\geq 1$ we have
$$f(x)>0 \Leftrightarrow x<\cosh(r(S))$$ and $$f(x)<0 \Leftrightarrow x>\cosh(r(S)).$$
\subsection{Continuity}\label{continuity}\ \\
In this section we want to show the continuity of $r(S_{\alpha,\beta,\gamma})$ with respect to the parameters $\alpha, \beta$ and $\gamma$.

Consider the function
\begin{align*}
\mathcal{F}:\mathcal{A}\times \mathbb{R}&\longrightarrow\mathbb{R}\\
(\alpha,\beta,\gamma,y)&\longmapsto \mathcal{F}(\alpha,\beta,\gamma,y):=F_{\alpha,\beta,\gamma}(y)
\end{align*}
where $\mathcal{A}$ is the set of possible triples of angles:
$$\mathcal{A}=\left\{(\alpha,\beta,\gamma)\in\left[0,\frac{\pi}{2}\right]^3 : \alpha+\beta+\gamma<\pi\right\}.$$
Note that $\mathcal{F}$ is continuous, as one can see from the explicit form of $F_{\alpha,\beta,\gamma}(y)$.
\begin{rmk}\label{maxradius}
Given a triangular surface $S$ and a point $p\in S$ with maximum injectivity radius, we have
$$\area\left( B_{r(S)}(p)\right) \leq \area(S)<2\pi.$$
So there is a constant $M>0$ such that $r(S)\leq M$ for all triangular surfaces.
\end{rmk}

\begin{prop}
The map
\begin{align*}
r(S_*):\mathcal{A}&\rightarrow \mathbb{R}\\
(\alpha,\beta,\gamma)&\mapsto r(S_{\alpha,\beta,\gamma})
\end{align*}
is continuous.
\end{prop}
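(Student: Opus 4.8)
The plan is to avoid the implicit function theorem---which would force us to control a derivative of $F_{\alpha,\beta,\gamma}$ and to worry about the square roots appearing in $f_{\alpha,\beta,\gamma}$---and instead to exploit directly the two facts established just above: that $r(S_{\alpha,\beta,\gamma})$ is the \emph{unique} positive zero of $F_{\alpha,\beta,\gamma}$, together with the strict sign change $F_{\alpha,\beta,\gamma}(r)>0 \Leftrightarrow r<r(S_{\alpha,\beta,\gamma})$ and $F_{\alpha,\beta,\gamma}(r)<0 \Leftrightarrow r>r(S_{\alpha,\beta,\gamma})$ for $r\ge 0$. Combined with the joint continuity of $\mathcal{F}$, this sign characterization lets one trap $r(S_{\alpha,\beta,\gamma})$ between two fixed radii by a squeeze argument, which is cleaner and cheaper than any differentiability hypothesis.

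Concretely, I would fix $(\alpha_0,\beta_0,\gamma_0)\in\mathcal{A}$, write $r_0:=r(S_{\alpha_0,\beta_0,\gamma_0})>0$, and let $\varepsilon>0$; reducing $\varepsilon$ if necessary we may assume $\varepsilon<r_0$, so that $r_0-\varepsilon>0$. By the sign characterization applied to $S_{\alpha_0,\beta_0,\gamma_0}$ we have $\mathcal{F}(\alpha_0,\beta_0,\gamma_0,r_0-\varepsilon)>0$ and $\mathcal{F}(\alpha_0,\beta_0,\gamma_0,r_0+\varepsilon)<0$. Since $\mathcal{F}$ is continuous, there is a neighborhood $U$ of $(\alpha_0,\beta_0,\gamma_0)$ in $\mathcal{A}$ on which both strict inequalities persist, that is $F_{\alpha,\beta,\gamma}(r_0-\varepsilon)>0$ and $F_{\alpha,\beta,\gamma}(r_0+\varepsilon)<0$ for every $(\alpha,\beta,\gamma)\in U$. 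Applying the sign characterization now in the other direction, this time to each surface $S_{\alpha,\beta,\gamma}$ with $(\alpha,\beta,\gamma)\in U$, the first inequality forces $r(S_{\alpha,\beta,\gamma})>r_0-\varepsilon$ and the second forces $r(S_{\alpha,\beta,\gamma})<r_0+\varepsilon$, so that $|r(S_{\alpha,\beta,\gamma})-r_0|<\varepsilon$. As $(\alpha_0,\beta_0,\gamma_0)$ and $\varepsilon$ were arbitrary, this yields continuity of $r(S_*)$ at every point of $\mathcal{A}$, including boundary points where an angle equals $0$ or $\frac{\pi}{2}$, since $\mathcal{F}$ is continuous there too and the single system (\ref{sys2}) governs all surfaces.

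The only point requiring care, and the one I would flag as the main (modest) obstacle, is the degenerate case where the chosen $\varepsilon$ is not smaller than $r_0$: then $r_0-\varepsilon\le 0$ and one cannot test the lower bound via the positivity of $F$. This is harmless, because $r(S_{\alpha,\beta,\gamma})$ is always a \emph{positive} zero, so $r(S_{\alpha,\beta,\gamma})>0\ge r_0-\varepsilon$ holds automatically and only the upper-bound half of the neighborhood argument is needed; alternatively one simply shrinks $\varepsilon$ at the outset, which costs nothing for a continuity statement. It is worth emphasizing that the whole argument hinges on the \emph{strict} sign change of $F_{\alpha,\beta,\gamma}$ at $r(S_{\alpha,\beta,\gamma})$: a merely tangential zero could jump discontinuously under perturbation, whereas a transversal crossing is stable under the control furnished by the continuity of $\mathcal{F}$. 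The uniform bound $r(S)\le M$ of Remark~\ref{maxradius} is not needed for this local argument, although it does confirm that the continuous function $r(S_*)$ has bounded image.
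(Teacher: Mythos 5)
Your argument is correct, and it reaches the conclusion by a genuinely different mechanism from the paper's. The paper proves sequential continuity: given $(\alpha_n,\beta_n,\gamma_n)\to(\alpha,\beta,\gamma)$, it invokes the uniform bound $r(S)\leq M$ of Remark \ref{maxradius} to place the radii in a compact interval, extracts a convergent subsequence with limit $y$, passes the equation $F_{\alpha_{n_k},\beta_{n_k},\gamma_{n_k}}(r(S_{\alpha_{n_k},\beta_{n_k},\gamma_{n_k}}))=0$ to the limit using the joint continuity of $\mathcal{F}$, and identifies $y=r(S_{\alpha,\beta,\gamma})$ by uniqueness of the positive zero; since every accumulation point coincides, the whole sequence converges. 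You instead run a direct $\varepsilon$-neighborhood argument that traps the zero between $r_0-\varepsilon$ and $r_0+\varepsilon$ using the strict sign change of $F_{\alpha,\beta,\gamma}$ at its unique positive root together with the openness of the conditions $\mathcal{F}>0$ and $\mathcal{F}<0$. Both routes rest on the same two ingredients (continuity of $\mathcal{F}$ and uniqueness of the positive zero), but yours dispenses with the a priori bound $M$ and with subsequence extraction, at the price of needing the full sign characterization $F(r)>0\Leftrightarrow r<r(S)$, $F(r)<0\Leftrightarrow r>r(S)$ rather than mere uniqueness of the root; the paper states exactly this characterization immediately after the existence-and-uniqueness proposition, so you are entitled to it, and your treatment of the degenerate case $\varepsilon\geq r_0$ (where the lower bound is automatic from positivity of the root) closes the only gap the squeeze could have had.
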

\begin{proof}To prove the continuity, we show that for every sequence $$\{(\alpha_n,\beta_n,\gamma_n)\}_{n=1}^{+\infty}\subseteq \mathcal{A}$$ converging to some triple $(\alpha,\beta,\gamma)\in \mathcal{A}$, the limit $\displaystyle \lim_{n\rightarrow +\infty}r(S_{\alpha_n,\beta_n,\gamma_n})$ exists and it is $r(S_{\alpha,\beta,\gamma})$.

From Remark \ref{maxradius}, we know that $\{r(S_{\alpha_n,\beta_n,\gamma_n})\}_{n=1}^{+\infty}$ is contained in the compact set $[O,M]$, so it has an accumulation point $y$. Then there exists a subsequence $$\{r(S_{\alpha_{n_k},\beta_{n_k},\gamma_{n_k}})\}_{k=1}^{+\infty}$$ converging to $y$. Since $y\geq 0$ and $r(S_{\alpha,\beta,\gamma})$ is the only positive zero of $F_{\alpha,\beta,\gamma}$, we have
$$y=r(S_{\alpha,\beta,\gamma}) \Leftrightarrow \mathcal{F}(\alpha,\beta,\gamma,y)=F_{\alpha,\beta,\gamma}(y)=0.$$
We can compute $\mathcal{F}(\alpha,\beta,\gamma,y)$:
\begin{gather*}
\mathcal{F}(\alpha,\beta,\gamma,y)=\mathcal{F}\left(\alpha,\beta,\gamma,\lim_{k\rightarrow +\infty}r(S_{\alpha_{n_k},\beta_{n_k},\gamma_{n_k}})\right)=\\ \mathcal{F}\left(\lim_{k\rightarrow +\infty}\left(\alpha_{n_k},\beta_{n_k},\gamma_{n_k},r(S_{\alpha_{n_k},\beta_{n_k},\gamma_{n_k}})\right)\right)\stackrel{(\star)}{=}\\ \lim_{k\rightarrow +\infty}\mathcal{F}\left(\alpha_{n_k},\beta_{n_k},\gamma_{n_k},r(S_{\alpha_{n_k},\beta_{n_k},\gamma_{n_k}})\right)=\\ \lim_{k\rightarrow +\infty}F_{\alpha_{n_k},\beta_{n_k},\gamma_{n_k}}\left(r(S_{\alpha_{n_k},\beta_{n_k},\gamma_{n_k}})\right),
\end{gather*}
where $(\star)$ follows from the continuity of $\mathcal{F}$. Now for every $k$
$$F_{\alpha_{n_k},\beta_{n_k},\gamma_{n_k}}\left(r(S_{\alpha_{n_k},\beta_{n_k},\gamma_{n_k}})\right)=0,$$
so the limit is zero too and $y=r(S_{\alpha,\beta,\gamma})$.

Thus, every accumulation point of $\{r(S_{\alpha_n,\beta_n,\gamma_n})\}_{n=1}^{+\infty}$ is $r(S_{\alpha,\beta,\gamma})$, so the sequence converges and its limit is $r(S_{\alpha,\beta,\gamma})$.
\end{proof}

\subsection{The minimum among triangular orbifolds}\ \\
We will call {\it triangular orbifold} an orbifold of signature $(0,3)$. We want to find which one has smallest maximum injectivity radius.

\noindent {\sc Notation:} we will write $(\alpha,\beta,\gamma)\leq(\alpha',\beta',\gamma')$ for $\alpha\leq\alpha',\beta\leq \beta'$ and $\gamma\leq\gamma'$.

\begin{lemma}\label{angles}
If $(\alpha,\beta,\gamma)\leq(\alpha',\beta',\gamma')$, then $r(S_{\alpha,\beta,\gamma})\geq r(S_{\alpha',\beta',\gamma'})$ with equality if and only if $(\alpha,\beta,\gamma)=(\alpha',\beta',\gamma')$.
\end{lemma}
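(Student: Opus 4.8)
The plan is to reduce the comparison of maximum injectivity radii to a comparison of the zeros of the functions $f_{\alpha,\beta,\gamma}$, and then to exploit the fact that $\gamma$ enters the formula for $f_{\alpha,\beta,\gamma}$ in a trivial way. Recall from the discussion following the existence-and-uniqueness proposition that, for $x\geq 1$, one has $f_{\alpha,\beta,\gamma}(x)>0$ exactly when $x<\cosh(r(S_{\alpha,\beta,\gamma}))$ and $f_{\alpha,\beta,\gamma}(x)<0$ exactly when $x>\cosh(r(S_{\alpha,\beta,\gamma}))$; in particular $\cosh(r(S_{\alpha,\beta,\gamma}))$ is the unique zero of $f_{\alpha,\beta,\gamma}$ in $[1,+\infty)$. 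Since $\cosh$ is increasing, it therefore suffices to compare these zeros, and this can be done by evaluating one function at the other's zero.

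The key structural observation is that in the explicit expression for $f_{\alpha,\beta,\gamma}$ the angle $\gamma$ occurs only in the single term $2\cos(\gamma)x^4$, while all remaining terms depend on $\alpha$ and $\beta$ alone. Consequently, for fixed $\alpha,\beta$ and fixed $x\geq 1$, increasing $\gamma$ strictly decreases $f_{\alpha,\beta,\gamma}(x)$, because $\cos$ is strictly decreasing on $[0,\frac{\pi}{2}]$ and $x^4>0$. Concretely, I would first treat the case where only the third angle changes: suppose $\gamma<\gamma'$ with $\alpha,\beta$ unchanged, set $x^{\ast}:=\cosh(r(S_{\alpha,\beta,\gamma}))$, and, using $f_{\alpha,\beta,\gamma}(x^{\ast})=0$, compute
$$f_{\alpha,\beta,\gamma'}(x^{\ast})=f_{\alpha,\beta,\gamma}(x^{\ast})+2\big(\cos\gamma'-\cos\gamma\big)(x^{\ast})^4=2\big(\cos\gamma'-\cos\gamma\big)(x^{\ast})^4<0.$$
By the sign characterization applied to the triple $(\alpha,\beta,\gamma')$ this forces $x^{\ast}>\cosh(r(S_{\alpha,\beta,\gamma'}))$, hence $r(S_{\alpha,\beta,\gamma})>r(S_{\alpha,\beta,\gamma'})$; and if $\gamma=\gamma'$ the two radii obviously coincide.

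To obtain the general statement I would increase the coordinates one at a time along the chain $(\alpha,\beta,\gamma)\to(\alpha,\beta,\gamma')\to(\alpha,\beta',\gamma')\to(\alpha',\beta',\gamma')$, noting that each intermediate triple still lies in $\mathcal{A}$ since its coordinates are trapped between those of the two endpoints (so each coordinate stays in $[0,\frac{\pi}{2}]$ and the angle sum stays below $\pi$). The single-coordinate step above handles the change in the third slot directly; for the steps that change $\alpha$ or $\beta$ I would first use that the radius $r(S_{\alpha,\beta,\gamma})$ depends only on the unordered triple (the moduli space being the quotient by $\mathrm{Sym}(3)$) to reorder the angles so that the coordinate being increased occupies the privileged third slot, and then apply the same estimate. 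Each step weakly decreases the maximum injectivity radius, and strictly so whenever the coordinate in question strictly increases; chaining them gives $r(S_{\alpha,\beta,\gamma})\geq r(S_{\alpha',\beta',\gamma'})$, with equality precisely when no coordinate strictly increased, i.e.\ when $(\alpha,\beta,\gamma)=(\alpha',\beta',\gamma')$.

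The only real subtlety, and hence the point I would be most careful about, is the asymmetry of $f_{\alpha,\beta,\gamma}$: a head-on comparison of $f_{\alpha,\beta,\gamma}$ and $f_{\alpha',\beta',\gamma'}$ with all three angles moving simultaneously would entangle the awkward square-root terms in $\alpha$ and $\beta$. The device that sidesteps this is to change only the distinguished angle $\gamma$ at each stage and to invoke the symmetry of $r$ to bring whichever angle is being increased into that slot, so that the entire argument rests on the trivial monotonicity of the term $2\cos(\gamma)x^4$.
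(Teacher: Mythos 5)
Your proof is correct and follows essentially the same route as the paper's: both reduce to single-coordinate changes along a chain of intermediate triples and exploit the fact that the distinguished angle enters $f_{\alpha,\beta,\gamma}$ only through the term $2\cos(\gamma)x^4$, together with the sign characterization of $F$ around its unique positive zero. Your version is in fact slightly more explicit than the paper's on two points it leaves implicit — invoking the $\mathrm{Sym}(3)$-invariance of $r(S_{\alpha,\beta,\gamma})$ to handle the steps that change $\alpha$ or $\beta$, and tracking strictness to get the equality case — so there is nothing to fix.
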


\begin{proof}
We pass from $(\alpha',\beta',\gamma')$ to $(\alpha,\beta,\gamma)$ in the following way
$$(\alpha',\beta',\gamma')\rightarrow (\alpha',\beta',\gamma)\rightarrow(\alpha',\beta,\gamma)\rightarrow(\alpha,\beta,\gamma).$$
and show that in any passage the radius decreases. Consider the first step. For all $x\geq 0$:
$$F_{\alpha',\beta',\gamma'}(x)-F_{\alpha',\beta',\gamma}(x)=2(\cos\gamma'-\cos\gamma)\cosh(x)^4\geq 0$$
because $\gamma'\leq \gamma$. So $F_{\alpha',\beta',\gamma'}(x)\geq F_{\alpha',\beta',\gamma}(x)$. Since $r(S_{\alpha',\beta',\gamma'})$ (resp.\ $r(S_{\alpha',\beta',\gamma})$) is the only solution in $(0,+\infty)$ of $F_{\alpha',\beta',\gamma'}$ (resp.\ $F_{\alpha',\beta',\gamma}$), we have $$r(S_{\alpha',\beta',\gamma'})\leq r(S_{\alpha',\beta',\gamma})$$
The same argument applied to all the steps shows that $$r(S_{(\alpha,\beta,\gamma)})\geq r(S_{(\alpha',\beta',\gamma')}).$$
\end{proof}

\begin{lemma}\label{tripleleq}
Given any triple of angles $\alpha,\beta,\gamma$ corresponding to a triangular surface $S_{\alpha,\beta,\gamma}$, $(\alpha,\beta,\gamma)$ is less or equal to (at least) one triple among $(\pi/2,\pi/3,\pi/7)$, $(\pi/2,\pi/4,\pi/5)$ and $(\pi/3,\pi/3,\pi/4)$.
\end{lemma}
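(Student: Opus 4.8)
The plan is to exploit the arithmetic constraint that, for a triangular \emph{orbifold} (the subject of this subsection), each angle has the special form $\pi/k$. Concretely, each singular point of $S_{\alpha,\beta,\gamma}$ contributes an angle $\pi/k$ for an integer $k\geq 2$, with the convention $k=\infty$ (angle $0$) for a cusp. Using the $\mathrm{Sym}(3)$-symmetry of $\mathcal{M}$ we may relabel so that $\alpha=\pi/p$, $\beta=\pi/q$, $\gamma=\pi/r$ with $2\leq p\leq q\leq r\leq\infty$, i.e.\ $\alpha\geq\beta\geq\gamma$. Hyperbolicity of the triangle is exactly the condition $\tfrac1p+\tfrac1q+\tfrac1r<1$. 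I would sort the three target triples the same way, as $(\pi/2,\pi/3,\pi/7)$, $(\pi/2,\pi/4,\pi/5)$ and $(\pi/3,\pi/3,\pi/4)$, and note that they correspond to the order-triples $(2,3,7)$, $(2,4,5)$, $(3,3,4)$ sitting just inside the Euclidean locus $\tfrac1p+\tfrac1q+\tfrac1r=1$; indeed these are exactly the hyperbolic order-triples from which no order can be lowered without leaving the hyperbolic region. Since raising an order only shrinks the corresponding angle $\pi/k$, every hyperbolic triple should lie below one of these three. The proof is then a short case analysis on the smallest order $p$.

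First I would treat the case $p\geq 3$, so $\alpha,\beta\leq\pi/3$. Here the only thing to check is the bound on $\gamma$: if $p=q=3$, then $\tfrac13+\tfrac13+\tfrac1r<1$ forces $r\geq 4$, hence $\gamma\leq\pi/4$; and if $q\geq 4$, then $\gamma=\pi/r\leq\pi/q\leq\pi/4$ directly. In either subcase $\alpha\leq\pi/3$, $\beta\leq\pi/3$ and $\gamma\leq\pi/4$, so $(\alpha,\beta,\gamma)\leq(\pi/3,\pi/3,\pi/4)$.

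Next I would treat the case $p=2$, so $\alpha=\pi/2$ and $\tfrac1q+\tfrac1r<\tfrac12$, which forces $q\geq 3$. If $q=3$ then $\tfrac1r<\tfrac16$ gives $r\geq 7$, so $\gamma\leq\pi/7$ and $(\alpha,\beta,\gamma)\leq(\pi/2,\pi/3,\pi/7)$. If $q\geq 4$ then $\beta\leq\pi/4$; moreover $q=4$ forces $\tfrac1r<\tfrac14$, i.e.\ $r\geq 5$, while $q\geq 5$ already gives $r\geq q\geq 5$, so in both situations $\gamma\leq\pi/5$ and $(\alpha,\beta,\gamma)\leq(\pi/2,\pi/4,\pi/5)$. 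This exhausts all possibilities.

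The only thing to get right is the bookkeeping: at each branch one must verify that hyperbolicity together with the \emph{integrality} of $p,q,r$ yields precisely the sharp lower bound ($r\geq 7$, $r\geq 5$, or $r\geq 4$) matching the third coordinate of the chosen target, while the first two coordinates are comparable for free. There is no genuine analytic obstacle; the content is the elementary fact that $(2,3,7)$, $(2,4,5)$ and $(3,3,4)$ are the minimal hyperbolic triangle orbifolds. Together with the monotonicity of Lemma~\ref{angles}, this lemma is exactly what reduces the search for the minimum of $r(S)$ over all triangular orbifolds to a comparison of the three explicit values $r(S_{\pi/2,\pi/3,\pi/7})$, $r(S_{\pi/2,\pi/4,\pi/5})$ and $r(S_{\pi/3,\pi/3,\pi/4})$.
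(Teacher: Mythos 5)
Your proof is correct and follows essentially the same case analysis as the paper's (splitting on whether the largest angle equals $\pi/2$, then bounding the remaining orders using hyperbolicity plus integrality). The only difference is cosmetic: you make explicit the restriction to orbifold triples $\alpha=\pi/p$, $\beta=\pi/q$, $\gamma=\pi/r$ with integer (or infinite) orders, which the paper's statement leaves implicit but its proof also relies on.
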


\begin{proof}
Suppose $\alpha\geq\beta\geq\gamma$. By definition, all the angles are between $\pi/2$ and $0$. Since $\alpha+\beta+\gamma<\pi$, either $\alpha=\pi/2$ and $\beta,\gamma<\pi/2$, or $\alpha <\pi/2$.

If $\alpha=\pi/2$, the condition $\alpha+\beta+\gamma<\pi$ implies that $\beta\leq \pi/3$. If $\beta=\pi/3$, then $\gamma$ should be at most $\pi/7$, hence $(\alpha,\beta,\gamma)\leq(\pi/2,\pi/3,\pi/7)$. If $\beta<\pi/3$, i.e.\ $\beta\leq \pi/4$, then either $\beta=\pi/4$ and $\gamma\leq \pi/5$, or $\gamma\leq\beta\leq\pi/5$. In both situations, $(\alpha,\beta,\gamma)\leq(\pi/2,\pi/4,\pi/5)$.

If $\alpha<\pi/2$, then $\alpha,\beta,\gamma\leq\pi/3$. Moreover, since $\alpha+\beta+\gamma<\pi$ and $\gamma$ is the smallest angle, then $\gamma \leq\pi/4$. So $(\alpha,\beta,\gamma)\leq(\pi/3,\pi/3,\pi/4)$, and this ends the proof.
\end{proof}

From Lemmas \ref{angles} and \ref{tripleleq}, it follows that it suffices to compare $S_{\frac{\pi}{2},\frac{\pi}{3},\frac{\pi}{7}}$, $S_{\frac{\pi}{2},\frac{\pi}{4},\frac{\pi}{5}}$ and $S_{\frac{\pi}{3},\frac{\pi}{3},\frac{\pi}{4}}$ to find the triangular orbifold with smallest maximum injectivity radius.

To prove that $S_{\frac{\pi}{2},\frac{\pi}{3},\frac{\pi}{7}}$ is minimal, we first compute its maximum injectivity radius. We know that $\cosh\left(r(S_{\frac{\pi}{2},\frac{\pi}{3},\frac{\pi}{7}})\right)$ is the only solution (bigger than $1$) of $$F_{\frac{\pi}{2},\frac{\pi}{3},\frac{\pi}{7}}(x)=0,$$
from which we obtain
$$\cosh\left(r(S_{\frac{\pi}{2},\frac{\pi}{3},\frac{\pi}{7}})\right)=\sqrt{t_0},$$
where $t_0$ is the unique real solution of
$$\left(4-\cos^2\left(\frac{\pi}{7}\right)\right)t^3-5t^2+2t-\frac{1}{4}=0.$$
One can check that $$F_{\frac{\pi}{2},\frac{\pi}{4},\frac{\pi}{5}}(r(S_{\frac{\pi}{2},\frac{\pi}{3},\frac{\pi}{7}}))<0 \mbox{  and } F_{\frac{\pi}{3},\frac{\pi}{3},\frac{\pi}{4}}(r(S_{\frac{\pi}{2},\frac{\pi}{3},\frac{\pi}{7}}))<0,$$
so
$$r(S_{\frac{\pi}{2},\frac{\pi}{4},\frac{\pi}{5}})>r(S_{\frac{\pi}{2},\frac{\pi}{3},\frac{\pi}{7}}) \mbox{  and } r(S_{\frac{\pi}{3},\frac{\pi}{3},\frac{\pi}{4}})>r(S_{\frac{\pi}{2},\frac{\pi}{3},\frac{\pi}{7}}).$$

We have proven the following:
\begin{prop}\label{mintriang}
For every triangular orbifold $S$, $r(S)\geq r(S_{\frac{\pi}{2},\frac{\pi}{3},\frac{\pi}{7}})$, with equality if and only if $S=S_{\frac{\pi}{2},\frac{\pi}{3},\frac{\pi}{7}}$.
\end{prop}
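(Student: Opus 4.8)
The plan is to leverage the two preceding lemmas to collapse the whole moduli space $\mathcal{M}$ down to a finite comparison, and then to read off both the inequality and the equality case from the sign characterization of $F$ obtained in Section \ref{computations}. Write $T_1=(\pi/2,\pi/3,\pi/7)$, $T_2=(\pi/2,\pi/4,\pi/5)$ and $T_3=(\pi/3,\pi/3,\pi/4)$. First I would order the angles of an arbitrary triangular orbifold $S=S_{\alpha,\beta,\gamma}$ (legitimate since $r$ is $\mathrm{Sym}(3)$-invariant) and invoke Lemma \ref{tripleleq} to find some $i$ with $(\alpha,\beta,\gamma)\leq T_i$. Lemma \ref{angles} then yields $r(S)\geq r(S_{T_i})$, so once we know that $r(S_{T_1})$ is the smallest of the three values $r(S_{T_1}),r(S_{T_2}),r(S_{T_3})$ we obtain $r(S)\geq r(S_{T_1})$ in every case.

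The three-way comparison is where the real content sits. Here I would use that $r(S)$ is the unique positive zero of $F_{\alpha,\beta,\gamma}$, together with the dichotomy $F(r)<0\Leftrightarrow r>r(S)$. Thus it suffices to evaluate $F_{T_2}$ and $F_{T_3}$ at the single value $r(S_{T_1})$ and check that both are negative. Computing $r(S_{T_1})$ reduces, after setting $\cosh^2(r)=t$ and clearing the square roots in $f_{\pi/2,\pi/3,\pi/7}$, to the cubic $(4-\cos^2(\pi/7))t^3-5t^2+2t-\tfrac14=0$, whose relevant root $t_0$ gives $\cosh(r(S_{T_1}))=\sqrt{t_0}$. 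Substituting $x=\sqrt{t_0}$ into $f_{T_2}$ and $f_{T_3}$ and verifying negativity of both then finishes the inequality.

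For the equality clause I would track strictness through the same chain. If the dominating index is $i=2$ or $i=3$, then $r(S)\geq r(S_{T_i})>r(S_{T_1})$ is already strict; if $i=1$, Lemma \ref{angles} gives equality only when $(\alpha,\beta,\gamma)=T_1$. Hence $r(S)=r(S_{T_1})$ forces $S=S_{\pi/2,\pi/3,\pi/7}$, and the converse is immediate, giving the ``if and only if''.

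The main obstacle is the certified sign evaluation of $F_{T_2}$ and $F_{T_3}$ at $r(S_{T_1})$. The functions $f_{\alpha,\beta,\gamma}$ carry nested square roots, so one cannot simply reduce the expressions modulo the defining cubic of $t_0$; I would either bound $t_0$ tightly enough by interval arithmetic to certify each sign, or rationalise the radicals and compare the resulting polynomial conditions against the cubic by a resultant computation, so that the negativity of $F_{T_2}$ and $F_{T_3}$ becomes an exact algebraic statement rather than a numerical estimate.
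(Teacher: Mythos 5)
Your proposal follows the paper's proof essentially verbatim: reduce via Lemma \ref{tripleleq} to the three triples $(\pi/2,\pi/3,\pi/7)$, $(\pi/2,\pi/4,\pi/5)$, $(\pi/3,\pi/3,\pi/4)$, apply the monotonicity Lemma \ref{angles}, compute $\cosh(r(S_{\frac{\pi}{2},\frac{\pi}{3},\frac{\pi}{7}}))=\sqrt{t_0}$ from the same cubic, and certify $F_{\frac{\pi}{2},\frac{\pi}{4},\frac{\pi}{5}}$ and $F_{\frac{\pi}{3},\frac{\pi}{3},\frac{\pi}{4}}$ negative at that value to conclude via the sign dichotomy, with the equality case tracked through the strictness clause of Lemma \ref{angles} exactly as the paper does. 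Your closing remark on certifying the sign evaluations algebraically is a reasonable elaboration of the paper's ``one can check'' but does not change the argument.
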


We define $\rho_T$ to be $r\left(S_{\frac{\pi}{2},\frac{\pi}{3},\frac{\pi}{7}}\right)$. So
$$\rho_T=\arccosh\sqrt{t_0}$$
where $t_0$ is the unique real root of
$$\left(4-\cos^2\left(\frac{\pi}{7}\right)\right)t^3-5t^2+2t-\frac{1}{4}=0$$
and numerically
$$\rho_T\approx 0.187728.$$

\section{Orbifolds of signature $(g,n)\neq (0,3)$}\label{otherorbifolds}

Our goal now is to show that $\rho_T<r(S)$ for every non-triangular orbifold $S$. We will use the fact that Y-pieces and triangles of large enough area contain disks of radius bigger than $\rho_T$. We show that most of the non-triangular orbifolds contain a Y-piece or a big enough triangle and we analyze the remaining cases separately.

\subsection{Finding Y-pieces}\ \\
We use the following well known result:
\begin{prop}
Every open Y-piece contains two closed disks of radius $\rho_Y=\frac{\log 3}{2}$.
\end{prop}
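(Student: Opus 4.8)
The plan is to exploit the mirror symmetry that every Y-piece $Y$ possesses. Denote by $\ell_1,\ell_2,\ell_3>0$ the lengths of its three boundary geodesics. There is an orientation-reversing isometric involution $\sigma$ of $Y$ whose fixed-point set is the union of the three seams (the common perpendiculars between pairs of boundary geodesics), and cutting $Y$ along these seams produces two congruent right-angled hexagons $H$ and $\sigma(H)$, each having as alternate sides the three seams and the three arcs of length $\ell_i/2$. I would locate inside $H$ the incenter $p$, i.e.\ the point equidistant from the three boundary arcs, and take $p$ together with its mirror image $\sigma(p)$ as the centers of the two desired disks. With this choice it suffices to prove the single pair of estimates $\dist(p,\partial Y)\geq \rho_Y$ and $\dist(p,\mathrm{Fix}(\sigma))\geq \rho_Y$: the first guarantees that each closed disk $B_{\rho_Y}(p)$ and $B_{\rho_Y}(\sigma(p))$ stays away from the boundary, while the second, since $d(p,\sigma(p))=2\,\dist(p,\mathrm{Fix}(\sigma))$, guarantees that the two disks are disjoint; in the genus-zero piece $Y$ a metric disk fails to embed only by wrapping across a seam, so the second estimate also forces each disk to be embedded.

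The heart of the argument is a monotonicity statement: both $\dist(p,\partial Y)$ and $\dist(p,\mathrm{Fix}(\sigma))$ are non-decreasing functions of each boundary length $\ell_i$, so that their minimum over all Y-pieces is attained in the degenerate limit $\ell_1,\ell_2,\ell_3\to 0$. In that limit $Y$ converges to the thrice-punctured sphere, which is the double of an ideal triangle across its three sides, and the incenter converges to the incenter of the ideal triangle. A direct computation in the upper half-plane, placing the ideal vertices at $-1$, $1$ and $\infty$ so that the incenter sits at $i\sqrt{3}$, gives the inradius of an ideal triangle as $\log\sqrt{3}=\tfrac{\log 3}{2}=\rho_Y$; in this limit all three seams become simultaneously tangent to the inscribed circle, so the two bounds degenerate to the same equality. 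This pins down $\rho_Y$ as the extremal value and makes both required estimates equalities exactly in the cusped limit.

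I would then carry out the monotonicity via the standard right-angled hexagon relations, expressing $\dist(p,\partial Y)$ and $\dist(p,\mathrm{Fix}(\sigma))$ through the half-lengths $\ell_i/2$ and the seam lengths and checking that each is increasing in the $\ell_i$; combined with the ideal-triangle value this forces both distances to be at least $\rho_Y$. The main obstacle is precisely this monotonicity: the incenter $p$ itself moves as the $\ell_i$ vary, so the estimate is not pointwise but requires tracking $p$ through the hexagon trigonometry, or equivalently a comparison argument showing that enlarging the boundary geodesics only pushes the three bisecting geodesics (whose intersection is $p$) apart. Once monotonicity and the ideal-triangle computation are established, doubling $H$ produces the two disjoint embedded closed disks of radius $\rho_Y$ and the proof is complete.
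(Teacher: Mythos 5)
First, a remark on the comparison itself: the paper offers no proof of this proposition at all --- it is quoted as a ``well known result'' and used as a black box --- so there is no in-paper argument to measure yours against. Your skeleton is the natural one for a direct proof: cut $Y$ along the seams into two right-angled hexagons, place one disk at a distinguished point $p$ of each hexagon, and reduce everything to the two estimates $\dist(p,\partial Y)>\rho_Y$ and $\dist(p,\mathrm{Fix}(\sigma))>\rho_Y$; the reduction (a homotopically non-trivial loop at $p$ must cross the seams at least twice, and $d(p,\sigma(p))=2\dist(p,\mathrm{Fix}(\sigma))$) is sound, and the ideal-triangle computation giving $\log\sqrt{3}$ is correct.

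The genuine gap is the monotonicity claim, which is false for one of your two quantities, and with it the identification of the extremal configuration. From the right-angled hexagon relation
$$\cosh(b_i)=\frac{\cosh(\ell_i/2)+\cosh(\ell_j/2)\cosh(\ell_k/2)}{\sinh(\ell_j/2)\sinh(\ell_k/2)},$$
the seam lengths $b_i$ tend to $\infty$ as the $\ell$'s tend to $0$ and to $0$ as they tend to $\infty$: the two distances therefore move in \emph{opposite} directions. As $\ell_1,\ell_2,\ell_3\to 0$ the seams converge to the sides of an ideal triangle and $\dist(p,\mathrm{Fix}(\sigma))$ does decrease to the inradius $\log\sqrt{3}$, but in that same limit $\dist(p,\partial Y)\to+\infty$, since the boundary arcs recede into the cusps. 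It is at the \emph{opposite} end of moduli space, $\ell_i\to\infty$, that $\dist(p,\partial Y)$ approaches $\log\sqrt{3}$: there the three complete geodesics carrying the boundary sides become pairwise asymptotic and again bound an ideal triangle. So the assertion that both quantities are non-decreasing in the $\ell_i$ with common minimum at $\ell_i\to 0$, and the picture of ``all three seams simultaneously tangent to the inscribed circle'' in that limit, cannot be carried out as described; an argument built on it would prove the wrong inequality for $\dist(p,\partial Y)$. The bound $\dist(p,\partial Y)>\log\sqrt{3}$ is true, but it requires the reverse comparison (the circle inscribed in three pairwise disjoint complete geodesics is strictly larger than the one inscribed in three pairwise asymptotic ones), and the two estimates must be handled by separate, oppositely-oriented degeneration arguments. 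A secondary point: non-strict monotonicity only yields $\geq\rho_Y$, while placing \emph{closed} disks inside the \emph{open} Y-piece, embedded and pairwise disjoint, needs strict inequalities; since both extremal configurations are degenerate limits that no actual Y-piece attains, strictness does follow, but your write-up should say why.
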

One can check that $F_{\frac{\pi}{2},\frac{\pi}{3},\frac{\pi}{7}}(\rho_Y)<0$, hence $\rho_Y>\rho_T$. As a consequence, if we find an embedded open Y-piece in a surface $S$, we know that $r(S)>\rho_T$.
\begin{prop}\label{pants}
Let $S$ be an orbifold of signature $(g,n)$. Then $S$ contains an embedded open Y-piece if and only if $g>0$ and $3g+n\geq 5$ or $g=0$ and $n\geq 6$.
\end{prop}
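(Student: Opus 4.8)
The plan is to convert the existence of an embedded Y-piece into a purely combinatorial statement about pants decompositions and then count. The first step is the elementary equivalence: $S$ contains an embedded open Y-piece if and only if some pants decomposition of $S$ has a Y-piece among its complementary pieces. Indeed, if $P$ is an embedded Y-piece, its three boundary geodesics form a set of pairwise disjoint admissible geodesics, which by Proposition~\ref{pantsdecomposition} extends to a full pants decomposition $\mathcal{P}$; since every essential simple closed curve in the interior of a pair of pants is boundary-parallel, no further curve of $\mathcal{P}$ meets the interior of $P$, so $P$ is one of the pieces of $\mathcal{P}$. The converse is immediate. I would also record here that this allows the degenerate situation where two of the three cuffs of $P$ are glued to one another along a single geodesic of $S$; the subsurface is still an embedded open pair of pants avoiding the cone points.

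Next I would attach to a pants decomposition $\mathcal{P}$ its dual graph $G$: one vertex per piece, each carrying three half-edges (the cuffs), where two cuffs glued along a curve of $\mathcal{P}$ form an internal edge (possibly a loop) and a cuff ending at a singular point is a pendant. By Proposition~\ref{pantsdecomposition} there are $C=3g-3+n$ curves, and a cuff count gives $N=2g-2+n$ pieces; the graph $G$ is connected with first Betti number $b_1(G)=C-N+1=g$ and exactly $n$ pendants. A piece is a Y-piece precisely when its vertex is pendant-free, so the question becomes: for which $(g,n)$ can $G$ have a pendant-free vertex? Writing $p_k$ for the number of vertices with exactly $k$ pendants, the identities $\sum_k p_k=N$ and $\sum_k k\,p_k=n$ yield $p_0\ge N-n=2g-2$. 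Hence for $g\ge 2$ \emph{every} pants decomposition already has at least $2g-2\ge 2$ Y-pieces, and since $3g+n\ge 6$ the stated condition holds; this disposes of all $g\ge 2$ at once.

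For $g=1$ the graph has $N=n$ vertices and $b_1=1$: if $n=0$ there are no pieces, and if $n=1$ the unique vertex must carry both the loop forced by $b_1=1$ and the single pendant, so it is never pendant-free; conversely for $n\ge 2$ one exhibits $G$ directly, e.g.\ a pendant-free vertex with one self-loop and one edge to a tree of $n-1$ pieces distributing the $n$ pendants, realizing signature $(1,n)$. For $g=0$ the graph $G$ is a tree on $N=n-2$ vertices, and a pendant-free vertex is exactly a vertex of tree-degree $3$; as a tree with a degree-$3$ vertex needs three distinct neighbours it has at least $4$ vertices, forcing $n\ge 6$, while for $n\ge 6$ the star $K_{1,3}$ with outer pieces carrying $2,2,n-4$ pendants provides the decomposition. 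Matching these cases against the statement finishes the argument.

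On the realization side I would emphasize that containing an embedded Y-piece depends only on the topological type $\Sigma_{g,n}$: from the combinatorial graph one draws the corresponding disjoint essential curves and passes to their geodesic representatives via Propositions~\ref{geodrepresentatives} and~\ref{geodrepresentatives2}, which still bound an embedded open pair of pants; thus the conclusion holds for every orbifold of the given signature, independently of the cone orders. The point needing the most care is the genus-zero threshold: the crude cuff count would only suggest $n\ge 5$, and it is exactly the fact that a degree-$3$ vertex of a \emph{tree} requires four vertices that produces the sharp bound $n\ge 6$ and separates the $g=0$ case from the $g>0$ case. The remaining delicate step is bookkeeping the loop edges so that $b_1(G)=g$ and the pendant count stay correct in the small cases $g=1,\ n\le 1$, where the obstruction lives.
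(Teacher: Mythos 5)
Your proof rests on the same basic device as the paper's -- the dual graph of a pants decomposition, with Y-pieces appearing as vertices all of whose three cuffs are curves of the decomposition -- but the mechanism is genuinely different. The paper takes an \emph{arbitrary} decomposition and, when no degree-three vertex is present (a circuit in genus one, a line in genus zero), manufactures one by a Hatcher--Thurston elementary move; you instead prove for $g\ge 2$ that \emph{every} decomposition already has at least $2g-2$ Y-pieces by an Euler-characteristic count, and for $g\le 1$ you build the desired graph from scratch and realize it. The counting argument is a nice strengthening, but it is exactly where you need to be careful in this orbifold setting: the paper warns, immediately after Proposition \ref{pantsdecomposition}, that the number of complementary pieces is \emph{not} determined by the signature, because an admissible geodesic of the second type (the double of an arc joining two order-two cone points) swallows a joker's hat into a single curve. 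If the decomposition contains $m$ such curves, one gets $N=2g-2+n-m$ pieces and only $n-2m$ pendants, so both of your identities $\sum_k p_k=2g-2+n$ and $\sum_k kp_k=n$ fail in general. The inequality you actually use survives, since $p_0\ge N-(n-2m)=2g-2+m\ge 2g-2$, but as written the step is justified only for decompositions by first-type geodesics, and your reduction in the forward direction quantifies over all decompositions.

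Two smaller repairs. The explicit genus-zero realization breaks for $n\ge 7$: an outer vertex of the star $K_{1,3}$ has only two free cuffs, so it cannot carry $n-4>2$ pendants; you should instead take any trivalent tree on $n-2$ vertices with an interior vertex of tree-degree three (e.g.\ attach a path to one leaf of the star and spread the remaining pendants along it). And when you pass from the combinatorial curves to geodesic representatives, a cuff of your chosen pair of pants may itself become a second-type geodesic (precisely when the curve encircles two order-two cone points and nothing else), so the resulting piece is bounded by admissible geodesics rather than by three simple closed geodesics; the paper glosses over the same point, but since the payoff (the embedded disk of radius $\rho_Y$ in a Y-piece) is stated for honest Y-pieces, it deserves a sentence. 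With these adjustments the argument is correct and, for $g\ge2$, gives slightly more than the paper does.
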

\begin{proof}
$[\Rightarrow]$ To have an embedded Y-piece, we need:
\begin{itemize}
\item if $g>0$, at least two curves in a pants decomposition;
\item if $g=0$, at least three curves in a pants decomposition (since we cannot embed a one-holed torus).
\end{itemize}
Via Proposition \ref{pantsdecomposition}, the number of curves in a pants decomposition is $3g-3+n$, so we get the stated conditions.\\
$[\Leftarrow]$ Consider $S\setminus \Sigma$ as topological surface. We say that two pants decompositions $\mathcal{P}_1$ and $\mathcal{P}_2$ are joined by an elementary move (see \cite{hatcherthurston}) if $\mathcal{P}_2$ can be obtained from $\mathcal{P}_1$ by removing a single curve and replacing it by a curve that intersects it minimally.
\begin{figure}[h]
\begin{center}
\includegraphics{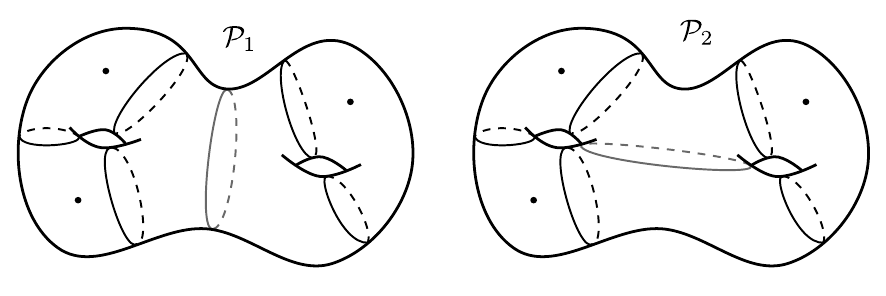}
\caption{Two pants decompositions joined by an elementary move}
\end{center}
\end{figure}
Given a pants decomposition, we can construct its dual graph $G$ (see \cite{buser}): vertices correspond to pairs of pants and there is an edge between two vertices if the corresponding pairs of pants share a boundary curve. In particular, there is a loop  at a vertex if two boundary curves of a pair of pants are glued to each other in the surface. The number of edges of $G$ is the number of curves in a pants decomposition, i.e.\ $3g-3+n$, and every vertex in $G$ has degree at most $3$. Suppose $v\in G$ is a vertex of degree $3$ and consider the associated pair of pants $P$. Passing to the geodesic representatives (via Propositions \ref{geodrepresentatives} and \ref{geodrepresentatives2}) of the boundary curves of $P$, we get a Y-piece, whose interior is embedded in $S$.

So, let us consider a surface $S$ satisfying $g>0$ and $3g+n\geq 5$ or $g=0$ and $n\geq 6$, a pants decomposition on it and its dual graph.

If $g\geq 2$, the graph has at least two circuits (that could be loops). The circuits are connected to each other, since $S$ is connected. So there is a vertex $v$ on one of the circuits connected to a vertex outside of the circuit. Thus the degree of $v$ is $3$ and, as seen before, we get an embedded open Y-piece.

If $g=1$, either the graph is a circuit with $3g-3+n\geq 2$ edges or it contains a circuit as a proper subgraph. In the first case, we can choose any edge and perform an elementary move on it to obtain a vertex of degree $3$. In the second case, there is a vertex $v$ on the circuit connected with a vertex outside the circuit, hence $\deg(v)=3$. Given a vertex of degree $3$ we have, as before, a Y-piece.
\begin{figure}[H]
\begin{center}
\includegraphics{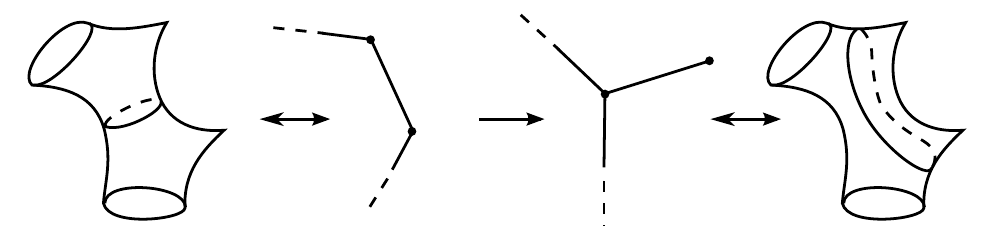}
\caption{An elementary move producing a vertex of degree $3$, for $g=1$}
\end{center}
\end{figure}
If $g=0$, the graph is a tree. If there is no vertex of degree three, the graph is a line with at least $3$ edges. Again we can perform an elementary move on an edge between two vertices of degree $2$ and get a vertex of degree $3$, so a Y-piece in $S$.
\end{proof}

\begin{cor}\label{withpants}
Every surface $S$ of genus $g$ with $n$ singular points such that $g>0$ and $3g+n\geq 5$ or $g=0$ and $n\geq 6$ satisfies $r(S)>\rho_T$.
\end{cor}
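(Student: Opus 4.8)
The plan is to deduce this directly from Proposition \ref{pants} together with the Y-piece disk bound recorded just above it. First I would observe that the hypotheses on $(g,n)$ in the corollary are \emph{exactly} the conditions appearing in Proposition \ref{pants}, so that proposition applies verbatim and yields an embedded open Y-piece $Y\subseteq S$. (A small point to flag: the corollary speaks of a ``surface'' with $n$ singular points, which in the terminology of this paper is an orbifold, so there is no mismatch with the hypotheses of Proposition \ref{pants}.)

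Next I would invoke the well-known fact stated above that every open Y-piece contains a closed disk of radius $\rho_Y=\frac{\log 3}{2}$. Since $Y$ is embedded in $S$, any such disk embeds isometrically in $S$ as well, so there is a point of $S$ at which an embedded disk of radius $\rho_Y$ is centered. By the definition of the maximum injectivity radius this gives $r(S)\geq \rho_Y$.

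Finally I would feed in the strict inequality $\rho_Y>\rho_T$, which is already available: the computation $F_{\frac{\pi}{2},\frac{\pi}{3},\frac{\pi}{7}}(\rho_Y)<0$, combined with the sign characterization $F(r)<0\Leftrightarrow r>r(S_{\frac{\pi}{2},\frac{\pi}{3},\frac{\pi}{7}})=\rho_T$, forces $\rho_Y>\rho_T$. Chaining the two inequalities gives $r(S)\geq\rho_Y>\rho_T$, which is the desired strict conclusion. Since each step is a direct application of a result already in hand, I do not expect any genuine obstacle here; the substantive work was carried out in Proposition \ref{pants}, and this corollary is simply its translation into a statement about injectivity radii.
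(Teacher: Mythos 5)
Your proposal is correct and is essentially the paper's own argument: the paper derives this corollary immediately from Proposition \ref{pants} together with the fact that an open Y-piece contains a closed disk of radius $\rho_Y=\frac{\log 3}{2}$ and the sign computation $F_{\frac{\pi}{2},\frac{\pi}{3},\frac{\pi}{7}}(\rho_Y)<0$ giving $\rho_Y>\rho_T$. The chain $r(S)\geq\rho_Y>\rho_T$ is exactly how the strict inequality is obtained there.
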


We are left with spheres with four or five singular points and tori with one singular point.

\subsection{Finding triangles in orbifolds of signature $(0,4)$ or $(0,5)$}\label{withtriangles}\ \\
Let $T$ be a hyperbolic triangle and $r(T)$ the radius of its inscribed disk, as in \cite[pp. 151--153]{beardon}. We know that
$$\tanh(r(T))\geq \frac{1}{2} \sin\left(\frac{1}{2} \area(T)\right).$$
It turns out that if $\area(T)\geq\frac{\pi}{4}$, we have $$F_{\frac{\pi}{2},\frac{\pi}{3},\frac{\pi}{7}}\left(\arctanh\left(\frac{1}{2} \sin\left(\frac{1}{2} \area(T)\right)\right)\right)<0,$$ so $r(T)>\rho_T$.

Given a sphere $S$ with $n$ singular points, $n=4$ or $5$, we will look for an embedded triangle of area at least $\frac{\pi}{4}$. Let $\frac{2\pi}{p_1},\dots\frac{2\pi}{p_n}$ be the angles at the singular points, where $p_i\geq 2$ is an integer or $p_i=\infty$ (i.e.\ the point is a cusp). Suppose $p_1\leq p_2\leq \dots \leq p_n$. The area of the surface is
$$\area(S)=(n-2)2\pi-\sum_{i=1}^{n}\frac{2\pi}{p_i}.$$
If we cut the surface into $2(n-2)$ triangles (by cutting it first into two $n$-gons and then cutting each polygon into $n-2$ triangles), the average area of the triangles is
$$\frac{\area(S)}{2(n-2)}=\pi\left(1-\frac{1}{n-2}\sum_{i=1}^n\frac{1}{p_i}\right).$$
This average is at least $\frac{\pi}{4}$ unless
\begin{enumerate}
\item $n=5$, $p_1=p_2=p_3=p_4=2$ and $p_5=2$ or $3$, or
\item $n=4$, $p_1=p_2=p_3=2$ and $p_4<\infty$ or $p_1=p_2=2$, $p_3=3$ and $p_4\leq 5$.
\end{enumerate}
So, if we are not in case (1) or (2), $S$ contains a triangle with area at least $\frac{\pi}{4}$, hence $r(S)>\rho_T$.

In case (1), cut the surface along geodesics as in the following picture:
\begin{figure}[H]
\begin{center}
\includegraphics{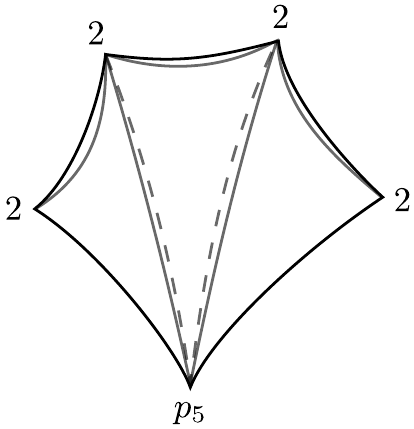}
\caption{A way to cut a $5$-punctured sphere into triangles}
\end{center}
\end{figure}
\noindent We obtain four triangles (if we cut open along a geodesic starting from a cone point of order two, we obtain an angle of $\pi$, so a side of a triangle). The average area of those triangles is
$$\frac{\area(S)}{4}=\frac{6\pi-\displaystyle\sum_{i=1}^{5}\frac{2\pi}{p_i}}{4}\geq \frac{\pi}{4},$$
so there is a triangle of area at least $\frac{\pi}{4}$, as desired.

In case (2), cut the surface along geodesics as in the following picture:
\begin{figure}[H]
\begin{center}
\includegraphics{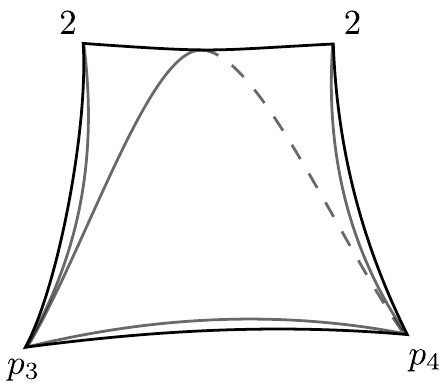}
\caption{A way to cut a $4$-punctured sphere into triangles}
\end{center}
\end{figure}
\noindent We have now two triangles and the average area is
$$\frac{\area(S)}{2}=\frac{4\pi-\displaystyle\sum_{i=1}^{4}\frac{2\pi}{p_i}}{2}\geq \pi-\frac{\pi}{p_3}-\frac{\pi}{p_4}.$$
If $p_3\geq 3$ or $p_4\geq 4$, this average is at least $\frac{\pi}{4}$ and we have a disk of large enough radius. The only case that is left is $p_1=p_2=p_3=2$ and $p_4=3$, which we will consider separately.

\subsection{Two special cases}\label{specialcases}\ \\
\noindent {\it First special case:} let $S$ be a sphere with three cone points of order $2$ and one of order $3$. Let $\tilde{S}_{\frac{\pi}{2},\frac{\pi}{3},\frac{\pi}{7}}$ be $S_{\frac{\pi}{2},\frac{\pi}{3},\frac{\pi}{7}}$ cut open along the side between the cone points of order $2$ and $7$. We want to embed $\tilde{S}_{\frac{\pi}{2},\frac{\pi}{3},\frac{\pi}{7}}$ into $S$.

We decompose $S$ into two isometric quadrilaterals by cutting along four geodesics between pairs of cone points. Each of the two quadrilaterals has three right angles and one angle $\frac{\pi}{3}$. We call $\lambda$ and $\mu$ the two sides which form the angle $\frac{\pi}{3}$ and $l$ (resp.\ $m$) the side opposite to $\lambda$ (resp.\ $\mu$). Using hyperbolic trigonometry, one can express $\lambda$ and $\mu$ as functions of $l$ and it is straightforward from the obtained expressions to deduce that $\lambda,\mu>\frac{\log 3}{2}$. Without loss of generality, assume $\lambda\geq\mu$. Since
$$\tanh(\lambda)\tanh(\mu)=\frac{1}{2},$$
we have 
$$\lambda\geq\arctanh\left(\frac{1}{\sqrt{2}}\right).$$
It is possible to check that 
\begin{gather*}
\frac{\log 3}{2}>c\\
\arctanh\left(\frac{1}{\sqrt{2}}\right)>a
\end{gather*}
where $a$, $b$ and $c$ are the sides opposite to $\frac{\pi}{2}$, $\frac{\pi}{3}$ and $\frac{\pi}{7}$ in one of the triangles forming $S_{\frac{\pi}{2},\frac{\pi}{3},\frac{\pi}{7}}$. So $\mu>c$ and $\lambda>a$ and we can embed the triangle in the quadrilateral by placing $B$ on the cone point of order three, $c$ on $\mu$ and $a$ on $\lambda$.

\begin{figure}[H]
\begin{center}
\includegraphics{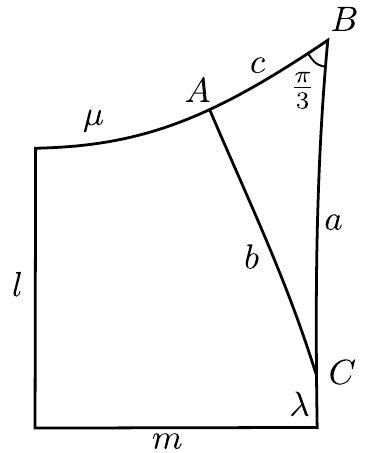}
\caption{The embedding of the triangle in the quadrilateral}
\end{center}
\end{figure}
By embedding in the same way another copy of the triangle in the other quadrilateral, we obtain the embedding of $\tilde{S}_{\frac{\pi}{2},\frac{\pi}{3},\frac{\pi}{7}}$ in $S$.

Given a point on $S_{\frac{\pi}{2},\frac{\pi}{3},\frac{\pi}{7}}$ that realizes the maximum injectivity radius, consider the corresponding point $p$ on $\tilde{S}_{\frac{\pi}{2},\frac{\pi}{3},\frac{\pi}{7}}\subseteq S$.
We know that $B_{\rho_T}(p)\cap \tilde{S}_{\frac{\pi}{2},\frac{\pi}{3},\frac{\pi}{7}}$ is embedded in $\tilde{S}_{\frac{\pi}{2},\frac{\pi}{3},\frac{\pi}{7}}$, hence in $S$.  Note that the distance of $p$ to the order two points is bigger than $d(p,m)$ or $d(p,l)$. So it's enough to prove that the distances $d(p,l)$ and $d(p,m)$ are strictly bigger than $\rho_T$ to deduce that $B_{\rho_T}(p)$ is embedded in $S$.\\
\begin{figure}[H]
\begin{center}
\includegraphics{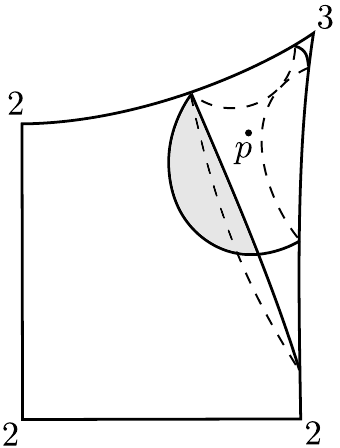}
\caption{The disk in the surface $S$}
\end{center}
\end{figure}
We have:
$$\mu-c>\frac{\log3}{2}-c$$
and $F_{\frac{\pi}{2},\frac{\pi}{3},\frac{\pi}{7}}\left(\frac{\log3}{2}-c\right)<0$, hence $\mu-c>\frac{\log3}{2}-c>\rho_T$. Similarly
$$\lambda-a\geq \arctanh\left(\frac{1}{\sqrt{2}}\right)-a$$
and 
$$F_{\frac{\pi}{2},\frac{\pi}{3},\frac{\pi}{7}}\left(\arctanh\left(\frac{1}{\sqrt{2)}}\right)-a\right)<0,$$
so
$$\lambda-a\geq\arctanh\left(\frac{1}{\sqrt{2}}\right)-a>\rho_T.$$
Since $d(p,l)>\mu-c>\rho_T$ and $d(p,m)>\lambda-a>\rho_T$, $B_{\rho_T}(p)$ is embedded in $S$.

This shows that $r(S)\geq\rho_T$. Actually, since $d(p,l)$ and $d(p,m)$ are strictly bigger than $\rho_T$, we can choose a point $p'$ in a small neighborhood of $p$ such that
$$\left\{\begin{array}{l}
d(p',l)>\rho_T\\
d(p',m)>\rho_T\\
d(p',B)>d(p,B),
\end{array}\right.$$
where $B$ is the cone point of order $3$. Since we are increasing the distance from $B$, with the same argument used for triangular surfaces we get that $r_{p'}>r_p=\rho_T$. So $r(S)>\rho_T$.

\noindent {\it Second special case:} tori with exactly one singular point. We will use the following result \cite{parlier}:

\begin{prop}\label{rhopentagon}
Let $P$ be a right-angled pentagon and $\mathring{P}$ its interior. Then $\mathring{P}$ contains a close disk of radius $\displaystyle{\rho_P=\frac{1}{2}\log\left(\frac{9+4\sqrt{2}}{7}\right)}$.
\end{prop}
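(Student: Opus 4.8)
The plan is to show that the infimum of the inradius over all hyperbolic right-angled pentagons equals $\rho_P$, and that it is approached only in a degenerate limit; every genuine right-angled pentagon then has inradius strictly larger than $\rho_P$ and so contains a closed disk of that radius. I would open with two structural facts. By Gauss--Bonnet, every right-angled pentagon $P$ is convex and has area exactly $3\pi-5\cdot\frac{\pi}{2}=\frac{\pi}{2}$, and the moduli space of such pentagons is two-dimensional. Writing $r=r(P)$ for the inradius and $B_r(O)$ for a maximal inscribed disk, a standard argument shows $B_r(O)$ is tangent to at least three sides, with $O$ in the interior of the convex hull of the tangency points; otherwise the disk could be enlarged or pushed inward. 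The problem thus becomes the minimization of $r$ over the two-parameter family, and the content of the proposition is that this minimum is $\rho_P$.

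The key local tool is the following computation. If a vertex $V$ has both incident sides tangent to $B_r(O)$, at points $T$ and $T'$, then $OTVT'$ is a Lambert quadrilateral with right angles at $T$, $V$ and $T'$. Cutting it along $OV$ produces two congruent right triangles, each with a right angle at $T$ and angle $\frac{\pi}{4}$ at $V$, so hyperbolic trigonometry gives
\[
\cos\!\Big(\tfrac{\pi}{4}\Big)=\cosh(r)\,\sin\!\Big(\tfrac12\angle TOT'\Big),
\qquad\text{equivalently}\qquad
\sin\!\Big(\tfrac12\angle TOT'\Big)=\frac{1}{\sqrt2\,\cosh r}.
\]
Hence every ``activated'' right angle subtends the same central angle, and since these central angles together with the arcs spanning the untouched sides must sum to $2\pi$, the admissible tangency patterns are tightly constrained. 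I would then run the optimization using the standard side relations $\cosh s_i=\sinh s_{i-2}\sinh s_{i+2}$ (indices mod $5$) for the consecutive lengths $s_1,\dots,s_5$. The regular pentagon ($\cosh s_i=\frac{1+\sqrt5}{2}$) is the most symmetric member and has comparatively large inradius, while the infimum is approached at the boundary of moduli space, where the pentagon becomes long and thin: concretely I expect the extremal limit to be $s_2\to 0$ with $s_1,s_3\to\infty$, which forces $\cosh s_2=\sinh s_4\sinh s_5\to 1$, and in the balanced case $\sinh s_4=\sinh s_5=1$, i.e. $s_4=s_5=\arccosh\sqrt2$. The inscribed disk is then squeezed into the bounded ``cap'' of the degenerating figure and its radius tends to the value $\rho_P$ determined by $\sinh\rho_P=\frac{1}{\sqrt7}$, $\cosh\rho_P=\sqrt{8/7}$, that is $\tanh\rho_P=\frac{1}{2\sqrt2}$. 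This is exactly the constant in the statement, since
\[
e^{2\rho_P}=\frac{1+\tanh\rho_P}{1-\tanh\rho_P}=\frac{2\sqrt2+1}{2\sqrt2-1}=\frac{(2\sqrt2+1)^2}{(2\sqrt2)^2-1}=\frac{9+4\sqrt2}{7},
\]
so $\rho_P=\frac12\log\frac{9+4\sqrt2}{7}$. As any honest pentagon lies in the interior of moduli space, the inequality is strict, $r(P)>\rho_P$, and $\mathring P$ contains a closed disk of radius $\rho_P$.

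The main obstacle is the global optimization itself rather than any single identity. One must track which three (or more) sides the maximal disk is tangent to as $P$ is deformed, since this pattern changes across the family and at the transitions the disk touches four or five sides; one must rule out the possibility of an interior critical point of $r$ lying below the boundary value; and one must handle the degenerate limit rigorously, confirming that the infimum is genuinely $\rho_P$ and is not attained. A clean way to organize this is to use the activated-vertex relation above to reduce the problem, on each tangency stratum, to a one-variable trigonometric minimization, and then to compare the finitely many strata and their boundaries; the computations are elementary but lengthy, and correctly bookkeeping the degenerations (and the role of the distinguished length $\arccosh\sqrt2$) is the delicate point. This is precisely the route taken by Parlier \cite{parlier}, whose statement we are quoting, so for the present paper I would simply cite that reference and retain the plan above as the conceptual justification.
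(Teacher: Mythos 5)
The paper does not prove this proposition at all: it is quoted verbatim from \cite{parlier}, so your final fallback --- ``simply cite that reference'' --- is exactly what the author does, and for the purposes of this paper that suffices. The problem is with the sketch you offer as conceptual justification, because it misidentifies the extremal configuration, and that identification is the entire content of the optimization. Your candidate limit (one side $s_2\to 0$ with $\sinh s_4=\sinh s_5=1$, i.e.\ a quadrilateral with three right angles and one ideal vertex) does not realize the infimum. Realizing that quadrilateral in the upper half-plane as $\{0\le x\le\sqrt2\}\cap\{|z|\ge 1\}\cap\{|z-\sqrt2|\ge 1\}$, a short computation with the symmetric centre $\bigl(\tfrac{1}{\sqrt2},t\bigr)$ gives inradius $r$ with $\sinh r=(1+2\sqrt2)^{-1/2}\approx 0.511$, so $r\approx 0.49$, strictly larger than $\rho_P\approx 0.37$. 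The correct extremal degeneration is the one where two \emph{non-adjacent} sides disappear, so that two pairs of adjacent vertices escape to ideal points and the pentagon collapses to a triangle with angles $\tfrac{\pi}{2},0,0$. The inradius of that triangle is exactly $\rho_P$: the paper's own formula $R(0,\vartheta_1,\vartheta_2)=\arctanh\frac{\cos\vartheta_1+\cos\vartheta_2}{2\sqrt{(1+\cos\vartheta_1)(1+\cos\vartheta_2)}}$ (first lemma of Section \ref{triangularsurfaces}) gives $R\bigl(0,0,\tfrac{\pi}{2}\bigr)=\arctanh\frac{1}{2\sqrt2}=\frac12\log\frac{9+4\sqrt2}{7}$. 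Equivalently, your (correct) tangency relation $\cos(\pi/4)=\cosh(r)\sin(\theta/2)$ at an activated right angle, combined with $\sin(\psi/2)=1/\cosh(r)$ at an ideal vertex, turns the angle count for this triangle into $\arcsin\frac{1}{\sqrt2\cosh r}+2\arcsin\frac{1}{\cosh r}=\pi$, whose solution is $\cosh r=\sqrt{8/7}$, $\tanh r=\frac{1}{2\sqrt2}$. So your algebraic verification of the constant is fine, but it is reverse-engineered from the statement rather than derived from the geometry you describe.

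Beyond this, the global part of your plan --- ruling out interior critical points below the boundary value and comparing the tangency strata --- is only announced, not carried out, and you say so yourself. As a standalone proof the proposal is therefore incomplete, and as a heuristic it points at the wrong corner of moduli space. If you keep the sketch, replace the extremal limit by the $(\tfrac{\pi}{2},0,0)$ triangle and note the consistency with the paper's $R(0,\vartheta_1,\vartheta_2)$; otherwise just cite \cite{parlier}, as the paper does.
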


Note that $\rho_P>\rho_T$, as $F_{\frac{\pi}{2},\frac{\pi}{3},\frac{\pi}{7}}(\rho_P)<0$. So again, the idea is to find a right-angled pentagon in any torus $\mathbb{T}$ with a singular point, to deduce that $r(\mathbb{T})>\rho_T$. We proceed as follows.

First of all, we choose a simple closed geodesic on $\mathbb{T}$ and we cut open along it; we obtain a V-piece. Following \cite{dianu}, this can be divided into two isometric pentagons with four right angles and an angle $\frac{\pi}{n}$ or $0$.
\begin{lemma}\label{rightangledpentagon}
Let $P$ be a pentagon with four right angles and an angle $\alpha\in[0,\frac{\pi}{2}]$. Then $P$ contains a right-angled pentagon.
\end{lemma}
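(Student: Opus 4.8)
The plan is to cut off the single non-right corner of $P$ by one geodesic segment, trading the angle $\alpha$ for two new right angles while preserving the other four. If $\alpha=\pi/2$ then $P$ is already right-angled and there is nothing to prove, so assume $\alpha<\pi/2$. Label the vertices $V_1,\dots,V_5$ in cyclic order, with the angle $\alpha$ at $V_1$ and right angles at $V_2,V_3,V_4,V_5$, and write $e_{ij}$ for the side $V_iV_j$. Since every interior angle is at most $\pi/2<\pi$, the pentagon $P$ is convex, so any geodesic segment joining two of its boundary points is contained in $P$.

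First I would consider the geodesic lines $\ell$ and $\ell'$ carrying the sides $e_{51}$ and $e_{23}$, and take their common perpendicular $\delta$, with feet $P_1\in\ell$ and $P_2\in\ell'$. Suppose for the moment that $\delta$ exists and that $P_1$ lies on the side $e_{51}$ (between $V_5$ and $V_1$) and $P_2$ on the side $e_{23}$ (between $V_2$ and $V_3$). Then the five points $V_3,V_4,V_5,P_1,P_2$ bound a pentagon $Q$ whose sides are $e_{34}$, $e_{45}$, the subsegment $V_5P_1$ of $e_{51}$, the segment $\delta$, and the subsegment $P_2V_3$ of $e_{23}$. Its angles are right angles at $V_3$ (between $e_{23}$ and $e_{34}$), at $V_4$ (between $e_{34}$ and $e_{45}$) and at $V_5$ (between $e_{45}$ and $e_{51}$), all inherited from $P$, together with the two right angles at $P_1$ and $P_2$ produced by the common perpendicular. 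Thus $Q$ is a right-angled pentagon, and by convexity $Q\subseteq P$, which is exactly what we want. As a consistency check, $Q$ has area $\pi/2$ while $P$ has area $\pi-\alpha\ge\pi/2$, with equality precisely when $\alpha=\pi/2$.

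The main obstacle is therefore the standing assumption: that $\ell$ and $\ell'$ do not meet (so that $\delta$ exists) and that the feet land on the actual sides rather than on their prolongations. Here the hypothesis $\alpha\le\pi/2$ is decisive, and I would anchor the argument at the degenerate value $\alpha=\pi/2$: there $P$ is right-angled and $e_{12}$ is orthogonal to both $e_{51}$ (at $V_1$) and $e_{23}$ (at $V_2$), so $e_{12}$ is itself the common perpendicular of $\ell$ and $\ell'$ with feet exactly $V_1$ and $V_2$. Letting $\alpha$ decrease, I expect the feet to move monotonically inward, $P_1$ away from $V_1$ toward $V_5$ and $P_2$ away from $V_2$ toward $V_3$; in particular they can never overshoot $V_1$ or $V_2$, the overshoot that would occur precisely when $\alpha>\pi/2$. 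That the feet also never reach $V_5$ or $V_3$ follows from $P$ remaining a nondegenerate convex pentagon of area $\pi-\alpha\ge\pi/2$ throughout $\alpha\in[0,\pi/2]$, the ideal case $\alpha=0$ included.

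The cleanest way to make this rigorous is an intermediate-value argument that establishes existence and location simultaneously. I would slide the foot along $e_{23}$ and watch the angle at which the perpendicular to $e_{23}$, erected at the sliding foot, meets $\ell$: at the foot $V_2$ this perpendicular is the line through $e_{12}$, which meets $\ell$ at $V_1$ at angle $\alpha\le\pi/2$. Showing that this meeting angle exceeds $\pi/2$ for some foot lower on $e_{23}$ then produces, by continuity, an intermediate foot where the two lines are mutually perpendicular; this yields the common perpendicular $\delta$ (proving in passing that $\ell,\ell'$ are ultraparallel) and places its feet on the desired segments. The remaining input is the monotonicity of the meeting angle, which I would verify either directly from the trigonometric relations of the Lambert quadrilateral cut off by $\delta$ or by a short convexity estimate; this is the only genuine computation in the argument.
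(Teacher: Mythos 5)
Your construction is exactly the paper's: cut off the non-right corner by taking the common perpendicular of the two sides adjacent to the side carrying that corner, and pass to the right-angled pentagon bounded by that perpendicular together with the four remaining (partial) sides. The paper simply asserts the existence and placement of this common orthogonal by reference to a figure, so the extra care you take about why the feet land on the actual sides (which you sketch via an intermediate-value argument but do not fully carry out) goes beyond, rather than against, the published proof.
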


\begin{proof}
If $\alpha=\frac{\pi}{2}$, the result is trivial. Let us suppose that $\alpha<\frac{\pi}{2}$.
\begin{figure}[H]
\begin{center}
\includegraphics{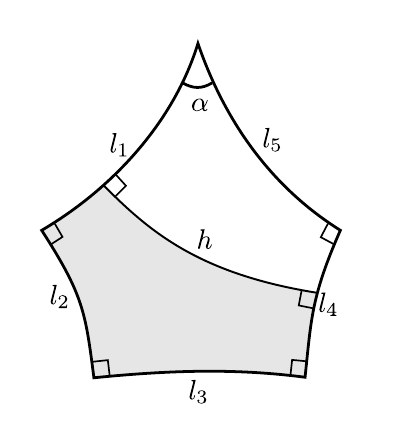}
\caption{The pentagon $\mathcal{P}$ with the embedded right-angled pentagon}
\label{pentagon}
\end{center}
\end{figure}
\noindent Label the sides of the pentagon as in figure \ref{pentagon} and consider $h$, the common orthogonal of $l_1$ and $l_4$. The pentagon bounded by $l_1$, $l_2$, $l_3$, $l_4$ and $h$ is a right-angled pentagon contained in $P$.
\end{proof}

With this result we can get the following.

\begin{lemma}
Let $\mathbb{T}$ be a torus with a singular point. Then $r(\mathbb{T})>\rho_T$.
\end{lemma}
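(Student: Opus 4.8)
The plan is to follow the same strategy used for the special cases in Section \ref{specialcases}: embed a piece known to contain a disk of radius $>\rho_T$. Here the relevant piece is the right-angled pentagon from Proposition \ref{rhopentagon}, which contains a disk of radius $\rho_P>\rho_T$. So it suffices to exhibit an embedded right-angled pentagon in an arbitrary torus $\mathbb{T}$ with one singular point, and conclude $r(\mathbb{T})\geq\rho_P>\rho_T$.

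First I would choose a simple closed geodesic on $\mathbb{T}$ and cut open along it, obtaining a V-piece (a pair of pants with two geodesic boundary curves and the single cone point or cusp). As indicated in the excerpt (following \cite{dianu}), this V-piece divides into two isometric pentagons each having four right angles and one angle $\alpha\in[0,\pi/2]$, where $\alpha$ is half the cone angle (or $0$ if the singular point is a cusp). Then I would invoke Lemma \ref{rightangledpentagon}: each such four-right-angled pentagon contains a genuine right-angled pentagon $P$. Its interior $\mathring{P}$ is embedded in $\mathbb{T}$, and by Proposition \ref{rhopentagon} it contains a closed disk of radius $\rho_P$. Since this disk is embedded in $\mathbb{T}$, we get $r(\mathbb{T})\geq\rho_P$, and since $F_{\frac{\pi}{2},\frac{\pi}{3},\frac{\pi}{7}}(\rho_P)<0$ we have $\rho_P>\rho_T$, hence $r(\mathbb{T})>\rho_T$.

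The one point requiring care is the decomposition of the V-piece into two four-right-angled pentagons. I would verify that a V-piece, being a hyperbolic pair of pants with two geodesic boundary components and one cone point (or cusp) of angle $2\alpha\leq\pi$, admits a reflective symmetry: the common perpendiculars between the boundary geodesics (and between each boundary geodesic and the singular point) cut it into two congruent pieces. Slicing along the shortest geodesic arcs realizing these perpendiculars produces a pentagon with four right angles and one remaining angle equal to $\alpha$ at the singular point, which lies in $[0,\pi/2]$ precisely because the total cone angle is at most $\pi$. This is exactly the hypothesis of Lemma \ref{rightangledpentagon}, so the embedding of the right-angled pentagon goes through.

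I expect the main obstacle to be confirming that this pentagon decomposition is always valid and that its interior embeds in $\mathbb{T}$ without self-overlap after the cutting. In particular one must check that the cutting arcs are simple, disjoint, and contained in the V-piece, so that the resulting right-angled pentagon $\mathring{P}$ truly embeds back in $\mathbb{T}$ rather than merely immersing. Once embeddedness is secured, the chain of inequalities $r(\mathbb{T})\geq\rho_P>\rho_T$ is immediate, completing the last remaining case (tori with one singular point) and thereby the treatment of all non-triangular orbifolds.
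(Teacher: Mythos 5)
Your proposal is correct and follows essentially the same route as the paper: cut $\mathbb{T}$ along a simple closed geodesic to obtain a V-piece, split it (following the cited decomposition) into two isometric pentagons with four right angles and one angle $\alpha\in[0,\frac{\pi}{2}]$, apply Lemma \ref{rightangledpentagon} to extract an embedded right-angled pentagon, and conclude via Proposition \ref{rhopentagon} that $r(\mathbb{T})\geq\rho_P>\rho_T$. The extra care you flag about the embeddedness of the pentagon is reasonable but is exactly what the reference to \cite{dianu} is meant to supply.
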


\begin{proof}
As we described before, we can find in $\mathbb{T}$ a pentagon with four right angles and an angle $\alpha\in[0,\frac{\pi}{2}]$. Via Lemma \ref{rightangledpentagon} this pentagon contains a right-angled pentagon, which contains a disk of radius $$\rho_P>\rho_T$$
by Proposition \ref{rhopentagon}. So $$r(\mathbb{T})\geq \rho_P>\rho_T.$$
\end{proof}

\section{The minimum maximum injectivity radius for orbifolds}

Collecting all the results of the previous sections, we get our main result.

\begin{theor}\label{main}
For every orbifold $S$, the maximum injectivity radius satisfies
$r(S)\geq \rho_T$
and we have equality if and only if $S$ is the triangular surface $S_{\frac{\pi}{2},\frac{\pi}{3},\frac{\pi}{7}}$.
\end{theor}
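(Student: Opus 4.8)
The plan is to assemble Theorem \ref{main} entirely from the case analysis already carried out in the previous sections, since every orbifold $S$ falls into exactly one of the cases treated there. The argument splits according to the signature $(g,n)$ of $S$. First I would dispose of the case $(g,n)=(0,3)$: here $S$ is a triangular orbifold, and Proposition \ref{mintriang} directly gives $r(S)\geq r(S_{\frac{\pi}{2},\frac{\pi}{3},\frac{\pi}{7}})=\rho_T$, with equality exactly when $S=S_{\frac{\pi}{2},\frac{\pi}{3},\frac{\pi}{7}}$. This is the only case producing equality, so the whole content of the ``if and only if'' clause is concentrated here; for every remaining signature I must show the strict inequality $r(S)>\rho_T$.

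Next I would handle all signatures with ``enough topology'' via the Y-piece criterion. By Corollary \ref{withpants}, whenever $g>0$ and $3g+n\geq 5$, or $g=0$ and $n\geq 6$, the orbifold contains an embedded open Y-piece, which by Proposition \ref{pants} (and the computation $\rho_Y=\frac{\log 3}{2}>\rho_T$) forces $r(S)>\rho_T$. The signatures not covered by $(0,3)$ or by Corollary \ref{withpants} are precisely the finitely many exceptional ones: spheres with four or five singular points, i.e.\ $(0,4)$ and $(0,5)$, and tori with one singular point, i.e.\ $(1,1)$. Each of these was already resolved. For most orbifolds of signature $(0,4)$ or $(0,5)$, section \ref{withtriangles} produces an embedded triangle of area at least $\frac{\pi}{4}$, whose inscribed disk has radius exceeding $\rho_T$ by the estimate $\tanh(r(T))\geq\frac12\sin\!\big(\frac12\area(T)\big)$ together with $F_{\frac{\pi}{2},\frac{\pi}{3},\frac{\pi}{7}}<0$ at the relevant value. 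The leftover orbifolds are the two special cases of section \ref{specialcases}: the sphere with three cone points of order $2$ and one of order $3$ (where an embedded copy of $\tilde S_{\frac{\pi}{2},\frac{\pi}{3},\frac{\pi}{7}}$ and a small perturbation of the optimal center give $r(S)>\rho_T$) and the torus with one singular point (where a right-angled pentagon of inradius $\rho_P>\rho_T$ embeds, forcing $r(\mathbb{T})>\rho_T$).

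The verification I would then make explicit is simply that these cases are \emph{exhaustive and disjoint}: a signature $(g,n)$ with $n\geq 1$ and $2g-2+n>0$ (the admissibility/hyperbolicity constraint) either equals $(0,3)$, or satisfies the Corollary \ref{withpants} hypotheses, or lies in the finite exceptional list $\{(0,4),(0,5),(1,1)\}$ handled by sections \ref{withtriangles} and \ref{specialcases}. Running through this bookkeeping, each orbifold receives the bound $r(S)\geq\rho_T$, strict except for $S_{\frac{\pi}{2},\frac{\pi}{3},\frac{\pi}{7}}$.

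I do not expect a genuine obstacle here, since all the analytic and geometric work lives in the earlier propositions. The only real care needed is the combinatorial completeness check of the previous paragraph—ensuring no admissible signature slips through the cracks between Corollary \ref{withpants} and the exceptional list—and a remark that the equality case cannot occur outside $(0,3)$ because every other case yields a \emph{strict} inequality. Thus the proof of Theorem \ref{main} is essentially a one-paragraph synthesis citing Proposition \ref{mintriang}, Corollary \ref{withpants}, and the two subsections of section \ref{otherorbifolds}.
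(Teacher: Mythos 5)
Your proposal is correct and follows essentially the same route as the paper's own proof, which is likewise a short synthesis citing Corollary \ref{withpants} for the large signatures, sections \ref{withtriangles} and \ref{specialcases} for $(0,4)$, $(0,5)$ and $(1,1)$, and Proposition \ref{mintriang} for $(0,3)$. The only quibble is your side remark restricting to $n\geq 1$: closed surfaces of genus $g\geq 2$ with $n=0$ are also orbifolds here, but they satisfy $g>0$ and $3g+n\geq 5$ and so are already covered by Corollary \ref{withpants}.
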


\begin{proof}
Let $S$ be an orbifold of signature $(g,n)$; if it has positive genus and $3g-3\geq 5$ or if its genus is zero and it has at least six singular points, $r(S)>\rho_T$ by Corollary \ref{withpants}. By the results of sections \ref{withtriangles} and \ref{specialcases} the same holds for genus zero or one and $(g,n)\neq(0,3)$.  For $(g,n)=(0,3)$, by Proposition \ref{mintriang} we have $r(S)\geq \rho_T$ with equality if and only if $S=S_{\frac{\pi}{2},\frac{\pi}{3},\frac{\pi}{7}}$.
\end{proof}

Given a hyperbolic surface $S$, we define $\rho(S)$ to be the supremum of all $\rho$ such that there exists $p\in S$ with embedded and pairwise disjoint balls $$\{B_{\rho}(\varphi(p))\}_{\varphi\in \mathrm{Aut}^+(S)}.$$
As a corollary of the previous theorem, we give a sharp lower bound to $\rho(S)$, for $S\in\mathcal{S}$.

\begin{cor}
For every hyperbolic surface $S$, $\rho(S)\geq\rho_T$, with equality if and only if $S$ is a Hurwitz surface.
\end{cor}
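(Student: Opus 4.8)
The plan is to relate the quantity $\rho(S)$ for a surface $S$ directly to the maximum injectivity radius of the quotient orbifold $O := S/\mathrm{Aut}^+(S)$, and then apply Theorem \ref{main}. The key observation is that $\mathrm{Aut}^+(S)$ acts on $S$ by orientation-preserving isometries, so $O$ is an orbifold in the sense defined in Section \ref{preliminaries}. First I would fix a point $p \in S$ whose stabilizer in $\mathrm{Aut}^+(S)$ is trivial (a generic point), so that its orbit $\{\varphi(p)\}_{\varphi \in \mathrm{Aut}^+(S)}$ is in bijection with the group and descends to a single non-singular point $\bar{p}$ in $O$. The balls $\{B_\rho(\varphi(p))\}_{\varphi}$ are pairwise disjoint and embedded in $S$ precisely when the ball $B_\rho(\bar p)$ is embedded in $O$; this is because the quotient map $S \to O$ is a local isometry away from the singular locus, and an embedded ball around $\bar p$ lifts to disjoint embedded balls around the orbit points, and conversely. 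Hence $\rho(S) = r_{\bar p} \le r(O)$, and by choosing $p$ to project to a point realizing the maximum injectivity radius of $O$ we obtain $\rho(S) = r(O)$.

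Once this identification is in place, the inequality follows immediately: by Theorem \ref{main}, $r(O) \ge \rho_T$ for every orbifold $O$, so $\rho(S) \ge \rho_T$. For the equality case, Theorem \ref{main} gives $r(O) = \rho_T$ if and only if $O = S_{\frac{\pi}{2},\frac{\pi}{3},\frac{\pi}{7}}$, the $(2,3,7)$-triangular orbifold. Thus $\rho(S) = \rho_T$ if and only if $S/\mathrm{Aut}^+(S)$ is the $(2,3,7)$-orbifold, which is exactly the defining property of a Hurwitz surface (equivalently, a surface attaining the Hurwitz bound $|\mathrm{Aut}^+(S)| = 84(g-1)$).

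The step I expect to be the main obstacle is establishing the clean equality $\rho(S) = r(O)$ rather than just an inequality, and in particular verifying that the supremum defining $\rho(S)$ is genuinely realized by the maximal embedded disk downstairs. The subtlety is twofold: one must confirm that a generic basepoint with trivial stabilizer exists and that the projection $\bar p$ can be taken to realize $r(O)$ (which is fine since the maximum injectivity radius of the $(2,3,7)$-orbifold is attained at non-singular points, as shown in the analysis of Section \ref{triangularsurfaces}); and one must argue carefully that disjointness and embeddedness of the orbit-indexed family upstairs is equivalent to embeddedness of a single ball in the quotient, handling the behavior near cone points, where the local isometry fails. Since $\bar p$ is non-singular and the maximal embedded disk $B_{r_{\bar p}}(\bar p)$ stays away from the cone points by definition of injectivity radius, this last issue does not actually interfere, and the equivalence goes through. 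The remaining identification of $S/\mathrm{Aut}^+(S) = S_{\frac{\pi}{2},\frac{\pi}{3},\frac{\pi}{7}}$ with the Hurwitz condition is standard and follows from the classical fact that the $(2,3,7)$-triangle group is the orientation-preserving orbifold fundamental group of minimal area.
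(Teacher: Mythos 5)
Your proposal is correct and follows essentially the same route as the paper: both pass to the quotient orbifold $S/\mathrm{Aut}^+(S)$, observe that the orbit balls upstairs are embedded and pairwise disjoint precisely when $\rho\leq r_{\pi(p)}$, choose a preimage of a point realizing $r(S/\mathrm{Aut}^+(S))$, and invoke Theorem \ref{main} together with the characterization of Hurwitz surfaces as those with quotient $S_{\frac{\pi}{2},\frac{\pi}{3},\frac{\pi}{7}}$. The extra care you take about generic basepoints and behavior near cone points is a finer elaboration of the same argument, not a different one.
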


\begin{proof}
For any surface $S$, we can consider the quotient $S/\mathrm{Aut}^+(S)$ and the canonical projection $\pi:S\rightarrow S/\mathrm{Aut}^+(S)$. 
Note that for every $p\in S$, the balls $$\{B_{\rho}(\varphi(p))\}_{\varphi\in \mathrm{Aut}^+(S)}$$ are pairwise disjoint if and only if $\rho\leq r_{\pi(p)}$. To maximize the radius $\rho$, we choose $q\in S/\mathrm{Aut}^+(S)$ such that $r_q=r(S/\mathrm{Aut}^+(S))$ and $p\in \pi^{-1}(q)$. $S$ is a Hurwitz surface if and only if $S/\mathrm{Aut}^+(S)=S_{\frac{\pi}{2},\frac{\pi}{3},\frac{\pi}{7}}$ and the result follows.
\end{proof}

\section*{Appendix}
In this Appendix we use the same techniques of the rest of the article to give a new proof of the following theorem by Yamada:
\begin{theor}{\bf (Yamada \cite{yamada})}
For every hyperbolic surface $S$
$$r(S)\geq \rho_S=\arcsinh(2/\sqrt{3})$$
with equality if and only if $S$ is the thrice-punctured sphere.
\end{theor}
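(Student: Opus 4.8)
The plan is to single out the thrice-punctured sphere as the extremal surface and to treat every other surface by a piece-and-monotonicity argument paralleling Section \ref{triangularsurfaces}. First I would identify the thrice-punctured sphere with the degenerate triangular surface $S_{0,0,0}$, obtained by gluing two ideal triangles, and compute its maximum injectivity radius directly from the system (\ref{sys2}). With $\alpha=\beta=\gamma=0$, symmetry forces $\tilde\alpha=\tilde\beta=\tilde\gamma$ and hence $\tilde\alpha+\vartheta=\frac{2\pi}{3}$; combining $\sin(\tilde\alpha/2)=1/\cosh(r)$ with $\cos\vartheta=1-\frac{1}{2\cosh^2 r}$ gives $\tan(\tilde\alpha/2)=\frac{\sqrt3}{2}$, so that $\cosh(r)=\sqrt{7/3}$ and $r=\arcsinh(2/\sqrt3)=\rho_S$. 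This pins down the constant and shows the thrice-punctured sphere realizes it.

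Next I would reduce an arbitrary surface $S$ to an embedded pair of pants. Since $S$ is a genuine surface its only singular points are cusps, so the only admissible cone-surface of signature $(0,3)$ is the thrice-punctured sphere itself. For every other signature we have $3g-3+n\geq 1$, so by Proposition \ref{pantsdecomposition} a pants decomposition exists and contains at least one curve; cutting along a full decomposition and passing to geodesic representatives via Propositions \ref{geodrepresentatives} and \ref{geodrepresentatives2} exhibits an embedded pair of pants $P\subseteq S$ at least one of whose boundary components is a closed geodesic of positive length. It then suffices to prove $r(P)>\rho_S$, since $r(S)\geq r(P)$.

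The core step, and where I expect the real work, is a monotonicity statement for pairs of pants analogous to Lemma \ref{angles}: the maximum injectivity radius of a pair of pants is a non-decreasing function of its three boundary lengths, with infimum $\rho_S$ attained only in the all-cusps (length-zero) limit. I would establish it with the loop technique of Section \ref{computations}. At a maximal point the embedded disk is self-tangent, producing three length-$2r$ loops that cut $P$ into a central equilateral triangle of angle $\vartheta$ and three corner pieces, one per boundary, subject to the same angle balance $\tilde\alpha+\tilde\beta+\tilde\gamma+3\vartheta=2\pi$. Replacing a corner piece abutting a cusp by one abutting a geodesic of length $\ell>0$ lengthens the corresponding loop, so by the comparison of the defining equations used in Lemma \ref{angles} the positive zero of the resulting length equation strictly increases; hence any boundary of positive length forces $r(P)>\rho_S$.

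The main obstacle is making this monotonicity rigorous: one must set up the corner-piece equation for a geodesic boundary (rather than a cusp or cone point) and verify that the loop length around such a boundary is a strictly increasing function of its length, so that the analogue of the function $F_{\alpha,\beta,\gamma}$ moves in the correct direction. One must also confirm that a maximal disk in $P$ is self-tangent along at least three loops, which the argument of Proposition \ref{3tg} should yield since $P$ carries no cone points of order two, and check that equality in the final bound forces all three boundaries to be cusps and $P=S$, that is, $S$ is the thrice-punctured sphere.
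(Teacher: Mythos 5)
Your opening computation is fine: identifying the thrice-punctured sphere with the degenerate triangular surface $S_{0,0,0}$ and solving the system with $\tilde\alpha=\tilde\beta=\tilde\gamma$ does give $\cosh(r)=\sqrt{7/3}$, i.e. $r=\arcsinh(2/\sqrt{3})=\rho_S$. The proof breaks down, however, at the core step. Your key lemma --- that the maximum injectivity radius of a pair of pants is non-decreasing in its boundary lengths, with infimum $\rho_S$ attained only in the all-cusps limit --- is false. For a pair of pants $P$ with boundary, the radius of a disk embedded \emph{in $P$} centred at $p$ is bounded above by $d(p,\partial P)$, a constraint that simply does not exist for the boundaryless thrice-punctured sphere and that your loop system never sees. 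For a Y-piece whose three boundary geodesics have length $L\to\infty$, the two right-angled hexagons degenerate (their seams shrink to length $0$) and every point of the Y-piece ends up within distance roughly $\arctanh(1/2)=\tfrac{1}{2}\log 3\approx 0.549$ of the boundary --- this is exactly why Parlier's sharp constant $\rho_Y=\tfrac{1}{2}\log 3$ is what it is. Since $\rho_Y<\rho_S\approx 0.987$, such a pair of pants satisfies $r(P)<\rho_S$, so the inequality $r(S)\geq r(P)$ gives nothing. The monotonicity you extrapolate from Lemma \ref{angles} is genuinely only about the loop lengths; once a boundary is present, $r(P)$ first rises above $\rho_S$ for short boundaries and then falls below it, and a maximal disk in $P$ may be tangent to $\partial P$ rather than self-tangent, so the analogue of Proposition \ref{3tg} also fails. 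One could try to repair this by controlling the lengths of the pants curves (e.g.\ via a Bers-type bound), but you have not done so, and the once-punctured torus case would still need separate treatment since its single pants curve is glued to itself.

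This is also why the paper does not localize to an embedded pair of pants. Its proof works at a global maximum point $p$ of $S$ itself: the maximal disk in $S$ is self-tangent, producing loops of length $2r_p$ whose geodesic representatives are analysed case by case (two intersecting representatives giving a three- or four-holed sphere, versus disjoint representatives giving a one-holed torus, the latter handled with the Collar Lemma). The loops there are length-minimizing in all of $S$, not merely in a subsurface, so no distance-to-boundary term ever enters. If you want to keep your strategy, the honest fix is to prove a quantitative statement of the form: at the maximal point of $S$, either all the relevant geodesic representatives are shorter than some explicit bound (so the loop system forces $r_p>\rho_S$), or one of them is long enough that a different mechanism (collars, or an embedded large triangle) produces a disk of radius exceeding $\rho_S$. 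That is essentially what the paper's case analysis accomplishes.
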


Note that another proof of this theorem has been given by Gendulphe in \cite{gendulphe}.

\begin{rmk}
We will use the Collar Lemma (cf.\ \cite{buser}) and in particular the fact that if two simple closed geodesics $\alpha$ and $\beta$ intersect $n$ times, then
$$\alpha\geq 2 n w(\beta)$$
where $w(\beta)=\arcsinh\left(\frac{1}{\sinh(\beta/2)}\right)$ is half the width of the collar of $\beta$.
\end{rmk}

\begin{proof}
Fix $p\in S$ with $r_p=r(S)$; the disk $B_{r_p}(p)$ determines at least two loops based at $p$ of length $2r_p$. If there are exactly two loops of length $2r_p$ such that either at least one is homotopic to a cusp, or their geodesic representatives do not intersect, we are in the situation of Figure \ref{2loops}, where $\alpha$, $\beta$ or $\gamma$ can be cusps:
\begin{figure}[H]
\begin{center}
\includegraphics{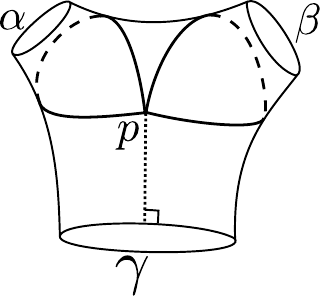}
\caption{The point $p$ with the two loops}\label{2loops}
\end{center}
\end{figure}
By choosing a point $q$ in a small enough neighborhood of $p$ on the orthogonal to the third curve, if $\gamma$ is not a cusps, or on the geodesic from $p$ to $\gamma$, if $\gamma$ is a cusp, we increase the lengths of the two loops around $\alpha$ and $\beta$ and by continuity all other loops are still longer. So $r_q>r_p$, a contradiction.

We have then two possibilities:
\begin{enumerate}[(a)]
\item there are two loops of length $2r_p$ whose geodesic representatives intersect each other once, or
\item there are at least three loops such that, if two are not homotopic to cusps, their geodesic representatives do not intersect.
\end{enumerate}

{\bf Case (a):} consider three loops of length $2r_p$; they determine a $3$-- or a $4$--holed sphere.

If there exists three loops determining a $3$--holed sphere, we can write equations for $p$. Denote by $\alpha$, $\beta$ and $\gamma$ the three boundary curves or cusps, by $\tilde{\alpha}$, $\tilde{\beta}$ and $\tilde{\gamma}$ the angles of the three loops at $p$ and by $\theta$ the angle of the (equilateral) triangle whose sides are the three loops (see Figure \ref{3loops}).
\begin{figure}[h]
\begin{center}
\includegraphics{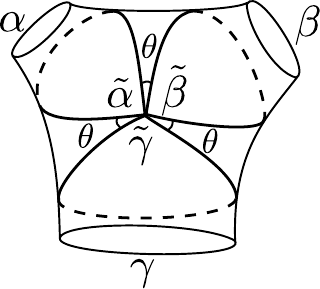}
\caption{Three loops determining a Y-piece}\label{3loops}
\end{center}
\end{figure}
We have:
\begin{equation}\label{system}
\left\{\begin{array}{l}
\cosh\left(\frac{\alpha}{2}\right)=\cosh(r_p)\sin\left(\frac{\tilde{\alpha}}{2}\right)\\
\cosh\left(\frac{\beta}{2}\right)=\cosh(r_p)\sin\left(\frac{\tilde{\beta}}{2}\right)\\
\cosh\left(\frac{\gamma}{2}\right)=\cosh(r_p)\sin\left(\frac{\tilde{\gamma}}{2}\right)\\
\cos(\theta)=1-\frac{1}{2\cosh^2(r_p)}\\
\tilde{\alpha}+\tilde{\beta}+\tilde{\gamma}+3\theta=2\pi
\end{array}\right.
\end{equation}
Using $r_p<\rho_S$, we get
$$\tilde{\alpha},\tilde{b},\tilde{\gamma}>2\arcsin\left(\sqrt{\frac{3}{7}}\right)$$
and
$$\theta>\arccos\left(\frac{11}{14}\right),$$
so 
$$\tilde{\alpha}+\tilde{\beta}+\tilde{\gamma}+3\theta>2\pi$$
which is a contradiction. Moreover, $r_p=\rho_S$ is a solution if and only if $\alpha=\beta=\gamma=0$, i.e. if we are on a thrice-punctured sphere. So in this case, $r(S)\geq\rho_S$, with equality if and only if $S$ is a thrice-punctured sphere.

If no three loops determine a $3$--holed sphere, fix three loops (with corresponding geodesic representatives or cusps $\alpha$, $\beta$ and $\gamma$) and the associated four-holed sphere (denote by $\delta$ the fourth boundary curve or cusp). The loop based at $p$ and homotopic to $\delta$ has length at least $2r_p$. We can again write down equations satisfied by the pieces we obtain by cutting the four-holed sphere along the loops. 
\begin{figure}[H]
\begin{center}
\includegraphics{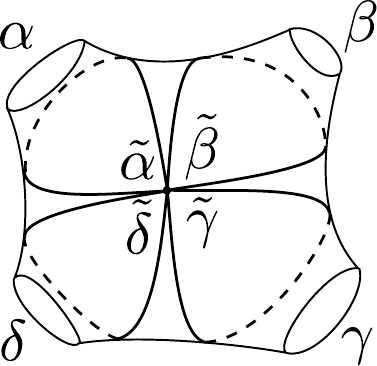}
\caption{Four loops in a four-holed sphere}
\end{center}
\end{figure}
If we assume that $r_p\leq\rho_S$, we get (similarly to before)
$$\tilde{\alpha},\tilde{\beta},\tilde{\gamma}\geq 2\arcsin\left(\sqrt{\frac{3}{7}}\right).$$
Consider the quadrilateral with the four loops as sides; three sides have the same length $2r_p$ and the fourth has length at least $2r_p$. The two diagonals of the quadrilateral are longer than $2r_p$, otherwise we have three loops of length $2r_p$ determining a $3$--holed sphere. Let's denote the angles as in Figure \ref{quadrilateral}.
\begin{figure}[H]
\begin{center}
\includegraphics{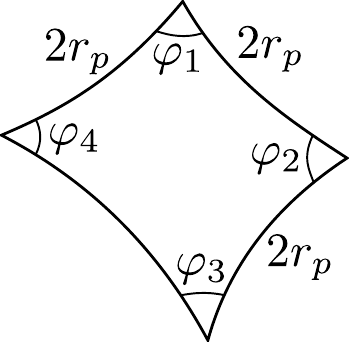}
\caption{The quadrilateral with the four loops as sides}\label{quadrilateral}
\end{center}
\end{figure}
By hyperbolic trigonometry we get
$$\varphi_1,\varphi_2 >2\arcsin\left(\frac{\sqrt{3}}{2\sqrt{7}}\right)$$
and
$$\varphi_3,\varphi_4 >\arcsin\left(\frac{\sqrt{3}}{2\sqrt{7}}\right).$$
So
$$\tilde{\alpha}+\tilde{b}+\tilde{\gamma}+\tilde{\delta}+2\theta+2\varphi>2\pi,$$
a contradiction. Thus $r(S)>\rho_S$.

{\bf Case (b):} assume $r_p\leq \rho_S$. Consider the one-holed torus determined by the geodesic representatives $\alpha$ and $\beta$ of the two loops of length $2r_p$. Denote its boundary curve or cusp by $\gamma$. Since $\alpha,\beta\leq 2\rho_S$, by the Collar Lemma we have $\alpha,\beta\geq 2\arcsinh\left(\frac{\sqrt{3}}{2}\right)$.

Cut along $\alpha$ and denote by $d$ the shortest path between the two copies of $\alpha$. We have (again, by the Collar Lemma) $2\arcsinh\left(\frac{\sqrt{3}}{2}\right)\leq d\leq \beta\leq 2\rho_S$. If $\gamma$ is not a cusp, then
\begin{gather*}
\cosh(\gamma/2)=\sinh^2(\alpha/2)\cosh(d)-\cosh^2(\alpha/2)=\\
=\sinh^2(\alpha/2)(\cosh(d)-1)+1.
\end{gather*}
So $\frac{17}{8}\leq\cosh(\gamma)\leq \frac{41}{9}$ and the width $w(\gamma)$ of the collar around $\gamma$ satisfies 
$$w(\gamma)>\log(5/4)>\log(2\sqrt{3}).$$

\begin{rmk}\label{distcollar}
By hyperbolic trigonometry, if a point $p$ has distance at least $\log(2/\sqrt{3})$ from the collar around a curve $\alpha<2\rho_S$, then the loop based at $p$ and homotopic to $\alpha$ is at least $2\rho_S$.
\end{rmk}

If $\gamma>2\rho_S$, fix $q\in \gamma$. Consider a loop based at $q$ of length $2r_q$ and its geodesic representative $\delta$. There are three possibilities:
\begin{itemize}
\item if $\delta\cap\gamma=\emptyset$, by Remark \ref{distcollar} we get $2r_q=\delta>2\rho_S$;
\item if $\delta\cap\gamma\neq\emptyset$ and $\delta\neq\gamma$, then $\delta$ crosses the one-holed torus, so it crosses $\alpha$ or $\beta$ at least once and $\gamma$ at least twice. Thus $$2r_q=\delta\geq 2\arcsinh(\sqrt{3}/2)+4\log(5/4)>2\rho_S;$$
\item if $\delta=\gamma$, then $r_q>\rho_S$.
\end{itemize}
In all cases, $r_q>\rho_S>r_q$, a contradiction.

Suppose then that $\gamma$ is a curve with $\gamma\leq 2\rho_S$. One can show that in these conditions there exists a solution to the system (\ref{system}), determining a point $q$ with loops of length $\ell>2\arccosh(\frac{\sqrt{37}}{3})>2\rho_S$. Moreover
 $\cosh(d(q,\alpha))=\frac{\sinh(\ell/2)}{\sinh(\alpha/2)}>\rho_S$. So there exists $r>\rho_S$ such that all loops based at $q$ have length at least $2r$, thus again $r(S)>r_p$, a contradiction.
 
If $\gamma$ is a cusp, cut along $\alpha$ and consider a point $q$ which is equidistant from the two copies of $\alpha$ and at distance $\log(2/\sqrt{3})+\varepsilon$ (for $\varepsilon>0$ small) from the horoball of area $2$. By explicit computations, $r_q>\rho_S>r_p$, contradiction again.

So in case (b), $r(S)>\rho_S$.
\end{proof}

\bigskip
\bibliographystyle{plain}
\bibliography{references}

\begin{thebibliography}{10}

\bibitem{bavard}
Christophe Bavard.
\newblock Disques extr\'emaux et surfaces modulaires.
\newblock {\em Ann. Fac. Sci. Toulouse Math. (6)}, 5(2):191--202, 1996.

\bibitem{beardon}
Alan~F. Beardon.
\newblock {\em The geometry of discrete groups}, volume~91 of {\em Graduate
  Texts in Mathematics}.
\newblock Springer-Verlag, New York, 1983.

\bibitem{buser}
Peter Buser.
\newblock {\em Geometry and spectra of compact {R}iemann surfaces}, volume 106
  of {\em Progress in Mathematics}.
\newblock Birkh\"auser Boston Inc., Boston, MA, 1992.

\bibitem{deblois}
J.~{DeBlois}.
\newblock {The centered dual and the maximal injectivity radius of hyperbolic
  surfaces}.
\newblock {\em ArXiv e-prints}, August 2013.

\bibitem{dianu}
Rares Dianu.
\newblock {\em Sur le spectre des tores point\'es}.
\newblock PhD thesis. Ecole Polytechnique F\'ed\'erale de Lausanne, 2000.

\bibitem{drydenparlier}
Emily~B. Dryden and Hugo Parlier.
\newblock Collars and partitions of hyperbolic cone-surfaces.
\newblock {\em Geom. Dedicata}, 127:139--149, 2007.

\bibitem{farbmargalit}
Benson Farb and Dan Margalit.
\newblock {\em A primer on mapping class groups}, volume~49 of {\em Princeton
  Mathematical Series}.
\newblock Princeton University Press, Princeton, NJ, 2012.

\bibitem{gendulphe}
M.~{Gendulphe}.
\newblock {Trois applications du lemme de Schwarz aux surfaces hyperboliques}.
\newblock {\em ArXiv e-prints}, April 2014.

\bibitem{hatcherthurston}
A.~Hatcher and W.~Thurston.
\newblock A presentation for the mapping class group of a closed orientable
  surface.
\newblock {\em Topology}, 19(3):221--237, 1980.

\bibitem{parlier}
Hugo Parlier.
\newblock Hyperbolic polygons and simple closed geodesics.
\newblock {\em Enseign. Math. (2)}, 52(3-4):295--317, 2006.

\bibitem{tanwongzhang}
Ser~Peow Tan, Yan~Loi Wong, and Ying Zhang.
\newblock Generalizations of {M}c{S}hane's identity to hyperbolic
  cone-surfaces.
\newblock {\em J. Differential Geom.}, 72(1):73--112, 2006.

\bibitem{yamada}
Akira Yamada.
\newblock On {M}arden's universal constant of {F}uchsian groups. {II}.
\newblock {\em J. Analyse Math.}, 41:234--248, 1982.

\end{thebibliography}
\end{document}